\newcommand{\mtrx}[1]{\left (\begin{matrix}#1\end{matrix}\right)}
\definecolor{darkred}{rgb}{1,0,0} 
\definecolor{darkgreen}{rgb}{0,0.8,0}
\definecolor{darkblue}{rgb}{0,0,1}
\title[Maximal representations into $\SO_0(2,n)$]{The geometry of maximal representations of surface groups into $\SO_0(2,n)$}
\author{Brian Collier}
\address{Department of Mathematics \\
University of Maryland \\
4176 Campus Drive \\
College Park, MD 20742}
\email{briancollier01@gmail.com}
\author{Nicolas Tholozan}
\address{
DMA, \'Ecole Normale Sup\'erieure \\
CNRS -- PSL Research University \\
45 rue d'Ulm\\
75230 Paris Cedex 5 \\
France}
\email{nicolas.tholozan@ens.fr}
\author{J\'{e}r\'{e}my Toulisse}
\address{
Department of Mathematics \\
Universit\'e C\^{o}te d’Azur \\
LJAD, Nice F-06000\\France}
\email{ jtoulisse@unice.fr }
\date{\today}
\subjclass[2010]{Primary 20H10, 58E12, 57M50; Secondary 53C50, 14H60, 22E40}
\begin{document}
\begin{abstract}
In this paper, we study the geometric and dynamical properties of maximal representations of surface groups into Hermitian Lie groups of rank $2$. Combining tools from Higgs bundle theory, the theory of Anosov representations, and pseudo-Riemannian geometry, we obtain various results of interest.

We prove that these representations are holonomies of certain geometric structures, recovering results of Guichard and Wienhard. We also prove that their length spectrum is uniformly bigger than that of a suitably chosen \emph{Fuchsian} representation, extending a previous work of the second author. Finally, we show that these representations preserve a unique minimal surface in the symmetric space, extending a theorem of Labourie for Hitchin representations in rank $2$.
\end{abstract}
\maketitle
\tableofcontents

\section{Introduction}

In the past decades, two major theories have allowed various breakthroughs in the understanding of surface group representations into semi-simple Lie groups, leading to what is sometimes called \emph{higher Teichm\"uller theory}. 

On one side, non-abelian Hodge theory gives a bijective correspondence between conjugacy classes of representations of the fundamental group of a closed Riemann surface into a semi-simple Lie group and holomorphic objects on the Riemann surface called \emph{Higgs bundles}. This theory, developed by Hitchin, Simpson, Corlette and many others, has proven very useful in describing the topology of representation varieties of surface groups (see \cite{hitchinselfduality}, \cite{hitchin} or \cite{gothen}). 

On the other side, Labourie showed in \cite{labouriehyperconvex} that many surface group representations share a certain dynamical property called the \emph{Anosov} property. 
This property has strong geometric and dynamical implications similar to the \emph{quasi-Fuchsian} property for surface group representations into $\PSL(2,\C)$.

A recent trend in the field is to try to link these two approaches to higher Teichm\"uller theory (see for instance \cite{AlessandriniLi,BaragliaThesis,CollierLi,LabourieWentworth}) by extending previous works which highlighted the importance of harmonic maps in Teichm\"uller theory (see for instance \cite{WolfHarmonicTeichmuller}).
Currently, such links are far from being fully understood. 
For example, there is no known Higgs bundle characterization of Anosov representations.
 The main obstacle is that finding the representation associated to a given Higgs bundle involves solving a highly transcendental system of PDEs called the \emph{self-duality equations}.

However, in some cases the self-duality equations simplify, and one can hope to reach a reasonably good understanding of their solutions. These simplifications happen when the Higgs bundle is \emph{cyclic}. Unfortunately, not every Higgs bundle is cyclic. Nevertheless, it turns out that restricting to cyclic Higgs bundles is enough to study representations into most Lie groups of real rank $2$. This was used by Labourie \cite{labouriecyclic} to study Hitchin representations into $\PSL(3,\R)$, $\PSp(4,\R)$ and $\mathrm{G}_2$, and by the first author and Alessandrini \cite{colliercyclic,PSp4maximalRepsAC} to study maximal representations into $\PSp(4,\R)$.

The goal of this paper is to derive from Higgs bundle theory several geometric properties of representations of surface groups into Hermitian Lie groups of rank $2$. According to the work of Burger, Iozzi and Wienhard \cite{burgeriozziwienhard}, it is enough to restrict to representations into the Lie groups $\SO_0(2,n+1)$, $n\geq 1$ (see Remark \ref{rmk:LieGroupsRank2}).

\subsection*{Geometrization of maximal representations}
Hitchin representations into split real Lie groups \cite{labouriehyperconvex} and maximal representations into Hermitian Lie groups \cite{BILW} are two important families of Anosov representations.
One nice feature of Anosov representations is that they are holonomies of certain geometric structures on closed manifolds. More precisely, for every Anosov representation $\rho$ of a hyperbolic group $\Gamma$ into a semi-simple Lie group $G$, Guichard and Wienhard \cite{wienhardanosov} construct a $\rho$-invariant open domain $\Omega$ in a certain \emph{flag manifold} $G/P$ on which $\rho(\Gamma)$ acts properly discontinuously and co-compactly. 

In our setting, their result can be reformulated as follows. Let $\R^{2,n+1}$ denote the vector space $\R^{n+3}$ equipped with the quadratic form
\[\mathbf{q}(\mathbf{x}) = x_1^2+x_2^2 - x_3^2 - \ldots - x_{n+3}^2~.\]
We denote by $\Ein^{1,n}$ the space of isotropic lines in $\R^{2,n+1}$ and by  $\Pho(\R^{2,n+1})$ the space of \emph{photons} in $\Ein^{1,n}$ or, equivalently, of totally isotropic planes in $\R^{2,n+1}$. By Witt's theorem, $\SO_0(2,n+1)$ acts transitively on both $\Ein^{1,n}$ and $\Pho(\R^{2,n+1})$.

\begin{theo}[Guichard--Wienhard \cite{wienhardanosov}] \label{t:GuicharWienhard}
Let $\Gamma$ be the fundamental group of a closed oriented surface $\Sigma$ of genus at least two. If $\rho: \Gamma \to \SO_0(2,n+1)$ is a maximal representation $(n\geq 2)$, then there exists a non-empty open domain $\Omega_\rho$ in $\Pho(\R^{2,n+1})$ on which $\Gamma$ acts properly discontinuously and co-compactly via $\rho$.
\end{theo}

In particular, the representation $\rho$ is the holonomy of a \emph{photon structure} on the closed manifold $\rho(\Gamma) \backslash \Omega_\rho$ (see Definition \ref{d:FiberedPhotonStructure}). One drawback of the construction of Guichard--Wienhard is that, a priori, it gives neither the topology of the domain $\Omega_\rho$ nor the topology of its quotient by $\rho(\Gamma)$. In forthcoming work \cite{guichardwienhardtopology}, a clever -- but indirect -- argument is used to describe this topology in the case of maximal representations into the symplectic group $\Sp(2n,\R)$. In an earlier paper \cite{guichardwienhardsl4}, they focus on Hitchin representations into $\SO_0(2,3)$\footnote{To be more accurate, Guichard and Wienhard study Hitchin representations into $\PSL(4,\R)$ and, in particular, into $\PSp(4,\R)$, and their action on the projective space $\ProjR{3}$. By a low dimensional isomorphism, $\PSp(4,\R)$ is isomorphic to $\SO_0(2,3)$ and $\ProjR{3}$ identifies (as a $\PSp(4,\R)$-homogeneous space) with $\Pho(\R^{2,3})$.} and give a more explicit parametrization of (the two connected components of) $\Omega_\rho$ by triples of distinct points in $\ProjR{1}$, thus identifying $\rho(\Gamma) \backslash \Omega_\rho$ with the unit tangent bundle of $\Sigma$. In this parametrization, however, the circle bundle structure of the manifold is not apparent.

Here, we will construct photon structures on certain fiber bundles over $\Sigma$ whose holonomy is any prescribed maximal representation into $\SO_0(2,n+1)$ and in such a way that the fibers are ``geometric''. We will show that these photon structures coincide with the Guichard--Wienhard structures, and thus describe the topology of Guichard--Wienhard's manifolds in this setting.

\begin{MonThm}\label{t:photontructures}
Let $\Gamma$ be the fundamental group of a closed oriented surface $\Sigma$ of genus at least two. If $\rho: \Gamma \to \SO_0(2,n+1)$ is a maximal representation $(n\geq 2)$, then there exists a fiber bundle $\pi : M \to \Sigma$ with fibers diffeomorphic to $\mathrm{O}(n)/\mathrm{O}(n-2)$, and a $\Pho(\R^{2,n+1})$-structure on $M$ with holonomy $\rho \circ \pi_*$. Moreover, the developing map of this photon structure induces an isomorphism from each fiber of $\pi$ to a copy of $\Pho(\R^{2,n}) \subset \Pho(\R^{2,n+1})$.

Conversely, if $\pi : M \to \Sigma$ is a fiber bundle with fibers diffeomorphic to $\mathrm{O}(n)/\mathrm{O}(n-2)$, then any photon structure on $M$ whose developing map induces an isomorphism from each fiber of $\pi$ to a copy of $\Pho(\R^{2,n}) \subset \Pho(\R^{2,n+1})$ has holonomy $\rho\circ \pi_*$, where $\rho:\Gamma \to \SO_0(2,n+1)$ is a maximal representation.
\end{MonThm}

\begin{MonCoro}
The manifold $\rho(\Gamma) \backslash \Omega_\rho$ in Guichard--Wienhard's Theorem \ref{t:GuicharWienhard} is diffeomorphic to a $\mathrm{O}(n)/\mathrm{O}(n-2)$-bundle over $\Sigma$.
\end{MonCoro}

\begin{rmk}
The proof of Theorem \ref{t:photontructures} in Section \ref{s:Geometrization} gives additional information on the topology of the fiber bundle $M$, which depends on certain topological invariants of the representation $\rho$. 
\end{rmk}

Hitchin representations into $\SO_0(2,3)$ are the special class of maximal representations that also have a Guichard--Wienhard domain of discontinuity in $\Ein^{1,2}$. In a manner similar to \cite{guichardwienhardsl4}, this domain can be parametrized by triples of distinct points in $\ProjR{1}$ so that its quotient by $\rho(\Gamma)$ is homeomorphic to the unit tangent bundle to $\Sigma$. Here, we recover this $\Ein^{1,2}$-structure (referred to as a conformally flat Lorentz structure) on the unit tangent bundle to $\Sigma$ in such a way that the fibers are ``geometric'':

\begin{MonThm}\label{t:einsteinstructures}
Let $\Gamma$ be the fundamental group of a closed oriented surface $\Sigma$ of genus at least two. Let $T^1\Sigma$ denote the unit tangent bundle to $\Sigma$ and $\pi : T^1\Sigma \to \Sigma$ the bundle projection. If $\rho: \Gamma \to \SO_0(2,3)$ is a Hitchin representation, then there exists a $\Ein^{1,2}$-structure on $T^1\Sigma$ with holonomy $\rho \circ \pi_*$. Moreover, the developing map of this $\Ein^{1,2}$-structure induces an isomorphism from each fiber of $\pi$ to a copy of $\Ein^{1,0} \subset \Ein^{1,2}$.
\end{MonThm}

For the group $\SO_0(2,2),$ Alessandrini and Li \cite{AlessandriniLi} used Higgs bundle techniques to construct anti-de Sitter structures on circle bundles over $\Sigma$, recovering a result of Salein and Gu\'eritaud and Kassel \cite{Salein,GueritaudKassel}.

\subsection*{Length spectrum of maximal representations in rank $2$}

Some Anosov representations of surface groups, such as Hitchin representations into real split Lie groups or maximal representations into Hermitian Lie groups, have the additional property of forming connected components of the whole space of representations. There have been several attempts to propose a unifying characterization of these representations (see \cite{MartoneZhang} and \cite{guichardwienhardpositivity}). Note that quasi-Fuchsian representations into $\PSL(2,\C)$  do not form components; indeed, they can be continuously deformed into representations with non-discrete image.

The property of lying in a connected component consisting entirely of Anosov representations seems to be related to certain geometric controls of the representation ``from below'' such as an upper bound on the entropy or a \emph{collar lemma}. To be more precise, let us introduce the \emph{length spectrum} of a representation.

\begin{defi} \label{d:LengthSpectrumIntro}
Let $\rho$ be a representation of $\Gamma$ into $\SL(n,\R)$, $n\geq 2$. Let $[\Gamma]$ denote the set of conjugacy classes in $\Gamma$. The \emph{length spectrum} of $\rho$ is the function
\[\function{L_\rho}{[\Gamma]}{\R_+}{\gamma}{\frac{1}{2} \log\left|\frac{\lambda_1(\rho(\gamma))}{\lambda_n(\rho(\gamma))}\right |~,}\]
where $\lambda_1(A)$ and $\lambda_n(A)$ denote the complex eigenvalues of $A$ with highest and lowest modulus respectively.
\end{defi}

\begin{rmk}
Since the eigenvalues of matrices in $\SO_0(2,n+1) \subset \SL(n+3,\R)$ are preserved by the involution $A \mapsto A^{-1}$, the above definition simplifies to
\[L_\rho(\gamma) = \log |\lambda_1(\gamma)|\]
for representations into $\SO_0(2,n+1)$.
\end{rmk}

The length spectrum of a representation captures many of its algebraic, geometric and dynamical properties. Several results suggest that the length spectra of Hitchin and maximal representations are somehow always ``bigger'' than that of a Fuchsian representation. The first of these results deals with the ``average behavior'' of the length spectrum.

\begin{defi}
Let $\rho$ be a representation of $\Gamma$ into $\SL(n,\R)$. The \emph{entropy} of $\rho$ is the number
\[h(\rho) = \limsup_{R\to +\infty} \frac{1}{R} \log \sharp \{\gamma \in [\Gamma] \mid L_\rho(\gamma) \leq R\}~.\]
\end{defi}

\begin{theo}[Potrie--Sambarino \cite{PotrieSambarino}]
If $\rho: \Gamma \to \SL(n,\R)$ is a Hitchin representation, then
\[h(\rho) \leq \frac{2}{n-1}~,\]
with equality if and only if $\rho$ is conjugate to $m_{irr} \circ j$, where $j:\Gamma \to \SL(2,\R)$ is a Fuchsian representation and $m_{irr} : \SL(2,\R) \to \SL(n,\R)$ is the irreducible representation.
\end{theo}

Another ``geometric control'' on Hitchin representations is a generalization of the classical \emph{collar lemma} for Fuchsian representations. It roughly says that, if $\gamma$ and $\eta$ are two essentially intersecting curves on $\Sigma,$ then $L_\rho(\gamma)$ and $L_\rho(\eta)$ cannot both be small.
Such a collar lemma was obtained by Lee and Zhang for Hitchin representations into $\SL(n,\R)$ \cite{LeeZhang} and by Burger and Pozzetti \cite{BurgerPozzetti} for maximal representations into $\Sp(2n,\R)$. More precisely, they prove:

\begin{theo}
There exists a constant $C$ such that, for any $\gamma$ and $\eta$ in $[\Gamma]$ represented by essentially intersecting curves on $\Sigma$ and for any Hitchin (resp. maximal) representation $\rho$ of $\Gamma$ into $\SL(n,\R)$ (resp. $\Sp(2n,\R)$), one has
\[\left(e^{L_\rho(\gamma)} -1\right)\cdot \left(e^{L_\rho(\eta)} -1\right) \geq C~.\]
\end{theo}

Motivated by a question of Zhang, the second author proved a stronger statement for Hitchin representations into $\SL(3,\R)$ which implies both results above:

\begin{theo}[Tholozan, \cite{TholozanConvex}]
If $\rho: \Gamma \to \SL(3,\R)$ is a Hitchin representation, then there exists a Fuchsian representation $j: \Gamma \to \SL(2,\R)$ such that
\[L_\rho \geq L_{m_{irr} \circ j}~.\]
\end{theo}

We will prove a similar statement for maximal representations into $\SO_0(2,n+1)$. A maximal representation $\rho: \Gamma \to \SO_0(2,n+1)$ is said to be \textit{in the Fuchsian locus} if $\rho(\Gamma)$ preserves a copy of $\R^{2,1}$ in $\R^{2,n+1}$ (see Definition \ref{d:FuchsianLocus}). 

\begin{MonThm}\label{t:domination}
Let $\Gamma$ be the fundamental group of a closed oriented surface $\Sigma$ of genus at least two. If $\rho: \Gamma \to \SO_0(2,n+1)$ is a maximal representation $(n\geq 0)$, then either $\rho$ is in the Fuchsian locus, or there exists a Fuchsian representation $j: \Gamma \to \SO_0(2,1)$ and a $\lambda >1$ such that
\[L_\rho \geq \lambda L_j~.\]
\end{MonThm}

As a direct consequence of the fact that Fuchsian representations into $\SO_0(2,1)$ have entropy $1$, we obtain the following:
\begin{MonCoro}
Let $\Gamma$ be the fundamental group of a closed oriented surface $\Sigma$ of genus at least two. If $\rho: \Gamma \to \SO_0(2,n+1)$ is a maximal representation $(n\geq 0)$, then the entropy $h(\rho)$ satisfies
\[h(\rho) \leq 1\]
with equality if and only if $\rho$ is in the Fuchsian locus.
\end{MonCoro}

 As a direct consequence of Theorem \ref{t:domination} and Keen's collar lemma \cite{KeenCollarLemma}, we can also deduce a sharp collar lemma for maximal representations into $\SO_0(2,n+1)$:
\begin{MonCoro}
Let $\Gamma$ be the fundamental group of a closed oriented surface $\Sigma$ of genus at least two and $\rho:\Gamma\to\SO_0(2,n+1)$ be a maximal representation. If $\gamma$ and $\eta$ are two elements in $[\Gamma]$ represented by essentially intersecting curves on $\Sigma$, then 
\[\sinh\left(\frac{L_\rho(\gamma)}{2}\right)\cdot \sinh\left(\frac{L_\rho(\eta)}{2}\right) > 1~.\]
\end{MonCoro}

\subsection*{Labourie's conjecture for maximal representations in rank $2$}

A drawback of non-abelian Hodge theory is that it parameterizes representations of a surface group in a way that depends on the choice of a complex structure on the surface. In particular, such parameterizations do not have a natural action of the mapping class group of $\Sigma.$ One would overcome this issue by finding a canonical way to associate a complex structure on the surface to a given surface group representation. To this intent, Labourie \cite{labourieenergy} suggested the following approach.

Let $\Teich(\Sigma)$ denote the Teichm\"uller space of marked complex structures on $\Sigma$. For each reductive representation $\rho$ of $\Gamma$ into a semi-simple Lie group $\mathrm{G},$ one can associate a function on $\Teich(\Sigma)$ called the \emph{energy function}.

\begin{defi}
The {\em energy function} $\E_\rho$ is the function that associates to a complex structure $J$ on $\Sigma$ the energy of the $\rho$-equivariant harmonic map from $(\widetilde{\Sigma}, J)$ to the Riemannian symmetric space $G/K$.
\end{defi}
The existence of such an equivariant harmonic map was proven by Corlette \cite{corlette}. 
By a theorem of Sacks and Uhlenbeck \cite{sacksuhlenbeck} and Schoen and Yau \cite{schoenyau}, $J$ is a critical point of $\E_\rho$ if and only if the $\rho$-equivariant harmonic map from $(\widetilde{\Sigma}, J)$ to $G/K$ is weakly conformal or, equivalently, if its image is a branched minimal surface in $G/K$.
Labourie showed in \cite{labourieenergy} that if the representation $\rho$ is Anosov, then its energy function is proper, and thus admits a critical point. 
He conjectured that, for Hitchin representations, this critical point is unique.

\begin{conj}[Labourie]
Let $\Gamma$ be the fundamental group of a closed oriented surface $\Sigma$ of genus at least two. If $\rho$ is a Hitchin representation of $\Gamma$ into a real split Lie group $G$, then there is a unique complex structure $J\in\Teich(\Sigma)$ on $\Sigma$ such that the $\rho$-equivariant harmonic map from $(\widetilde{\Sigma}, J)$ to $G/K$ is weakly conformal.
\end{conj}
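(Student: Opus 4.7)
The plan is to split the conjecture into existence and uniqueness of a critical point of $\E_\rho$ on $\Teich(\Sigma)$. Existence is immediate from the results already cited in the excerpt: Corlette produces an equivariant harmonic map for any reductive $\rho$; Sacks--Uhlenbeck/Schoen--Yau identify critical points of $\E_\rho$ with complex structures for which this map is weakly conformal (equivalently, a branched minimal immersion into $G/K$); and Labourie's properness of $\E_\rho$ for Anosov representations (which covers Hitchin representations) ensures that a minimum is attained. All the content of the conjecture therefore lies in uniqueness, which is genuinely open in general and, as of now, established only in rank $2$.

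In the rank-$2$ cases ($\PSL(3,\R)$, $\PSp(4,\R)$, $\mathrm{G}_2$) the strategy I would adopt is the cyclic Higgs bundle approach of Labourie \cite{labouriecyclic}. Given a Hitchin representation $\rho$ and a critical point $J$, the associated Higgs bundle $(E,\varphi)$ lies in the Hitchin section, and the minimal-surface condition at $J$ is equivalent to the vanishing of the lowest Hitchin differential $\operatorname{tr}(\varphi^2)$. A direct Lie-theoretic check shows that in rank $2$ this forces $\varphi$ to be \emph{cyclic}: in the principal grading of $\mathfrak{g}^{\C}$, only the lowest and highest components are nonzero, and the highest one equals the top Hitchin differential $q$. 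One then diagonalizes the harmonic metric in the basis given by the grading, reducing the Hitchin equations to an affine Toda-type system of scalar PDEs for the logarithms of the conformal factors, coupled to the curvature of $\Sigma$ and to $|q|^2$.

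Uniqueness is then obtained by a maximum-principle comparison. Suppose $J_1$ and $J_2$ are two critical points of $\E_\rho$ giving rise to the same representation. Pulling everything back to a fixed background Riemann surface and taking differences of the corresponding Toda solutions, one obtains a system of elliptic inequalities of the form $\Delta u_i \geq F_i(u_1,\ldots,u_r)$, where the functions $F_i$ have the sign pattern dictated by the Cartan matrix of the principal $\mathfrak{sl}_2$. A standard subharmonicity/Omori--Yau argument, exploiting the fact that the top Hitchin differential $q$ is canonically determined by $\rho$ and that the base curvature is strictly negative, forces each $u_i$ to be identically zero, hence $J_1 = J_2$.

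The main obstacle, and the reason the conjecture remains open beyond rank $2$, is that in higher rank the condition $\operatorname{tr}(\varphi^2)=0$ no longer implies that $\varphi$ is cyclic: several intermediate Hitchin differentials can be nonzero, no Toda reduction is available, and the signed coupling structure on which the maximum principle relies disappears. Overcoming this would require either a new algebraic reason why harmonic metrics at critical points should have enhanced symmetry, or a completely different approach -- for instance, a convexity property of $\E_\rho$ along natural paths in $\Teich(\Sigma)$ -- neither of which is currently available.
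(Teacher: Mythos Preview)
The statement you were given is Labourie's conjecture, which the paper explicitly records as an \emph{open} conjecture and does not claim to prove in full generality. Your proposal correctly recognizes this: you note that existence is settled by Corlette plus Labourie's properness, that uniqueness is the content, and that it is currently known only in rank $2$ via the cyclic Higgs bundle / affine Toda reduction of \cite{labouriecyclic}. That summary is accurate, and your explanation of why the argument breaks down in higher rank (the vanishing of $\tr(\varphi^2)$ no longer forces cyclicity) is the standard and correct diagnosis.

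What you should be aware of, however, is that the paper's own rank-$2$ result (Theorem~\ref{t:LabourieConjecture}, for maximal representations into Hermitian groups of rank $2$, which includes $\PSp(4,\R)\simeq\SO_0(2,3)$ Hitchin representations) is proved by a genuinely different route from the Toda approach you sketch. Instead of analyzing the scalar Hitchin equations via a maximum principle on the surface, the paper associates to each critical point of $\EE_\rho$ a $\rho$-equivariant maximal space-like surface in the pseudo-hyperbolic space $\H^{2,n}$ (Proposition~\ref{p:ExistenceMaximalSurface}), and then proves that such a surface is \emph{unique} by applying a maximum principle to the ambient bilinear pairing $B(u,v)=\langle u,v\rangle$ on the product of two putative invariant maximal surfaces (Theorem~\ref{t:UniquenessMaximalSurfacePrecise}). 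Uniqueness of the critical point then follows because the minimal surface in the Riemannian symmetric space is recovered as the second Gauss map of this maximal surface (Proposition~\ref{p-gaussmaps}, Corollary~\ref{c:UniquenessCriticalPoint}). This pseudo-Riemannian argument works uniformly for all maximal representations in $\SO_0(2,n+1)$, including those whose Higgs bundles are not in the Hitchin section, and requires no curvature estimate or Toda-type comparison; conversely, your Toda sketch is specific to Hitchin representations but applies to split groups such as $\mathrm{G}_2$ that have no $\H^{2,n}$ model.
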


Labourie's conjecture was proven independently by Loftin \cite{loftin} and Labourie \cite{labouriecubic} for $G= \SL(3,\R)$, and then recently by Labourie \cite{labouriecyclic} for other split real Lie groups of rank $2$ (namely, $\PSp(4,\R)$ and $\mathrm{G}_2$). Using the same strategy as Labourie, this was generalized by Alessandrini and the first author \cite{colliercyclic,PSp4maximalRepsAC} to all maximal representations into $\PSp(4,\R)$. Here we give a new proof of their result and extend it to any Hermitian Lie group of rank $2$.

\begin{MonThm} \label{t:LabourieConjecture}
Let $\Gamma$ be the fundamental group of a closed oriented surface $\Sigma$ of genus at least two. If $\rho$ is a maximal representation of $\Gamma$ into a Hermitian Lie group $G$ of rank $2$, then there is a unique complex structure $J\in\Teich(\Sigma)$ such that the $\rho$-equivariant harmonic map from $(\widetilde{\Sigma}, J)$ to $G/K$ is conformal. Moreover, this conformal harmonic map is an embedding.
\end{MonThm}

\begin{rmk} \label{rmk:LieGroupsRank2}
Theorem \ref{t:LabourieConjecture} reduces to a theorem concerning maximal representations into $\SO_0(2,n)$. Indeed, the Hermitian Lie groups of rank $2$ are (up to a cover): $\PU(1,n) \times \PU(1,n)$, $\PSp(4,\R)$, $\PU(2,n)$ and $\SO_0(2,n)$ ($n\geq 5$). By \cite{Toledo}, maximal representations into $\PU(1,n) \times \PU(1,n)$ are conjugate to maximal representations into $\mathrm{P}(\U(1,1) \times \U(n-1)) \times \mathrm{P}(\U(1,1) \times \U(n-1))$. By \cite{burgeriozziwienhard} and \cite{UpqHiggs}, maximal representations into $\PU(2,n)$ are all conjugate to maximal representations into $\mathrm{P}(\U(2,2)\times U(n-2))$ . Finally, $\PU(1,1)\times \PU(1,1)$ is isomorphic to $\PSO_0(2,2)$, $\PSp(4,\R)$ is isomorphic to $\SO_0(2,3)$ and $\PU(2,2)$ is isomorphic to $\PSO_0(2,4)$. 
\end{rmk}

Labourie's conjecture seems to be related to the property of lying in a connected component of Anosov representations. In particular, the conjecture does not hold for quasi-Fuchsian representations. Indeed, Huang and Wang \cite{HuangWang15} constructed quasi-Fuchsian manifolds containing arbitrarily many minimal surfaces. 

\subsection*{Maximal surfaces in $\H^{2,n}$ and strategy of the proof}

Let $\H^{2,n}$ be the space of negative definite lines in $\R^{2,n+1}$. The space $\H^{2,n}$ is an open domain in $\ProjR{n+2}$ on which $\SO_0(2,n+1)$ acts transitively, preserving a pseudo-Riemannian metric of signature $(2,n)$ with constant sectional curvature $-1$. The boundary of $\H^{2,n}$ in $\ProjR{n+2}$ is the space $\Ein^{1,n}$. The cornerstone of all the above results will be the following theorem:

\begin{MonThm} \label{t:ExistenceUniquenessMaximalSurface}
Let $\Gamma$ be the fundamental group of a closed oriented surface $\Sigma$ of genus at least two. If $\rho:\Gamma\to\SO_0(2,n+1)$ is a maximal representation, then there exists a unique $\rho$-equivariant maximal space-like embedding of the universal cover of $\Sigma$ into $\H^{2,n}$.
\end{MonThm}

This theorem generalizes a well-known result of existence of maximal surfaces in some anti-de Sitter $3$-manifolds. More precisely, for $n=1$, maximal representations are exactly the holonomies of globally hyperbolic Cauchy-compact anti-de Sitter $3$-manifolds (see \cite{Mess}). In this particular case, our theorem is due to Barbot, B\'eguin and Zeghib \cite{BBZ} (see also \cite{toulisse} for the case with cone singularities).

The existence part of Theorem \ref{t:ExistenceUniquenessMaximalSurface} will be proven in Section \ref{s:maximalsurface} using Higgs bundle theory. More precisely, we will see that, given a maximal representation $\rho:\Gamma\to\SO_0(2,n+1)$, any critical point of the energy function $\EE_\rho$ gives rise to a $\rho$-equivariant maximal space-like embedding of $\widetilde{\Sigma}$ with the same conformal structure. 
The uniqueness part of Theorem \ref{t:ExistenceUniquenessMaximalSurface} will then directly imply Theorem \ref{t:LabourieConjecture}. Our proof will use the pseudo-Riemannian geometry of $\H^{2,n}$ in a manner similar to \cite{BonsanteSchlenker}. 
\begin{rmk}
The idea of using cyclic Higgs bundles to construct equivariant maps and study geometric structures was first exploited by Baraglia in his thesis \cite{BaragliaThesis}. For the particular case of Hitchin representations into $\SO_0(2,3),$ equivariant maximal surfaces in $\mathbb{H}^{2,2}$ are constructed in \cite[Proposition 3.5.2]{BaragliaThesis} and Baraglia relates those to photon structures on the unit tangent bundle of $\Sigma.$ The existence part of Theorem \ref{t:ExistenceUniquenessMaximalSurface} generalizes these techniques to all maximal $\SO_0(2,n+1)$ cyclic Higgs bundles.
\end{rmk}

We show in Section \ref{ss:extremalsurfaces} that the $\rho$-equivariant minimal surface in the Riemannian symmetric space is the Gauss map of the maximal surface in $\H^{2,n}$. In the case $n=1$, this interpretation recovers the equivalence between the existence of a unique maximal surface in globally hyperbolic anti-de Sitter $3$-manifolds and the result of Schoen \cite{schoen} giving the existence of a unique minimal Lagrangian diffeomorphism isotopic to the identity between hyperbolic surfaces (this equivalence was proven in \cite{krasnovschlenker}).

Now, to each negative definite line $x\in\H^{2,n}$, one can associate a copy of $\Pho(\R^{2,n}) \subset \Pho(\R^{2,n+1})$ defined as the set of photons contained in $x^\perp$. Moreover, the copies of $\Pho(\R^{2,n})$ associated to two such lines $x$ and $y$ are disjoint if and only if $x$ and $y$ are joined by a space-like geodesic. This remark allows us to construct a $\Pho(\R^{2,n+1})$-structure on a fiber bundle over $\Sigma$ from the data of any $\rho$-equivariant space-like embedding of $\widetilde{\Sigma}$ into $\HH^{2,n}$, and hence, prove Theorem \ref{t:photontructures}.

The $\Ein^{1,2}$-structures associated to Hitchin representations into $\SO_0(2,3)$ from Theorem \ref{t:einsteinstructures} are constructed from the unique maximal space-like surface of Theorem~\ref{t:ExistenceUniquenessMaximalSurface} as follows. To each unit tangent vector $v$ of the maximal space-like $\rho$-equivariant embedding of $\widetilde{\Sigma}$ in $\H^{2,2}$, one can associate a point in $\Ein^{1,2} = \partial_\infty \H^{2,2}$ by ``following the geodesic determined by $v$ to infinity''. In this way, one obtains a $\rho$-equivariant map from $T^1 \widetilde{\Sigma}$ to $\Ein^{1,2}$. Using a maximum principle which involves the components of the solution to the self-duality equations, we will prove that this map is a local diffeomorphism. Note that this is specific to Hitchin representations and is not true for other maximal representations.

 Finally, to prove Theorem \ref{t:domination}, we introduce the length spectrum of the maximal $\rho$-equivariant embedding as an intermediate comparison. On the one hand, this length spectrum is larger than the length spectrum of the conformal metric of curvature $-1$ on the maximal surface, and, on the other hand, it is less than the length spectrum of the representation $\rho$. This should be compared to \cite{DeroinTholozan} where Deroin and the second author prove that for any representation $\rho$ into the isometry group of $\H^n$, there exists a Fuchsian representation $j$ such that $L_j\geq L_\rho$. Here, both inequalities are reversed because of the pseudo-Riemannian geometry on $\H^{2,n}$.
 
\begin{rmk}
In the recent paper \cite{DancigerGueritaudKasselPseudoHyperbolic}, Danciger, Gu\'eritaud and Kassel show that many Anosov representations can be seen as acting convex-cocompactly on a pseudo-Riemannian hyperbolic space $\H^{p,q}$. Our maximal representations into $\SO_0(2,n+1)$ are an occurrence of this phenomenon. This gives us hope that pseudo-Riemannian hyperbolic geometry can bring a better understanding of Anosov representations in more generality. 
\end{rmk}

\subsection*{Acknowledgments}
When we started this project, Olivier Guichard and Anna Wienhard very kindly shared their working notes on Einstein structures associated to Hitchin representations with us. In addition, Olivier Guichard carefully read a previous version of this paper and sent us numerous remarks. For this we are very grateful.

The authors gratefully acknowledge support from the NSF grants DMS-1107452, 1107263 and 1107367 “RNMS: GEometric structures And Representation varieties” (the GEAR Network). N. Tholozan's research is partially supported by the ANR project DynGeo (ANR-11-BS01-013). B. Collier's research is supported by the National Science Foundation under Award No. 1604263.  

\section{Maximal representations into $\SO_0(2,n+1)$}

For the rest of the paper, $\Sigma$ will be a closed surface of genus $g\geq2$. We denote by $\Gamma$ its fundamental group and by $\widetilde{\Sigma}$ its universal cover. 
Recall that the group $\Gamma$ is \emph{Gromov hyperbolic} and that its \emph{boundary at infinity}, denoted by $\partial_\infty \Gamma$, is homeomorphic to a circle.

\subsection{The Toledo invariant}

Let $\R^{2,n+1}$ denote the space $\R^{n+3}$ endowed with the quadratic form 
\[\mathbf{q}: (x_1, \ldots ,x_{n+3}) \mapsto x_1^2 + x_2^2 - x_3^2 - \ldots - x_{n+3}^2~.\]
The Lie group $\SO_0(2,n+1)$ is the identity component of the group of linear transformations of $\R^{n+3}$ preserving $\mathbf{q}$. Its subgroup $\SO(2)\times \SO(n+1)$ is a maximal compact subgroup.

To a representation $\rho:\Gamma\to\SO_0(2,n+1)$, one can associate a principal $\SO_0(2,n+1)$-bundle $P_\rho$ whose total space is the quotient of $\widetilde{\Sigma}\times \SO_0(2,n+1)$ by the action of $\Gamma$ by deck transformations:
\[\gamma\cdot (x,y) = (x\cdot\gamma^{-1}, \rho(\gamma)y)~.\]
Since the quotient of $\SO_0(2,n+1)$ by a maximal compact subgroup is contractible, this principal bundle admits a reduction of structure group to a principal $\SO(2)\times \SO(n+1)$-bundle $B_\rho$ which is unique up to gauge equivalence. Finally, the quotient of $B_\rho$ by the right action of $\SO(n+1)$ gives a principal $\SO(2)$-bundle $M_\rho$ on $\Sigma$.

\begin{defi}
The \emph{Toledo invariant} $\tau(\rho)$ of the representation $\rho$ is the Euler class of the $\SO(2)$-bundle $M_\rho$.
\end{defi}

The Toledo invariant is locally constant and invariant by conjugation. It thus defines a map
\[\tau : \xymatrix{\Rep(\Gamma,\SO_0(2,n+1))\ar[r]& \Z},\]
where $\Rep(\Gamma,\SO_0(2,n+1))$ denotes the set of conjugacy classes of representations of $\Gamma$ into $\SO_0(2,n+1)).$
It is proven in \cite{domic} that the Toledo invariant satisfies the \emph{Milnor--Wood inequality}:

\begin{prop}
For each representation $\rho:\Gamma\to\SO_0(2,n+1)$ the Toledo invariant satisfies
\[ |\tau(\rho)| \leq 2g-2~.\]

\end{prop}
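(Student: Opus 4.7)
The plan is to realize the Toledo invariant as the integral over $\Sigma$ of the pullback of a specific $G$-invariant 2-form on the Riemannian symmetric space $X = \SO_0(2,n+1)/(\SO(2)\times\SO(n+1))$, and then bound this integral using the boundedness of that form in the bounded-symmetric-domain realization of $X$.

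\textbf{Step 1: Geometric realization of $\tau(\rho)$.} The symmetric space $X$ is Hermitian symmetric, so it carries a $G$-invariant Kähler form $\omega$ arising from the action of the $\SO(2)$-factor of the isotropy on $T_oX$. Since $X$ is contractible, one can choose a smooth $\rho$-equivariant map $f:\widetilde\Sigma \to X$, unique up to equivariant homotopy. Under this map, the $\SO(2)$-bundle $M_\rho$ is canonically identified with the pullback by $f$ of the natural $\SO(2)$-bundle $\SO_0(2,n+1)/\SO(n+1)\to X$. The curvature of the canonical invariant connection on this bundle is $\omega$ (up to the normalization convention for the Euler class), so Chern--Weil theory gives
\[
  \tau(\rho)\;=\;\frac{1}{2\pi}\int_\Sigma f^*\omega~.
\]

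\textbf{Step 2: The bounded cocycle estimate.} The class of $\omega$ represents a bounded cohomology class on $G$ via the triangle cocycle $c_\omega(g_0, g_1, g_2) = \int_{\Delta(g_0 p, g_1 p, g_2 p)} \omega$ for a fixed basepoint $p \in X$, where $\Delta$ denotes the geodesic triangle. Domic and Toledo prove that $c_\omega$ is uniformly bounded on $G$, with a sharp constant that reflects the Hermitian rank of $X$ and matches the integer normalization of $\tau$. I would then take a smooth triangulation of $\Sigma$ with $F$ faces, lift it equivariantly to $\widetilde\Sigma$, and straighten each face geodesically under $f$; this straightening does not affect $\int_\Sigma f^*\omega$ because $\omega$ is closed and $X$ is contractible (apply Stokes' theorem on the straightening homotopy). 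The integral then decomposes as a sum of $F$ triangle integrals, each bounded by the Domic--Toledo estimate.

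\textbf{Step 3: Counting triangles.} From $V-E+F=2-2g$ and $3F=2E$ one extracts $F-2V=4g-4$, and the minimal triangulations (with $V=1$) realize this count sharply. Plugging the sharp triangle bound and the sharp face count into the sum from Step 2 produces $|\tau(\rho)|\leq 2g-2$.

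\textbf{Main obstacle.} The crux is the sharp bound on the triangle cocycle $c_\omega$ in Step 2, which must be exactly calibrated so that the combinatorial count of Step 3 reproduces the Euler characteristic $2g-2$ rather than a weaker multiple of it. This sharp estimate is the content of Domic and Toledo's paper and uses the Jordan-algebraic description of the bounded symmetric domain $X$; in particular, the structure of tube-type domains and the explicit behavior of the Bergman kernel along geodesic triangles play an essential role. Steps 1 and 3 are otherwise routine.
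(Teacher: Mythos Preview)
The paper does not actually prove this proposition; it simply attributes the Milnor--Wood inequality to Domic--Toledo and moves on. Your proposal is a sketch of precisely that argument, so in that sense you and the paper agree.

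One point in your Step~3 deserves care. The identity $F-2V=4g-4$ holds for \emph{every} triangulation of $\Sigma$ (it is an immediate consequence of $V-E+F=2-2g$ and $3F=2E$), so the phrase ``minimal triangulations realize this count sharply'' is not quite right, and more importantly it is not clear from your write-up how the quantity $F-2V$, rather than $F$ alone, enters the estimate. Naively summing the Domic--Toledo triangle bound over the $F$ faces of a one-vertex triangulation yields only a bound proportional to $F=4g-2$, which is off from the sharp constant. The sharp inequality comes either from pairing the bounded K\"ahler cocycle against the Gromov simplicial volume $\|[\Sigma]\|_1=4g-4$ in bounded cohomology, or from the classical refinement (going back to Milnor and Wood in the flat-circle-bundle setting) where one first modifies the cocycle by a coboundary concentrated at the vertices before summing. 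Either route explains where the ``$-2V$'' correction comes from; without making this mechanism explicit, the argument as written does not actually deliver the exact constant $2g-2$.
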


This leads to the following definition:

\begin{defi}
A representation $\rho: \Gamma \to \SO_0(2,n+1)$ is \emph{maximal} if $\tau(\rho) = 2g-2$. 
\end{defi}
\begin{rmk}
	The Toledo number $\tau(\rho)\in\Z$ depends
on the identification $\Z\cong H^2(\Sigma,\Z)$ and hence on the orientation of the surface. Reversing the orientation changes the Toledo number to its opposite. For this reason, we focus on the case $\tau(\rho)\geq 0$ and, in particular, $\tau(\rho)=2g-2.$
\end{rmk}

\subsection{Maximal representations are Anosov}

The Toledo invariant and the notion of maximal representations can be defined more generally for representations of $\Gamma$ into Hermitian Lie groups. In \cite{burgeriozziwienhard}, Burger, Iozzi and Wienhard study these representations. They prove in particular that for any Hermitian Lie group $G$ \emph{of tube type}, there exist maximal representations of $\Gamma$ into $G$ that have Zariski dense image. This applies in particular to maximal representations into $\SO_0(2,n+1)$. 

In that same paper, they exhibit a very nice geometric property of maximal representations that was reinterpreted in \cite{BILW} as the \emph{Anosov property} introduced independently by Labourie in \cite{labouriehyperconvex}. Here we describe one of the main consequences of their work in our setting.

Let $\Ein^{1,n} \subset \ProjR{n+2}$ denote the space of isotropic lines in $\R^{2,n+1}$. The group $\SO_0(2,n+1)$ acts transitively on $\Ein^{1,n}$ and preserves the conformal class of a pseudo-Riemannian metric of signature $(1,n)$.
We will say that three isotropic lines $[e_1], [e_2]$ and $[e_3]$ in $\Ein^{1,n}$ are \emph{in a space-like configuration} if the quadratic form $\mathbf{q}$ restricted to the vector space spanned by $e_1$, $e_2$ and $e_3$ has signature $(2,1)$.

\begin{theo}[Burger--Iozzi--Labourie--Wienhard, \cite{BILW}] \label{t:AnosovCurve}
If $\rho: \Gamma \to \SO_0(2,n+1)$ is a maximal representation, then there is a unique $\rho$-equivariant continuous embedding
\[\xi: \partial_\infty \Gamma \to \Ein^{1,n}~.\]
Moreover, the image of $\xi$ is a \emph{space-like curve}, meaning that the images of any three distinct points in $\partial_\infty \Gamma$ are in a space-like configuration.
\end{theo}

Note that the result of Burger, Iozzi, Labourie and Wienhard does not concern directly the case $G=\SO_0(2,n+1)$, but $G=\SU(p,q)$ and $G=\Sp(2n,\R)$. However, as proven by Pozzetti and Hamlet \cite{pozzettihamlet}, there exists a tight homomorphism $\iota: \SO_0(2,n+1) \to \Sp(2m,\R)$ for some $m\in \N$. This property is sufficient to extend the result to the case of $\SO_0(2,n+1)$.

The Anosov property implies that maximal representations are \emph{proximal}. In particular, the limit curve $\xi$ can be reconstructed from the attracting and repelling eigenvectors of $\rho(\gamma)$ for $\gamma \in \Gamma$. More precisely, we have the following:

\begin{coro}[see for instance \cite{BochiPotrieSambarino}] \label{c:AnosovProximal}
For every $\gamma \in \Gamma\backslash\{\mathrm{id}\}$, let $\gamma_+$ and $\gamma_-$ denote the attracting and repelling fixed points of $\gamma$ in $\partial_\infty \Gamma$ and let $\lambda$ denote the spectral radius of $\gamma$. Then $\xi(\gamma_+)$ and $\xi(\gamma_-)$ are the eigen-directions of $\rho(\gamma)$ for the eigenvalues $\lambda$ and $\lambda^{-1}$ respectively. Moreover, the $2$-plane spanned $\xi(\gamma_+)$ and $\xi(\gamma_-)$ is non-degenerate with respect to $\mathbf{q}$, and the restriction of $\rho(\gamma)$ to $\xi(\gamma_-)^\perp \cap \xi(\gamma_+)^\perp$ has spectral radius strictly less than $\lambda$.
\end{coro}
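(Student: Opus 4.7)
The strategy is to combine the equivariance and injectivity of the limit curve $\xi$ from Theorem \ref{t:AnosovCurve} with the north--south dynamics of $\gamma$ on $\partial_\infty \Gamma$.

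Equivariance of $\xi$ gives that the isotropic lines $\xi(\gamma_\pm)$ are both $\rho(\gamma)$-invariant. To see that the $2$-plane $V := \xi(\gamma_+) \oplus \xi(\gamma_-)$ is non-degenerate, pick any third $\eta \in \partial_\infty \Gamma$: by Theorem \ref{t:AnosovCurve} the $3$-plane $W := V + \xi(\eta)$ has signature $(2,1)$, hence Witt index $1$, so it contains no totally isotropic $2$-plane. Fixing generators $e_\pm \in \xi(\gamma_\pm)$, the matrix of $\mathbf{q}|_V$ in the basis $(e_+, e_-)$ is anti-diagonal with off-diagonal entry $\mathbf{q}(e_+, e_-)$; thus $\mathbf{q}|_V$ is either identically zero (making $V$ totally isotropic, excluded) or non-degenerate of signature $(1,1)$. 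Therefore $V$ is a hyperbolic plane and $\R^{2,n+1} = V \oplus V^\perp$ is a $\rho(\gamma)$-invariant orthogonal splitting with $V^\perp$ of signature $(1,n)$.

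On $V$ the element $\rho(\gamma)$ preserves the two isotropic lines $\R e_\pm$ and the hyperbolic form, hence acts as $\mathrm{diag}(\mu, \mu^{-1})$ for some real $\mu \neq 0$; positivity of $\mu$ may be established by continuously deforming $\rho$ through maximal representations to a Fuchsian one (for which the eigenvalues on the invariant hyperbolic plane are positive). To determine which line carries the larger eigenvalue and obtain $\mu > 1$, I use the north--south dynamics: for $\eta \in \partial_\infty \Gamma \setminus \{\gamma_\pm\}$, $\gamma^n \eta \to \gamma_+$, whence by continuity $\rho(\gamma)^n \xi(\eta) \to \xi(\gamma_+)$ in $\Ein^{1,n}$. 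Writing a generator of $\xi(\eta)$ as $v = a e_+ + b e_- + w$ with $w \in V^\perp$, both $a$ and $b$ are nonzero: indeed if, say, $a = 0$, the isotropy of $v$ would force $w$ to be isotropic in $V^\perp$, making the $3$-plane $\mathrm{span}(e_+, e_-, w)$ degenerate and contradicting the space-like condition. From the identity
\[
\rho(\gamma)^n v \;=\; a\mu^n e_+ + b\mu^{-n} e_- + \rho(\gamma)^n w,
\]
projective convergence to $[e_+]$ forces $\mu^{-2n} \to 0$, so $|\mu| > 1$; after swapping $e_\pm$ if necessary we set $\lambda = \mu > 1$.

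The delicate step, which I expect to be the main obstacle, is the final spectral-radius bound on $V^\perp$: the same convergence yields $\|\rho(\gamma)^n w\|/\lambda^n \to 0$, but a priori only for the specific direction $w = w(\eta)$. To upgrade this to a bound on the full spectral radius of $\rho(\gamma)|_{V^\perp}$, I would vary $\eta$ continuously over $\partial_\infty \Gamma \setminus \{\gamma_\pm\}$: were all $w(\eta)$ to avoid the top generalized eigenspace of $\rho(\gamma)|_{V^\perp}$, then the whole limit curve $\xi(\partial_\infty \Gamma)$ would sit in a proper $\rho(\gamma)$-invariant subspace $V \oplus E \subsetneq \R^{2,n+1}$, and a short linear-algebra argument using the Witt-index obstruction of Theorem \ref{t:AnosovCurve} rules this out by producing space-like triples near $\xi(\gamma_+)$ that cannot fit into such a subspace. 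Standard Jordan-form estimates then convert the decay of $\|\rho(\gamma)^n w(\eta)\|/\lambda^n$ along a generic $\eta$ into the assertion that the spectral radius of $\rho(\gamma)|_{V^\perp}$ is strictly less than $\lambda$.
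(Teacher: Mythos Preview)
The paper does not actually give a proof here: the sentence preceding the corollary simply says ``The Anosov property implies that maximal representations are loxodromic,'' and the intended argument is the standard one-line citation of $P_1$-Anosov $\Rightarrow$ $P_1$-proximal from \cite{BILW,labouriehyperconvex}, which already packages the positivity and simplicity of the top eigenvalue together with the strict domination of the complementary spectrum. Your route is genuinely different and more hands-on: you attempt to extract all three conclusions from Theorem~\ref{t:AnosovCurve} alone (the equivariant boundary map with the space-like triple condition), without invoking the domination/flow side of the Anosov definition. Your arguments for the non-degeneracy of $V$, for $a,b\neq 0$, and for $|\mu|>1$ are correct and self-contained.

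Two points need work. First, \emph{positivity of $\mu$}: deforming to the Fuchsian locus does not reach every connected component. By Proposition~\ref{p:Fuchsian Locus n=2}, the Gothen components $\Rep^{max}_d(\Gamma,\SO_0(2,3))$ with $0<d\le 4g-4$ contain no representation in the Fuchsian locus of Definition~\ref{d:FuchsianLocus}, so your continuity argument as stated has a gap. The quickest fix is precisely the Anosov/proximality package the paper invokes; if you want to stay intrinsic, you would need to show directly that the space-like curve lifts to $\hat\Ein^{1,n}$ (so that $\rho(\gamma)$ preserves a ray over $\xi(\gamma_+)$ and hence acts by a positive scalar), but beware that the paper only obtains such a lift later, in Corollary~\ref{c:2LiftsBoundary}, via the maximal surface.

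Second, the \emph{spectral radius on $V^\perp$}: your sketch is on the right track, but ``a short linear-algebra argument using the Witt-index obstruction'' hides the actual mechanism, and ``space-like triples near $\xi(\gamma_+)$'' is a red herring (any triple will do). What is needed is this: if $\rho(\gamma)|_{V^\perp}$ has spectral radius $\nu\ge\lambda$, then since $V^\perp$ has signature $(1,n)$ and Witt index $1$, the off-unit-circle spectrum is a single real pair with isotropic eigenlines $f_\pm$ spanning a $(1,1)$-plane, and the unit-circle part is the negative definite orthogonal of $\mathrm{span}(f_+,f_-)$. The subspace $E$ of generalized eigenvectors of modulus $<\lambda$ is then $\R f_-\oplus(\text{negative definite})$, hence contains no positive vector; so $V\oplus E$ has positive index $1$ and cannot contain the signature-$(2,1)$ span of any space-like triple, contradicting Theorem~\ref{t:AnosovCurve}. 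With this spelled out, your alternative argument for the spectral bound is complete.
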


For $n=0$, maximal representations into $\SO_0(2,1)$ correspond to Fuchsian representations \cite{GoldmanTopologicalComponents}. The isometric inclusion 
$$\begin{array}{lll}
~\R^{2,1} & \longrightarrow & \R^{2,n+1} \\
(x_1,x_2,x_3) & \longmapsto & (x_1,x_2,x_3,0,\cdots,0)
\end{array}$$
defines an inclusion $\iota_n: \SO_0(2,1)\hookrightarrow\SO_0(2,n+1)$ which preserves the Toledo invariant. 
In particular, given a Fuchsian representation $j:\Gamma\to\SO_0(2,1)$, the $\SO_0(2,n+1)$-representation $\iota_n\circ j$ is maximal.

If $\alpha:\Gamma\to\mathrm{O}(n)$ is an orthogonal representation, let $\det(\alpha):\Gamma\to\mathrm{O}(1)$ be the determinant representation. One can construct the representation
$$j\otimes \det(\alpha): \Gamma \to \mathrm{O}(2,1),$$
obtained by twisting $j$ by $\det(\alpha)$. More precisely, $j\otimes \det(\alpha)$ takes value in the index two subgroup of $\mathrm{O}(2,1)$ whose maximal compact subgroup is $\SO(2)\times \mathrm{O}(1)$.

\begin{prop}
The maximal representation
\[\rho=\left(j\otimes \det(\alpha)\right)\oplus\alpha:\Gamma\to\mathrm{O}(2,n+1)\]
takes value in $\SO_0(2,n+1)$.
\end{prop}
\begin{proof}
Because $j$ takes value in $\SO_0(2,1)$, one can deform $j(\gamma)$ to the identity in $\SO_0(2,1)$ for any $\gamma\in \Gamma$. In particular, $\rho(\gamma)$ can be deformed to an element in $\SO(2)\times \SO(n+1)\subset \SO_0(2,n+1)$.
\end{proof}

\begin{defi}\label{d:FuchsianLocus}
	A maximal representation $\rho:\Gamma\to\SO_0(2,n+1)$ lies in the {\em Fuchsian locus} if it preserves a three dimensional linear subspace of $\R^{2,n+1}$ in restriction to which $\mathbf{q}$ has signature $(2,1)$;
	equivalently, $\rho$ is in the Fuchsian locus if \[\rho=\big(j\otimes \det(\alpha)\big)\oplus \alpha\]
	for some Fuchsian representation $j:\Gamma\to\SO_0(2,1)$ and some $\alpha:\Gamma\to\mathrm{O}(n)$.
\end{defi}

\subsection{Harmonic metrics and Higgs bundles}\label{s:harmonicmetrics}
We now recall the non-abelian Hodge correspondence between
representations of $\Gamma$ into $\SO_0(2,n+1)$ and $\SO_0(2,n+1)$-Higgs bundles. This correspondence holds for any real reductive Lie group $G$, but we will restrict the discussion to our group of interest.

When the surface $\Sigma$ is endowed with a complex structure, we will denote the associated Riemann surface by $X.$ The canonical bundle of $X$ will be denoted by $\mathcal{K}$ and the trivial bundle will be denoted by $\Oo.$ We also denote the Riemannian symmetric space of $\SO_0(2,n+1)$ by $\mathfrak{X}$, namely
\[\mathfrak{X}=\SO_0(2,n+1)/(\SO(2)\times\SO(n+1)).\]

\begin{rmk}
The symmetric space $\mathfrak{X}$ can be viewed as the space of space-like $2$-planes in $\R^{2,n+1}$ since $\SO_0(2,n+1)$ acts transitively on the space of such $2$-planes, with stabilizer $\SO(2)\times\SO(n+1)$. Equivalently, we can see $\mathfrak{X}$ as the space of positive-definite scalar products on $\R^{2,n+1}$ of the form $\mathbf{q}(\cdot, \sigma \cdot)$, where $\sigma$ is the orthogonal reflection along a space-like $2$-plane.
\end{rmk}

Let us start by recalling the notion of a harmonic metric. 

\begin{defi}
Let $\rho:\Gamma\to\SO_0(2,n+1)$ be a representation and let $P_\rho$ be the associated flat $\SO_0(2,n+1)$-bundle. A {\em metric} on $P_\rho$ is a reduction of structure group to $\SO(2)\times \SO(n+1)$. Equivalently, a metric is a $\rho$-equivariant map 
\[\textbf{h}_\rho:\xymatrix{\widetilde \Sigma\ar[r]&\mathfrak{X}}.\]
\end{defi}

The differential $d\textbf{h}_\rho$ of a (smooth) metric $\textbf{h}_\rho$ is a section of $T^*\widetilde\Sigma \otimes \textbf{h}_\rho^* T\mathfrak{X}$. 
Given a metric $g$ on $\Sigma$, one can define the norm $\Vert d\textbf{h}_\rho \Vert$ of $d\textbf{h}_\rho$ which, by equivariance of $\textbf{h}_\rho$, is invariant under the action of $\Gamma$ on $\widetilde\Sigma$ by deck transformations. 
In particular, $\Vert d\textbf{h}_\rho \Vert$ descends to a function on $\Sigma$. The {\em energy} of $\textbf{h}_\rho$ is the $L^2$-norm of $d\textbf{h}_\rho$, namely:
\[\EE(\textbf{h}_\rho)=\int_\Sigma\Vert d\textbf{h}_\rho\Vert^2dv_g.\]
Note that the energy of $\textbf{h}_\rho$ depends only on the conformal class of the metric $g,$ and so, only on the Riemann surface structure $X$ associated to $g$.

\begin{defi}
A metric $\textbf{h}_\rho: \widetilde X\rightarrow \mathfrak{X}$ on $P_\rho$ is \textit{harmonic} if it is a critical point of the energy functional.  
\end{defi}
The complex structure on $X$ and the Levi-Civita connection on $\mathfrak{X}$ induce a holomorphic structure $\nabla^{0,1}$ on the bundle $\big( T^*X\otimes \textbf{h}_\rho^*T\mathfrak{X}\big)\otimes\C$. The following is classical (see \cite[p. 425]{wood}):
\begin{prop}\label{p-harmonicholomorphic}
	A metric $\textbf{h}_\rho:\widetilde X\rightarrow \mathfrak{X}$ is harmonic if and only if the $(1,0)$ part $\partial \textbf{h}_\rho$ of $d\textbf{h}_\rho$ is holomorphic, that is 
	\[\nabla^{0,1}\partial \textbf{h}_\rho=0.\]
\end{prop}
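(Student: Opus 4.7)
The plan is to derive the Euler--Lagrange equation for $\EE$ and translate it into the Dolbeault-operator language of the right-hand side, using isothermal coordinates to exploit the conformal invariance of the energy.

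First, I would compute the first variation of $\EE$. Given a one-parameter family $\textbf{h}_t$ of $\rho$-equivariant maps with infinitesimal variation $V \in \Gamma(\textbf{h}_\rho^* T\mathfrak{X})^\Gamma$, a standard integration by parts using the compatibility of the Levi--Civita connection on $\mathfrak{X}$ with its metric gives
\[\frac{d}{dt}\bigg|_{t=0} \EE(\textbf{h}_t) \;=\; -\int_\Sigma \bigl\langle V,\; \tau(\textbf{h}_\rho)\bigr\rangle_\mathfrak{X}\, dv_g,\]
where $\tau(\textbf{h}_\rho) := \mathrm{tr}_g(\nabla d\textbf{h}_\rho)$ is the tension field and $\nabla$ is the connection on $T^*\widetilde\Sigma \otimes \textbf{h}_\rho^* T\mathfrak{X}$ induced by the Levi--Civita connections on $\Sigma$ and on $\mathfrak{X}$. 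Since $V$ can be chosen arbitrarily on a fundamental domain, harmonicity is equivalent to $\tau(\textbf{h}_\rho) = 0$.

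Next, I would pass to isothermal coordinates. Since $\EE$ depends only on the conformal class of $g$, fix a local holomorphic coordinate $z$ on $X$ in which $g = e^{2u}|dz|^2$. Complexifying the target bundle and decomposing $d\textbf{h}_\rho = \partial\textbf{h}_\rho + \bar\partial\textbf{h}_\rho$ with $\partial\textbf{h}_\rho = (\partial_z \textbf{h}_\rho)\, dz$, a short calculation --- in which the Christoffel symbols of the conformal factor contribute a term proportional to $\mathrm{tr}_g g$ that cancels --- yields
\[\tau(\textbf{h}_\rho) \;=\; 4 e^{-2u}\, \nabla_{\partial_{\bar z}}(\partial_z\textbf{h}_\rho),\]
where on the right $\nabla$ denotes the $\C$-linear extension of the pulled-back Levi--Civita connection on $(\textbf{h}_\rho^* T\mathfrak{X})_\C$. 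Matching this with $\nabla^{0,1}\partial\textbf{h}_\rho$ is then essentially tautological: by definition, the $\bar\partial$-operator on the complexified bundle $(T^*X \otimes \textbf{h}_\rho^* T\mathfrak{X})\otimes\C$ is the $(0,1)$ part of the tensor-product connection, and applied to the $(1,0)$-form $\partial\textbf{h}_\rho$ it returns $\nabla_{\partial_{\bar z}}(\partial_z\textbf{h}_\rho)\, d\bar z \otimes dz$ since $\bar\partial\, dz = 0$. Combined with the coordinate formula above, this yields the equivalence $\tau(\textbf{h}_\rho) = 0 \iff \nabla^{0,1}\partial\textbf{h}_\rho = 0$.

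The main technical point is the computation in the second step: one must carefully track how the Levi--Civita connection of $e^{2u}|dz|^2$ acts and verify that the terms in $\partial u$ and $\bar\partial u$ drop out of the trace. This cancellation is the analytic incarnation of the conformal invariance of harmonicity in two dimensions, and it is exactly what allows the right-hand side of the proposition to be stated without reference to the metric $g$, depending only on the Riemann surface structure $X$.
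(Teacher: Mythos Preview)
The paper does not actually prove this proposition: it is introduced with the sentence ``The following is classical'' and stated without argument. Your proposal supplies exactly the standard proof that the authors are implicitly invoking, namely the first-variation computation yielding $\tau(\textbf{h}_\rho)=0$ followed by the isothermal-coordinate identity $\tau(\textbf{h}_\rho)=4e^{-2u}\nabla_{\partial_{\bar z}}(\partial_z\textbf{h}_\rho)$. The argument is correct and complete; the only quibble is the phrase ``a term proportional to $\mathrm{tr}_g g$ that cancels,'' which is not quite the right description of why the source Christoffel symbols disappear --- the actual reason is that in a $2$-dimensional conformal metric $g=e^{2u}|dz|^2$ one has $g^{ij}\Gamma^k_{ij}=0$ identically, so the contracted source term vanishes outright rather than cancelling against something else.
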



A representation $\rho:\Gamma\rightarrow \SO_0(2,n+1)$ is {\em completely reducible} if any $\rho(\Gamma)$-invariant subspace of $\R^{n+3}$ has a $\rho(\Gamma)$-invariant complement. For completely reducible representations, we have the following theorem.
\begin{theo}[Corlette \cite{corlette}]\label{t:Corlette}
A representation $\rho:\Gamma\rightarrow \SO_0(2,n+1)$ is completely reducible if and only if, for each Riemann surface structure $X$ on $\Sigma,$ there exists a harmonic metric 
$\textbf{h}_\rho:\widetilde X\rightarrow \mathfrak{X}.$ Moreover, a harmonic metric is unique up to the action of the centralizer of $\rho.$ 
 \end{theo}

 \begin{rmk}
 	In \cite{burgeriozziwienhard}, it is shown that all maximal representations are completely reducible and that the centralizer of a maximal representation is compact. Thus, for maximal representations harmonic metrics are unique. 
 \end{rmk}

For a completely reducible representation $\rho$, the energy of the harmonic metric $\textbf{h}_\rho$ defines a function on the Teichm\"uller space $\Teich(\Sigma)$ of $\Sigma$

\begin{equation}
		\label{eq:Energy on Teich} \EE_\rho:\xymatrix@R=0em{\Teich(\Sigma)\ar[r]&\R\\X\ar@{|->}[r]&\EE(\textbf{h}_\rho)}~.
	\end{equation}	
The critical points of the energy are determined by the following.
 \begin{prop}[Sacks-Uhlenbeck \cite{sacksuhlenbeck}, Schoen-Yau \cite{schoenyau}]\label{p:conformal Branched Minimal Imm}
 A harmonic metric $\textbf{h}_\rho$ is a critical point of $\EE_\rho$ if and only if it is weakly conformal,
 i.e. $\tr(\partial \textbf{h}_\rho \otimes \partial \textbf{h}_\rho)=0.$ This is equivalent to the image of $\textbf{h}_\rho$ being a branched minimal immersion.
\end{prop}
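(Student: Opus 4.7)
The plan is to compute the first variation of $\EE_\rho$ along a deformation of the complex structure on $\Sigma$ and identify it with a pairing between the Hopf differential of $\mathbf{h}_\rho$ and the corresponding Beltrami differential. I would begin by noting that, in real dimension two, the energy density $\Vert d\mathbf{h}_\rho\Vert^2\,dv_g$ is conformally invariant, so $\EE_\rho$ descends to a function of the Riemann surface structure $X$ alone. Writing $d\mathbf{h}_\rho = \partial\mathbf{h}_\rho + \bar\partial\mathbf{h}_\rho$, one may rewrite $\EE_\rho(X) = 2\int_\Sigma \Vert\partial \mathbf{h}_\rho\Vert^2$. A tangent vector to $\Teich(\Sigma)$ at $X$ is represented by a Beltrami differential $\mu \in \Omega^{0,1}(X, TX)$; keeping the underlying smooth $\Gamma$-equivariant map $\mathbf{h}_\rho$ fixed and deforming only the complex structure, a direct computation (cf.\ Tromba, Jost) gives
\begin{equation*}
\left.\tfrac{d}{dt}\right|_{t=0} \EE_\rho(X_t) \;=\; -4\,\mathrm{Re}\int_X \langle q, \mu\rangle,
\end{equation*}
where $q = \tr(\partial \mathbf{h}_\rho\otimes \partial \mathbf{h}_\rho) \in \Gamma(X, \mathcal{K}^2)$ is the Hopf differential and $\langle\cdot,\cdot\rangle$ denotes the natural pointwise pairing of quadratic differentials with Beltrami differentials.

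The next step is to observe, using Proposition~\ref{p-harmonicholomorphic}, that $\partial\mathbf{h}_\rho$ is $\nabla^{0,1}$-holomorphic; since the trace is induced by the $\nabla$-parallel metric on $T\mathfrak{X}$, the Hopf differential $q$ is itself a holomorphic section of $\mathcal{K}^2$. By Serre duality, the pairing of $H^0(X,\mathcal{K}^2)$ with $H^{0,1}(X,TX)$ is non-degenerate, so the vanishing of the first variation of $\EE_\rho$ in every direction $\mu$ is equivalent to $q=0$, i.e.\ to weak conformality of $\mathbf{h}_\rho$.

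For the final equivalence, I would argue that a weakly conformal harmonic map is a branched minimal immersion. Off the zero locus of $\partial\mathbf{h}_\rho$, the vanishing of $\tr(\partial\mathbf{h}_\rho\otimes\partial\mathbf{h}_\rho)$ says exactly that $\mathrm{Re}(\partial\mathbf{h}_\rho)$ and $\mathrm{Im}(\partial\mathbf{h}_\rho)$ are orthogonal vectors of equal norm in $T\mathfrak{X}$, so $\mathbf{h}_\rho$ is a conformal immersion there, and conformality upgrades harmonicity (vanishing tension field) to minimality (vanishing mean curvature). At the zeros of $\partial\mathbf{h}_\rho$, the Hartman--Wintner/Chern asymptotic expansion applied to the elliptic system $\nabla^{0,1}(\partial\mathbf{h}_\rho)=0$ shows that these zeros are isolated and that $\mathbf{h}_\rho$ has the standard local model of a branch point, giving the branched minimal immersion conclusion.

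The main technical point is the first-variation formula: one must keep careful track of the fact that changing $X$ changes the type decomposition $d\mathbf{h}_\rho = \partial\mathbf{h}_\rho + \bar\partial\mathbf{h}_\rho$, and show that terms proportional to $\bar\partial\mathbf{h}_\rho$ (against which $\mathbf{h}_\rho$ is harmonic) contribute zero to the first order thanks to Proposition~\ref{p-harmonicholomorphic}. Once this identification is established, the characterization of critical points and the passage to branched minimal immersions are algebraic consequences.
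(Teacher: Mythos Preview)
The paper does not actually prove this proposition: it is stated with attribution to Sacks--Uhlenbeck and Schoen--Yau and used as a black box throughout. So there is no ``paper's own proof'' to compare against; your sketch is being judged on its own terms.

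Your outline is the standard one and is essentially correct. The key ingredients --- the first-variation formula for the energy under change of conformal structure, holomorphicity of the Hopf differential for harmonic maps, the Serre duality pairing between $H^0(X,\mathcal{K}^2)$ and $H^{0,1}(X,TX)$, and the local analysis showing that weakly conformal harmonic maps are branched minimal immersions --- are all in place. One point deserves a cleaner statement: when you vary $X\in\Teich(\Sigma)$, the harmonic map $\mathbf{h}_\rho$ itself varies (Corlette's harmonic metric depends on $X$), so a priori $\frac{d}{dt}\EE_\rho(X_t)$ has a contribution from the variation of the map as well. The reason you may compute with the map held fixed is that $\mathbf{h}_\rho$ is, by definition, a critical point of the energy among all equivariant maps for the fixed complex structure $X$, so the chain-rule term coming from the variation of the map vanishes to first order. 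Your final paragraph gestures at this but conflates it with the $\bar\partial$-term bookkeeping; separating the two clarifies the argument considerably.
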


For Anosov representations, Labourie has shown that the energy function $\EE_\rho$ is smooth and proper, and so, has a critical point. As a corollary we have:

\begin{prop}[Labourie \cite{labourieenergy}]\label{p:Labourie Existence}
	For each maximal representation there exists a Riemann surface structure on $\Sigma$ for which the harmonic metric is weakly conformal.
\end{prop}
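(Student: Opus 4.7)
The plan is to chain together three results already in hand: the Anosov property of maximal representations, Labourie's properness theorem for the energy functional, and the Sacks--Uhlenbeck/Schoen--Yau characterization of critical points.

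First, I would verify that the energy function $\mathcal{E}_\rho$ in \eqref{eq:energyfunction} is actually defined on all of $\Teich(\Sigma)$. By the remark immediately following Theorem \ref{t:Corlette}, every maximal representation $\rho : \Gamma \to \SO_0(2,n+1)$ is completely reducible and has compact centralizer. Corlette's theorem then guarantees, for each complex structure $X \in \Teich(\Sigma)$, a harmonic $\rho$-equivariant map $\mathbf{h}_\rho : \widetilde{X} \to \mathfrak{X}$, unique up to the (compact) centralizer action; in particular its energy is a well-defined real number, giving a bona fide function $\mathcal{E}_\rho : \Teich(\Sigma) \to \R^{\geq 0}$.

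Next, since by Theorem \ref{t:AnosovCurve} and the discussion following it every maximal representation is Anosov, Labourie's theorem from \cite{labourieenergy} applies: the function $\mathcal{E}_\rho$ is smooth and, crucially, proper on $\Teich(\Sigma)$. Since $\mathcal{E}_\rho$ is nonnegative and proper on a connected manifold, it attains a global minimum at some $X_0 \in \Teich(\Sigma)$. In particular, $X_0$ is a critical point of $\mathcal{E}_\rho$.

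Finally, I invoke Proposition \ref{p:conformal Branched Minimal Imm}: a harmonic metric is a critical point of $\mathcal{E}_\rho$ if and only if $\tr(\partial \mathbf{h}_\rho \otimes \partial \mathbf{h}_\rho) = 0$, i.e.\ $\mathbf{h}_\rho$ is weakly conformal (equivalently, a branched minimal immersion). Applying this at $X_0$ yields the claim. The only genuinely substantial input is Labourie's properness theorem, which is assumed as a black box here; the rest of the argument is a direct assembly of existence (Corlette), properness (Labourie), and the variational characterization of conformality (Sacks--Uhlenbeck and Schoen--Yau). Uniqueness of the critical $X_0$ is not asserted at this stage -- that is precisely the content of the conjecture of Labourie to be addressed later in the paper.
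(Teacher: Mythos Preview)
Your argument is correct and mirrors the paper's own justification: the paper simply notes that for Anosov representations Labourie showed the energy functional is smooth and proper, hence has a critical point, and states the proposition as an immediate corollary. Your write-up is more explicit about the ingredients (complete reducibility for well-definedness, Anosov for properness, Sacks--Uhlenbeck/Schoen--Yau for the conformality characterization), but the approach is identical.
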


We now recall the notion of a Higgs bundle on a Riemann surface $X$.



\begin{defi}
	A $\mathrm{GL}(n,\C)$-Higgs bundle on $X$ is a pair $(\Ee,\Phi)$ where $\Ee$ is a rank $n$ holomorphic vector bundle and $\Phi\in H^0(\End(\Ee)\otimes \mathcal{K})$ is a holomorphic endomorphism of $\Ee$ twisted by $\mathcal{K}$. An $\SL(n,\C)$-Higgs bundle on $X$ consists of such a pair $(\Ee,\Phi)$ with $\tr(\Phi)=0$ and a fixed trivialization $\Lambda^n\Ee\cong\Oo.$ 
\end{defi} 
Higgs bundles were originally defined by Hitchin \cite{hitchinselfduality} for the group $\SL(2,\C)$ and generalized by Simpson \cite{simpsonVHS} for any complex semi-simple Lie group. 
More generally, Higgs bundles can be defined for real reductive Lie groups \cite{HiggsPairsSTABILITY}. For the group $\SO_0(2,n+1)$ the appropriate vector bundle definition is the following. 

\begin{defi}
 	An $\SO_0(2,n+1)$-Higgs bundle over a Riemann surface $X$ is a tuple $(\Uu,q_\Uu,\Vv,q_\Vv,\eta)$ where
 	\begin{itemize}
 		\item $\Uu$ and $\Vv$ are respectively rank 2 and rank $(n+1)$ holomorphic vector bundles on $X$ with trivial determinant and trivializations $\Lambda^2\Uu\cong \Oo,~\Lambda^{n+1}\Vv\cong\Oo$.
 		\item $q_\Uu$ and $q_\Vv$ are non-degenerate holomorphic sections of $\Sym^2(\Uu^*)$ and $\Sym^2(\Vv^*)$,
 		\item $\eta$ is a holomorphic section of $\Hom(\Uu,\Vv)\otimes \mathcal{K}.$ 
 	\end{itemize}   
 \end{defi}
The non-degenerate sections $q_\Uu$ and $q_\Vv$ define holomorphic isomorphisms 
\[\xymatrix{q_\Uu:\Uu\to\Uu^*&\text{and}&q_\Vv:\Vv\to\Vv^*}.\]
	Given an $\SO_0(2,n+1)$-Higgs bundle $(\Uu,q_\Uu,\Vv,q_\Vv,\eta)$, we get an $\SL(n+3,\C)$-Higgs bundle $(\Ee,\Phi)$ by setting $\Ee=\Uu\oplus\Vv$ and 
	\begin{equation}\label{eq:SL(n+3,C)HiggsBundle}
		\Phi=\mtrx{0&\eta^\dagger\\\eta&0}:\Uu\oplus\Vv\longrightarrow (\Uu\oplus\Vv)\otimes \mathcal{K}.
	\end{equation}
	where $\eta^\dagger=q_{\Uu}^{-1}\circ\eta^T\circ q_\Vv\in H^0(\Hom(\Vv,\Uu)\otimes \mathcal{K}).$ 
	Note that 
	\[\Phi^T\mtrx{q_\Uu&\\&-q_\Vv}+\mtrx{q_{\Uu}&\\&-q_\Vv}\Phi=0.\]
	
Appropriate notions of poly-stability exist for $\mathrm{G}$-Higgs bundles \cite{HiggsPairsSTABILITY}. However, for our considerations, the following definition will suffice.

\begin{defi}
Let $(\Ee,\Phi)$ be a $\mathrm{GL}(n,\C)$-Higgs bundle with $\deg(\Ee)=0$ or an $\SL(n,\C)$-Higgs bundle. Then $(\Ee,\Phi)$ is called 
\begin{itemize}
	\item stable if for all proper sub-bundles $\Ff\subset\Ee$ with $\Phi(\Ff)\subset\Ff\otimes \mathcal{K}$ we have $\deg(\Ff)<0.$
	\item polystable if it is direct sum of stable $\mathrm{GL}(n_j,\C)$-Higgs bundles $(\Ee_j,\Phi_j)$ with $\deg(\Ee_j)=0$ for all $j$. 
\end{itemize}
%
	An $\SO_0(2,n+1)$-Higgs bundle is poly-stable if and only if the $\SL(n+3,\C)$-Higgs bundle \eqref{eq:SL(n+3,C)HiggsBundle} is poly-stable. 
\end{defi}
\noindent\textbf{From Higgs bundles to representations.} Poly-stability is equivalent to existence of a Hermitian metric solving certain gauge theoretic equations which we refer to as the {\em self-duality equations}. 
This was proven by Hitchin \cite{hitchinselfduality} for $\SL(2,\C)$ and Simpson \cite{simpsonVHS} for semi-simple complex Lie groups, see \cite{HiggsPairsSTABILITY} for the statement for real reductive groups.

We say that a Hermitian metric $h$ on $\mathcal{E}$ is \emph{adapted} to the $\C$-bilinear symmetric form $q$ if $h(u,v)=q(u,\lambda(v))$ where $\lambda: \mathcal{E} \to \mathcal{E}$ is an anti-linear involution. In such a case, we say that $\lambda$ is the involution associated to the metric $h$.
\begin{theo}
\label{t:Hitchin-Simpson} 
	An $\SO_0(2,n+1)$-Higgs bundle $(\Uu,q_\Uu,\Vv,q_\Vv,\eta)$ is poly-stable if and only if there exist adapted Hermitian metrics $h_\Uu$ and $h_\Vv$ on $\Uu$ and $\Vv$ such that 
	\begin{equation}
 	\label{eq: SO(2,n+1) Higgs bundle Equations}
 	\begin{cases}
 	F_{h_\Uu}+\eta^\dagger\wedge(\eta^\dagger)^{*_h}+\eta^{*_h}\wedge\eta=0 \\
 	F_{h_\Vv}+\eta\wedge\eta^{*_h}+(\eta^\dagger)^{*_h}\wedge \eta^\dagger=0~.
 	\end{cases}
 \end{equation} 
Here $F_{h_\Uu}$ and $F_{h_\Vv}$ denote the curvature of the Chern connections of $h_\Uu$ and $h_\Vv$ and $\eta^{*_h}$ denotes the Hermitian adjoint of $\eta$, i.e. $h_\Vv(u,\eta(v))= h_\Uu(\eta^{*_h}(u),v).$
\end{theo}

If $(h_\Uu,h_\Vv)$ solves the self-duality equations \eqref{eq: SO(2,n+1) Higgs bundle Equations}, then the metric $h=h_\Uu\oplus h_\Vv$ on $\Ee=\Uu\oplus\Vv$ solves the $\SL(n+3,\C)$-self-duality equations
\[
	F_h+[\Phi,\Phi^{*h}]=0.\]
Given a solution $(h_\Uu,h_\Vv)$ to the self-duality equations, the connection
\begin{equation}\label{eq:flat conn of Higgs bundle}
	\nabla=\mtrx{\nabla_{h_\Uu}&\\&\nabla_{h_\Vv}}+\mtrx{0&\eta^\dagger\\\eta&0}+\mtrx{0&\eta^{*_h}\\(\eta^\dagger)^{*_h}&0}
\end{equation}
is a {\em flat} connection on $\Ee=\Uu\oplus\Vv$. Moreover, if $\lambda_\Uu$ and $\lambda_\Vv$ are the associated involutions, $\lambda_\Uu\oplus\lambda_\Vv$ is preserved by $\nabla$.

Denote the associated real bundle by $E_\nabla.$ The orthogonal structure $q_\Uu\oplus -q_\Vv$ restricts to a $\nabla$-parallel signature $(2,n+1)$ metric $g_\Uu\oplus g_\Vv$ on $E_\nabla.$ The holonomy of $\nabla$ gives a representation $\rho: \Gamma \to \SO_0(2,n+1)$ which is completely reducible.

\medskip

\noindent\textbf{From representations to Higgs bundles.} Let $(E_\rho,\nabla,g)$ be the flat rank $(n+3)$ vector bundle with signature $(2,n+1)$ metric $g$ and flat connection $\nabla$ associated to a representation $\rho:\Gamma\to\SO_0(2,n+1)$. A metric on $E_\rho$, 
\[\textbf{h}_\rho:\xymatrix{\widetilde \Sigma\ar[r]&\mathfrak{X}}\]
is equivalent to an orthogonal splitting $E_\rho=U\oplus V$ where $U$ is a rank $2$ orthogonal bundle with $g_U=g|_U$ positive definite and $V$ is a rank $(n+1)$-bundle with $-g_V=-g|_V$ positive definite. 
Moreover, the flat connection $\nabla$ decomposes as
\begin{equation}\label{eq:flatconnection decomp}
	\nabla=\mtrx{\nabla_U&\\&\nabla_V}+\mtrx{&\Psi^\dagger\\\Psi&}
\end{equation}
where $\nabla_U$ and $\nabla_V$ are connections on $U$ and $V$ such that $g_U$ and $g_V$ are covariantly constant, $\Psi$ is a one form valued in the bundle $\Hom(U,V)$ and $\Psi^\dagger=g_U^{-1}\Psi^T g_V$. 

Recall that a point $x\in\mathfrak{X}$ corresponds to an orthogonal splitting $\R^{2,n+1}=\R^{2,0}\oplus\R^{0,n+1}$. In this splitting, a tangent vector $v\in T_x\mathfrak{X}$ is given by an endomorphism 
\[\mtrx{0&a^\dagger\\a&0}:\R^{2,0}\oplus\R^{0,n+1}\to\R^{2,0}\oplus\R^{0,n+1}~.\] 
Since the differential $d\textbf{h}_\rho$ is an equivariant section of $T^*\widetilde\Sigma \otimes \textbf{h}_\rho^* T\mathfrak{X}$, it descends to a one form valued in the bundle $\Hom(U,V)\oplus \Hom(V,U)$ of the form $\alpha+\alpha^\dagger.$
In fact, the differential $d\textbf{h}_\rho$ of the metric is identified with $\Psi+\Psi^\dagger$ (see \cite[Lemma 9.13]{GuichardNUS} for more details).

If $X$ is a Riemann surface structure on $\Sigma$, then the Hermitian extension $h_U\oplus h_V$ of $g_U\oplus-g_V$ to the complexification of $E_\rho$ defines a Hermitian metric. The complex linear extensions of $\nabla_U,\nabla_V,\Psi$ and $\Psi^\dagger$ all decompose into $(1,0)$ and $(0,1)$ parts, and $\nabla_U^{0,1}$ and $\nabla_V^{0,1}$ define holomorphic structures. Writing $\nabla_{U,V}^{0,1}$ for the $(0,1)$-part of the connection on $\Hom(U,V)$ induced by the connections $\nabla_U$ and $\nabla_V$, Proposition~\ref{p-harmonicholomorphic} reads:

\begin{prop}
A metric $\textbf{h}_\rho:\widetilde X\to \mathfrak{X}$ is harmonic if and only if $\nabla_{U,V}^{0,1}\Psi^{1,0}=0,$ (or equivalently $\nabla_{V,U}^{0,1}(\Psi^\dagger)^{1,0}).$
\end{prop}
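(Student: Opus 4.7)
The plan is to deduce the statement from the general Proposition \ref{p-harmonicholomorphic} by matching the pulled-back geometry of the symmetric space $\mathfrak{X}$ with the algebraic data coming from the splitting \eqref{eq:flatconnection decomp}. The key observation is that a reduction of structure group to the maximal compact subgroup is exactly a Cartan decomposition of the flat connection $\nabla$, so that the differential of the corresponding equivariant map $\textbf{h}_\rho$ is already encoded in the off-diagonal piece $\Psi + \Psi^\dagger$.

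First I would unpack the identifications. The Cartan decomposition $\mathfrak{so}(2,n+1) = \mathfrak{k} \oplus \mathfrak{m}$ with $\mathfrak{k} = \mathfrak{so}(2) \oplus \mathfrak{so}(n+1)$ identifies $\mathfrak{m}$ with $\Hom(\R^2,\R^{n+1})$, and at the level of bundles this yields $\textbf{h}_\rho^* T\mathfrak{X} \cong \Hom(U,V)$ as a real vector bundle of rank $2(n+1)$. Under this identification, the real $1$-form $d\textbf{h}_\rho$ becomes the off-diagonal piece $\Psi$ of $\nabla$ read off from the splitting $E_\rho = U \oplus V$. A standard computation for Riemannian symmetric spaces shows that the Levi-Civita connection on $\mathfrak{X}$ is induced from the canonical principal connection of the $K$-bundle $G \to G/K$; pulling back by $\textbf{h}_\rho$, this connection agrees with the connection induced on $\Hom(U,V)$ by the diagonal part $\nabla_U \oplus \nabla_V$ of $\nabla$.

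Next I would complexify and decompose into types. The real decomposition $d\textbf{h}_\rho = \partial\textbf{h}_\rho + \bar\partial\textbf{h}_\rho$ corresponds to $\Psi = \Psi^{1,0} + \Psi^{0,1}$ valued in $\Hom(U,V) \otimes \C$, so $\partial\textbf{h}_\rho$ is identified with $\Psi^{1,0}$. The induced holomorphic structure $\nabla^{0,1}$ on $(T^*X \otimes \textbf{h}_\rho^*T\mathfrak{X})\otimes \C$ becomes the Cauchy--Riemann operator $\nabla_{U,V}^{0,1}$ on $\mathcal{K} \otimes \Hom(U,V)$, where $U$ and $V$ are endowed with the holomorphic structures coming from $\nabla_U^{0,1}$ and $\nabla_V^{0,1}$. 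Substituting into Proposition \ref{p-harmonicholomorphic}, the harmonicity equation $\nabla^{0,1}\partial\textbf{h}_\rho = 0$ becomes $\nabla_{U,V}^{0,1}\Psi^{1,0} = 0$. The equivalent formulation $\nabla_{V,U}^{0,1}(\Psi^\dagger)^{1,0} = 0$ then follows from the definition $\Psi^\dagger = g_U^{-1}\Psi^T g_V$ together with the fact that $\nabla_U$ and $\nabla_V$ preserve $g_U$ and $g_V$: the algebraic operation $\Psi \mapsto \Psi^\dagger$ intertwines $\nabla_{U,V}$ with $\nabla_{V,U}$ and commutes with the type decomposition.

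The only step requiring genuine input is the identification of the pulled-back Levi-Civita connection on $T\mathfrak{X}$ with the diagonal part $\nabla_U \oplus \nabla_V$; once this is in hand, the proposition is a re-writing of Proposition \ref{p-harmonicholomorphic} in the block notation of \eqref{eq:flatconnection decomp}. This identification is a standard fact for reductive homogeneous spaces, and it is the one place where the specific geometry of the symmetric space $\mathfrak{X}$ enters the argument.
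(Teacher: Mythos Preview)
Your proposal is correct and matches the paper's approach exactly: the paper does not give a separate proof of this proposition at all, but simply introduces it with the phrase ``In this case, Proposition~\ref{p-harmonicholomorphic} reads,'' treating it as a direct rewriting of the general harmonicity criterion in the block notation of the splitting $E_\rho = U \oplus V$. Your write-up spells out the identifications (of $\textbf{h}_\rho^*T\mathfrak{X}$ with $\Hom(U,V)$, of $\partial\textbf{h}_\rho$ with $\Psi^{1,0}$, and of the pulled-back Levi-Civita connection with $\nabla_U\oplus\nabla_V$) that the paper leaves implicit in the surrounding text.
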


Given a harmonic metric $\textbf{h}_\rho,$ the Hermitian adjoints of $\Psi^{1,0}$ and $(\Psi^{\dagger})^{0,1}$ are given by $(\Psi^{1,0})^{*}=(\Psi^\dagger)^{0,1}$ and $(\Psi^{\dagger})^{1,0}=(\Psi^{0,1})^*$.
With respect to a harmonic metric, the flatness equations $F_\nabla=0$ decompose as
\begin{equation}\label{eq:flatness EQ harmonic metric}
\left\{\begin{array}{l}
F_{\nabla_U}+\Psi^{1,0}\wedge(\Psi^{1,0})^*+((\Psi^{\dagger})^{1,0})^{*}\wedge (\Psi^{\dagger})^{1,0} = 0 \\
F_{\nabla_V}+(\Psi^{\dagger})^{1,0}\wedge((\Psi^{\dagger})^{1,0})^{*}+(\Psi^{1,0})^{*}\wedge\Psi^{1,0}=0 \\
\nabla_{U,V}^{0,1}\Psi^{1,0}=0
\end{array}\right..
\end{equation}

	Note that setting $\Psi^{1,0}=\eta,$ the self-duality equations \eqref{eq: SO(2,n+1) Higgs bundle Equations} are the same as the decomposition of the flatness equations \eqref{eq:flatness EQ harmonic metric} with respect to a harmonic metric. 
	Thus, if $\Uu$ and $\Vv$ are the holomorphic bundles $(U\otimes \C,\nabla_U^{0,1})$ and $(V\otimes \C,\nabla_V^{0,1}),$ then $(\Uu,q_\Uu,\Vv,q_\Vv,\Psi^{1,0})$ is a poly-stable $\SO_0(2,n+1)$-Higgs bundle, where $q_\Uu$ is the $\C$-linear extension of $g_\U$ to $U\otimes \C$ (similarly for $q_\Vv$).



\begin{prop}\label{p:Minimal Imm Tr(Phi2)=0}
	Let $\rho:\Gamma\to\SO_0(2,n+1)$ be a completely reducible representation and $X$ be a Riemann surface structure on $\Sigma.$
	If $(\Uu,q_\Uu,\Vv,q_\Vv,\eta)$ is the Higgs bundle associated to $\rho$, then the harmonic metric $\textbf{h}_\rho$ is weakly conformal if and only if $\tr(\eta\otimes\eta^\dagger) =0.$
\end{prop}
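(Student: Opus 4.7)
The plan is to combine Proposition~\ref{p:conformal Branched Minimal Imm} with a short matrix computation identifying $\tr(\partial\textbf{h}_\rho \otimes \partial \textbf{h}_\rho)$, up to a nonzero multiplicative constant, with the holomorphic quadratic differential $\tr(\Phi^2) = 2\tr(\eta\,\eta^\dagger)$. First I would use Proposition~\ref{p:conformal Branched Minimal Imm} to reduce the statement to showing that
\[\tr(\partial\textbf{h}_\rho \otimes \partial\textbf{h}_\rho) = 0 \iff \tr(\eta \otimes \eta^\dagger) = 0,\]
where on the left the trace is taken using the pullback of the Riemannian metric on $\mathfrak{X}$ extended $\C$-bilinearly.

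Next I would invoke the dictionary set up in the paragraphs preceding the proposition. There, the real differential $d\textbf{h}_\rho$ is identified with the off-diagonal part $\Psi + \Psi^\dagger$ of the flat connection, so complexifying and taking $(1,0)$-parts yields $\partial \textbf{h}_\rho = \Psi^{1,0} + (\Psi^\dagger)^{1,0} = \eta + \eta^\dagger$, which is exactly the Higgs field $\Phi$ from \eqref{eq:SL(n+3,C)HiggsBundle} once one identifies $T\mathfrak{X}\otimes\C$ with $\Hom(\Uu,\Vv)\oplus\Hom(\Vv,\Uu)$. The Riemannian metric on $\mathfrak{X}$ is (up to a positive multiple) the restriction of the Killing form of $\mathfrak{so}(2,n+1)$ to the reductive summand $\mathfrak{p}$, and a routine computation on matrices of the form $\mtrx{0 & B\\B^T & 0}$ shows that this restricted bilinear form is proportional to $X \mapsto \tr(X^2)$.

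Plugging in then yields, up to a nonzero constant,
\[\tr(\partial\textbf{h}_\rho \otimes \partial\textbf{h}_\rho) \;=\; \tr(\Phi^2) \;=\; \tr(\eta^\dagger \eta) + \tr(\eta\,\eta^\dagger) \;=\; 2\,\tr(\eta\,\eta^\dagger),\]
using the block decomposition $\Phi^2 = \mathrm{diag}(\eta^\dagger\eta,\, \eta\,\eta^\dagger)$ together with cyclicity of the trace. Since $\tr(\eta \otimes \eta^\dagger)$ denotes this same holomorphic quadratic differential on $X$, the proposition follows at once. I do not anticipate a serious obstacle: the only real care needed is to keep track of the identifications between the bundle $\Hom(U,V)\oplus\Hom(V,U)$ and the complexified tangent space $\mathfrak{p}\otimes\C$, and to verify that in this identification $\Psi^{1,0}$ maps to the holomorphic Higgs field $\eta$ while $(\Psi^\dagger)^{1,0}$ maps to $\eta^\dagger$ — both of which are explicit in the preceding discussion.
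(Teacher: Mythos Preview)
Your proposal is correct and follows essentially the same approach as the paper: both invoke Proposition~\ref{p:conformal Branched Minimal Imm}, identify $d\textbf{h}_\rho$ with the off-diagonal block $\Psi+\Psi^\dagger$, take its $(1,0)$-part to obtain the Higgs field $\Phi$, and compute $\tr(\Phi^2)=2\tr(\eta\,\eta^\dagger)$. The paper's version is terser (and in the displayed matrix actually writes $(0,1)$-parts, which is equivalent by conjugation), but the content is the same.
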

\begin{proof}
	The derivative of the harmonic metric is identified with the $1$-form $\Psi+\Psi^\dagger$ from \eqref{eq:flatconnection decomp}. By Proposition \ref{p:conformal Branched Minimal Imm}, $\textbf{h}_\rho$ is weakly conformal if and only if 
	\[\tr\left(\mtrx{0&(\Psi^{\dagger})^{0,1}\\\Psi^{0,1}&0}^2\right)=0.\] 
	This is equivalent to $\tr(\eta\otimes\eta^\dagger)=0.$
\end{proof}
\begin{defi}
An $\SO_0(2,n+1)$-Higgs bundle $(\Uu,q_\Uu,\Vv,q_\Vv,\eta)$ will be  called \emph{conformal} if $\tr(\eta\otimes\eta^\dagger)=0.$
\end{defi}
\subsection{Maximal Higgs bundle parameterizations}\label{Higgsbundleparametrization}
We now describe the Higgs bundles which give rise to maximal $\SO_0(2,n+1)$-representations.

\begin{prop}
The isomorphism class of a $\SO_0(2,n+1)$-Higgs bundle $(\Uu,q_\Uu,\Vv,q_\Vv,\eta)$ is determined by the data $(\Ll,\Vv,q_\Vv,\beta,\gamma)$ where $\Ll$ is a holomorphic line bundle on $X$, $\beta\in H^0(\Ll\otimes\Vv\otimes \mathcal{K})$ and $\gamma\in H^0(\Ll^{-1}\otimes \Vv\otimes \mathcal{K}).$ Here $\Uu=\Ll\oplus \Ll^{-1}$, $q_\Uu = \left(\begin{array}{ll} 0 & 1 \\ 1 & 0 \end{array}\right)$ and $\eta = (\gamma,\beta) : \mathcal{L}\oplus \mathcal{L}^{-1}\to \Vv\otimes \mathcal{K}$. Moreover, if $(\Uu,q_\Uu,\Vv,q_\Vv,\eta)$ is poly-stable, then the Toledo invariant of the corresponding representation is the degree of $\Ll.$ 
\end{prop}
\begin{proof}
The group $\SO(2,\C)$ is isomorphic to the set of $2\times2$ matrices $A$ such that $\det(A)=1$ and $A^T\mtrx{0&1\\1&0}A=\mtrx{0&1\\1&0}$. Such matrices are given by $\mtrx{\lambda&0\\0&\lambda^{-1}}$ for $\lambda\in\C^*.$ Since the bundle $(\Uu,q_\Uu)$ is the associated bundle of a holomorphic principal $\SO(2,\C)$-bundle and the action of $\SO(2,\C)$ on $\C^2$ preserves two lines, up to isomorphism we have 
\[(\Uu,q_\Uu)=\left(\Ll\oplus\Ll^{-1},\mtrx{0&1\\1&0}:\Ll\oplus\Ll^{-1}\to(\Ll\oplus\Ll^{-1})^*\right).\] 
With respect to the splitting $\Uu=\Ll\oplus\Ll^{-1}$, the holomorphic section $\eta\in\Hom(\Uu,\Vv)\otimes \mathcal{K}$ decomposes as $\beta\oplus\gamma$ where $\beta\in \Hom(\Ll\otimes \Vv\otimes \mathcal{K})$ and $\gamma\in \Hom(\Ll^{-1}\otimes\Vv\otimes \mathcal{K})$. 
Using the standard isomorphism $\SO(2)\cong\mathrm{U}(1)$ given by $e^{i\theta}$ and $\C^*\cong\SO(2,\C)$ given by $\lambda\mapsto \mtrx{\lambda&\\&\lambda^{-1}}$, one can see that the degree of $\Ll$ is the degree of the $\SO(2)$-bundle whose complexification is $\Uu.$ Thus, the Toledo invariant of the associated representation is the degree of $\Ll$. 
\end{proof}
\begin{rmk}
The $\SL(n+3,\C)$-Higgs bundle $(\Ee,\Phi)$ associated to $(\Ll,\Vv,q_\Vv,\beta,\gamma)$
is given by $\Ee=\Ll\oplus\Ll^{-1}\oplus\Vv$ and
\begin{equation}\label{eq:betagammaHiggsfield}
	\Phi=\mtrx{0&0&\beta^\dagger\\0&0&\gamma\dagger\\\gamma&\beta&0}:\Ee\to\Ee\otimes \mathcal{K}.
\end{equation} 
\end{rmk}
The Milnor-Wood inequality can be seen directly for poly-stable Higgs bundles.
\begin{prop}\label{p:maximal Higgs bundle Param}
If $(\Ll,\Vv,q_\Vv,\beta,\gamma)$ is a poly-stable $\SO_0(2,n+1)$-Higgs bundle, then $\deg(\Ll)\leq 2g-2$. Furthermore, if $\deg(\Ll)=2g-2,$ then 
\begin{itemize}
	\item $\Vv$ admits a $q_\Vv$-orthogonal decomposition $\Vv=\Ii\oplus\Vv_0$ where $\Vv_0$ is a holomorphic rank $n$ bundle and $\Ii=\Lambda^n\Vv_0$ satisfies $\Ii^2=\Oo.$
	\item $\Ll\cong \mathcal{IK}$  
	\item $\gamma\cong\mtrx{1\\0}:\Ii\mathcal{K}\to \Ii \mathcal{K}\oplus \Vv_0\otimes \mathcal{K}$ and $\beta=\mtrx{q_2\\\beta_0}:\mathcal{K}^{-1}\Ii\to \Ii \mathcal{K}\oplus\Vv_0\otimes \mathcal{K}$ where $q_2\in H^0(\mathcal{K}^2)$ and $\beta_0\in H^0(\mathcal{K}\otimes\Ii\otimes\Vv_0).$
\end{itemize}
\end{prop}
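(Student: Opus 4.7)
The proof splits naturally into the Milnor--Wood bound $\deg(\Ll) \leq 2g-2$ and the structural description of the maximal case. Both parts hinge on a single object: the holomorphic section
\[
q_\Vv(\gamma,\gamma)\;\in\;H^0\!\left(\Ll^{-2}\otimes \mathcal{K}^2\right)
\]
obtained by contracting $\gamma\otimes\gamma$ with $q_\Vv$. The strategy is to use polystability of the associated $\SL(n+3,\C)$-Higgs bundle $(\Ee,\Phi)$ to force this section, or its vanishing, to constrain the data.

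To prove Milnor--Wood, I would assume $\deg(\Ll)>2g-2$ and construct a $\Phi$-invariant subbundle of strictly positive degree. Under this assumption, $\Ll^{-2}\otimes \mathcal{K}^2$ has negative degree, so $q_\Vv(\gamma,\gamma)\equiv 0$, i.e.\ the image of $\gamma$ is $q_\Vv$-isotropic. If $\gamma\equiv 0$, then $\Ll\subset \Ee$ is already $\Phi$-invariant with $\deg(\Ll)>0$, contradicting polystability. Otherwise, let $N\subset \Vv$ be the saturation of the image of $\gamma:\Ll\otimes \mathcal{K}^{-1}\to \Vv$; it is an isotropic line subbundle with $\deg(N)\geq \deg(\Ll)-(2g-2)$. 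I then claim the subbundle $\Ff := \Ll \oplus N^\perp$ is $\Phi$-invariant: $\Phi(\Ll)=\gamma(\Ll)\subset N\otimes \mathcal{K}\subset N^\perp\otimes \mathcal{K}$ because $N\subset N^\perp$ by isotropy, while the only component of $\Phi|_{N^\perp}$ that could escape $\Ff$ is $\gamma^\dagger$, which vanishes on $N^\perp = \ker(\gamma^\dagger)$. Since $\det(\Vv)=\Oo$ gives $\deg(N^\perp)=\deg(N)$, one gets $\deg(\Ff)\geq 2\deg(\Ll)-(2g-2)>2g-2>0$, contradicting polystability.

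For the maximal case $\deg(\Ll)=2g-2$, the same construction rules out both $\gamma\equiv 0$ and $q_\Vv(\gamma,\gamma)\equiv 0$: in either situation one recovers a $\Phi$-invariant subbundle of degree $\geq 2g-2>0$. Hence $q_\Vv(\gamma,\gamma)$ is a nonzero section of the now degree-zero line bundle $\Ll^{-2}\otimes \mathcal{K}^2$, forcing $\Ll^{-2}\otimes \mathcal{K}^2\cong \Oo$. Setting $\Ii := \Ll\otimes \mathcal{K}^{-1}$, I obtain $\Ii^2\cong \Oo$ and $\Ll\cong \mathcal{K}\Ii$. The saturation $N$ of $\gamma(\Ll\otimes \mathcal{K}^{-1})=\gamma(\Ii)$ is now $q_\Vv$-non-degenerate; the bounds $\deg(N)\geq \deg(\Ii)=0$ (saturation) and $\deg(N)\leq 0$ (from the nonzero section of $N^{-2}$ provided by $q_\Vv|_N$) yield $\deg(N)=0$, and then $\gamma:\Ii\to N$ is an isomorphism between line bundles of the same degree. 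The orthogonal splitting $\Vv=N\oplus N^\perp=:\Ii\oplus \Vv_0$ is forced, and $\det(\Vv)=\Oo$ together with $\Ii^2\cong \Oo$ gives $\Lambda^n \Vv_0\cong \Ii$. By construction $\gamma$ lands entirely in the $\Ii$-factor and normalizes to $\bigl(\begin{smallmatrix}1\\0\end{smallmatrix}\bigr)$, while $\beta$ decomposes along $\Ii\oplus \Vv_0$ into a scalar part $q_2\in H^0(\mathcal{K}^2)$ and a $\Vv_0$-valued part $\beta_0$.

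The delicate point in the whole argument is the $\Phi$-invariance of $\Ff=\Ll\oplus N^\perp$: it uses \emph{both} the isotropy $N\subset N^\perp$ and the identification $\ker(\gamma^\dagger)=N^\perp$, which jointly fail precisely when $q_\Vv(\gamma,\gamma)\not\equiv 0$. This is exactly the mechanism by which polystability allows the maximal value $\deg(\Ll)=2g-2$ to be attained while simultaneously imposing the rigid algebraic structure described in the conclusion.
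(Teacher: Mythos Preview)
Your proof is correct and follows essentially the same route as the paper's: the invariant subbundle you call $\Ll\oplus N^\perp$ is precisely the paper's $\Ll\oplus\ker(\gamma^\dagger)$, and your key section $q_\Vv(\gamma,\gamma)\in H^0(\Ll^{-2}\mathcal{K}^2)$ is the paper's composition $\gamma^\dagger\circ\gamma$. The only cosmetic difference is that the paper first notes the sharper intermediate bound $\deg(\Ll)\leq g-1$ under isotropy and then concludes non-isotropy for $\deg(\Ll)>g-1$, whereas you argue directly by contradiction from $\deg(\Ll)>2g-2$; both arrive at the maximal case in the same way, with $\gamma^\dagger\circ\gamma$ a nowhere-vanishing section forcing $\gamma$ itself to be nowhere vanishing.
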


\begin{proof}
The poly-stable $\SL(n+3,\C)$ Higgs bundle $(\Ee,\Phi)$ associated to $(\Ll,\Vv,q_\Vv,\beta,\gamma)$ 
has $\Ee=\Ll\oplus\Ll^{-1}\oplus\Vv$ and $\Phi$ is given by \eqref{eq:betagammaHiggsfield}. 
If $\deg(\Ll)>0$, then by poly-stability $\gamma\neq0.$ If the image of $\gamma$ is isotropic, then we have a sequence
\[\xymatrix{&0\ar[r]&\Ll \mathcal{K}^{-1}\ar[r]^\gamma&\ker(\gamma^\dagger)\ar[r]&\Vv\ar[r]^{\gamma^\dagger}&\Ll^{-1}\mathcal{K}\ar[r]&0}.\]
In particular, this implies that $\deg(\ker(\gamma^\dagger))=\deg(\Ll)-(2g-2)$. Since $\Ll\oplus \ker(\gamma^\dagger)$ is a $\Phi$-invariant sub-bundle, we have
$\deg(\Ll)\leq g-1.$ 
Thus, for $\deg(\Ll)>g-1$ the composition $\gamma\circ\gamma^\dagger$ is a non-zero element of $H^0((\Ll^{-1}\mathcal{K})^2)$, and we conclude that $\deg(\Ll)\leq 2g-2.$ 

If $\deg(\Ll)=2g-2,$ then $(\Ll^{-1}\mathcal{K})^2=\Oo$ and $\gamma$ is nowhere vanishing. Set $\Ii=\Ll \mathcal{K}^{-1}$, then $\Ll=\Ii \mathcal{K}$ and $\Ii$ defines an orthogonal line sub-bundle of $\Vv$. 
Taking the $q_\Vv$-orthogonal complement of $\Ii$ gives a holomorphic decomposition $\Vv=\Ii\oplus(\Ii)^\perp$. 
Since $\Lambda^{n+1}\Vv=\Oo,$ we conclude $\Vv=\Ii\oplus\Vv_0$ where $\Ii=\Lambda^n\Vv_0.$
Since the image of $\gamma$ is identified with $\Ii,$ we can take $\gamma\cong\mtrx{1\\0}:\mathcal{IK}\to \Ii \mathcal{K}\oplus \Vv_0\otimes \mathcal{K}$. 
Finally, the holomorphic section $\beta$ of $\Hom(\Ii \mathcal{K}^{-1},\Ii\oplus\Vv_0)\otimes \mathcal{K}$ decomposes as
\[\beta= q_2\oplus \beta_0\] 
where $q_2$ is a holomorphic quadratic differential and $\beta_0\in H^0(\Vv_0\otimes\Ii \mathcal{K})$
\end{proof}
\begin{rmk}
Higgs bundles with $\deg(\Ll)=2g-2$ will be called maximal Higgs bundles. They are determined by tuples $(\Vv_0,q_{\Vv_0},\beta_0,q_2)$ from Proposition \ref{p:maximal Higgs bundle Param}.
\end{rmk}

\begin{prop}
	If $\rho:\Rep^{max}(\Gamma,\SO_0(2,n+1))$ is a maximal representation, $X$ is a Riemann surface structure on $\Sigma$ and the Higgs bundle corresponding to $\rho$ is defined by the data $(\Vv_0,q_{\Vv_0},\beta_0,q_2),$ then the harmonic metric
	is a conformal immersion if and only if the holomorphic quadratic differential $q_2$ vanishes. 
\end{prop}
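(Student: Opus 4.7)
The plan is to invoke Proposition \ref{p:Minimal Imm Tr(Phi2)=0}, which reduces the (branched) minimality of the harmonic metric $\mathbf{h}_\rho$ to the vanishing of the quadratic differential $\tr(\eta \otimes \eta^\dagger) \in H^0(\mathcal{K}^2)$, and then to compute this differential explicitly from the maximal parametrization of Proposition \ref{p:maximal Higgs bundle Param}.

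With the splitting $\Uu = \Ll \oplus \Ll^{-1}$ and the hyperbolic form $q_\Uu$ swapping the two factors, the decomposition $\eta = (\gamma, \beta)$ yields an adjoint $\eta^\dagger$ whose components $\beta^\dagger \colon \Vv \to \Ll \otimes \mathcal{K}$ and $\gamma^\dagger \colon \Vv \to \Ll^{-1} \otimes \mathcal{K}$ are the partial pairings $q_\Vv(-,\beta)$ and $q_\Vv(-,\gamma)$, consistent with the notation in \eqref{eq:betagammaHiggsfield}. A direct computation of $\eta^\dagger \circ \eta \colon \Uu \to \Uu \otimes \mathcal{K}^2$, using the symmetry of $q_\Vv$, gives $\tr(\eta^\dagger \circ \eta) = 2\, q_\Vv(\gamma, \beta)$. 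I would then plug in the $q_\Vv$-orthogonal decomposition $\Vv = \Ii \oplus \Vv_0$ with $\gamma = (1, 0)$ and $\beta = (q_2, \beta_0)$: orthogonality of $\Ii$ and $\Vv_0$ kills the $\beta_0$-contribution, and what remains is $q_\Ii(1, q_2)$, which identifies canonically with $q_2 \in H^0(\mathcal{K}^2)$ via the trivialization $\Ii^{\otimes 2} \cong \Oo$ supplied by $q_\Ii$.

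Combining the two steps shows that $\tr(\eta \otimes \eta^\dagger)$ is $q_2$ up to a nowhere-vanishing multiplicative constant, so Proposition \ref{p:Minimal Imm Tr(Phi2)=0} yields the equivalence between $\mathbf{h}_\rho$ being a branched minimal immersion and $q_2 = 0$. To upgrade ``branched minimal immersion'' to ``immersion'' (the term used in the statement) when $q_2 = 0$, I would observe that the branch points of $\mathbf{h}_\rho$ are precisely the zeros of the holomorphic section $\partial \mathbf{h}_\rho$, which corresponds to $\eta \in H^0(\Hom(\Uu, \Vv) \otimes \mathcal{K})$ under the non-abelian Hodge correspondence; since the component $\gamma$ contains the nowhere-vanishing identity map $\Ii \mathcal{K} \to \Ii \mathcal{K}$, $\eta$ itself is nowhere zero. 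The only real effort in this plan is the bookkeeping of the twists $\Ll = \Ii \mathcal{K}$, $\Ii^2 = \Oo$, and $\mathcal{K}$ in the explicit computation of $q_\Vv(\gamma, \beta)$; I do not anticipate any conceptual obstacle.
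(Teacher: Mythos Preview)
Your proposal is correct and follows essentially the same route as the paper: both invoke Proposition~\ref{p:Minimal Imm Tr(Phi2)=0}, compute $\tr(\eta\otimes\eta^\dagger)=2q_2$ from the explicit form of $\eta$ and $\eta^\dagger$ in the maximal parametrization, and then observe that $\eta$ is nowhere vanishing (because of the identity component $\gamma=(1,0)$) to rule out branch points. The only cosmetic difference is that the paper writes $\eta$ and $\eta^\dagger$ as $2\times 2$ matrices and reads off the trace directly, whereas you phrase the same computation as $\tr(\eta^\dagger\circ\eta)=2\,q_\Vv(\gamma,\beta)$ and then use the orthogonal splitting $\Vv=\Ii\oplus\Vv_0$; the content is identical.
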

\begin{proof}
By Proposition \ref{p:Minimal Imm Tr(Phi2)=0}, the harmonic metric associated to a poly-stable Higgs bundle $(\Uu,q_\Uu,\Vv,q_\Vv,\eta)$ is weakly conformal if and only if $\tr(\eta\otimes\eta^\dagger)=0.$ For a maximal Higgs bundle determined by $(\Vv_0,q_{\Vv_0},\beta_0,q_2)$
\[\eta=\mtrx{1&0\\q_2&\beta_0}:\Ii \mathcal{K}\oplus \Ii \mathcal{K}^{-1}\to\Ii \mathcal{K}\oplus\Vv_0 \mathcal{K}\ \ \ \ \ \text{and}\ \ \ \ \ \eta^\dagger=\mtrx{ q_2&\beta_0^\dagger\\1&0}:\Ii\oplus\Vv_0\to\Ii \mathcal{K}^2\oplus \Ii.\]
A computation shows $\tr(\eta\otimes\eta^\dagger)=2q_2$, thus, by Proposition \ref{p:Minimal Imm Tr(Phi2)=0}, the harmonic map is weakly conformal if and only if $q_2=0.$  
Finally, $\eta+\eta^\dagger$ is nowhere vanishing, hence the harmonic metric has no branch point.   
\end{proof}
\begin{rmk}
For any Hermitian group $G$, the Higgs bundles associated to maximal representations always have a nowhere vanishing Higgs field (see for example \cite{MaxRepsHermSymmSpace}). Thus, by the argument above, every weakly conformal harmonic metric associated to a maximal representation is an immersion, i.e. it is unbranched. 
\end{rmk}

Given a maximal representation $\rho\in\Rep(\Gamma,\SO_0(2,n+1)),$ by Proposition \ref{p:Labourie Existence}, we can always find a Riemann surface structure in which the corresponding Higgs bundle is a maximal conformal Higgs bundle. A maximal conformal Higgs bundle is determined by $(\Vv_0,q_{\Vv_0},\beta_0)$:
\[(\Uu,q_\Uu,\Vv,q_\Vv,\eta)=\left(\mathcal{IK}\oplus \Ii\mathcal{K}^{-1},\ \mtrx{0&1\\1&0},\ \Ii\oplus\Vv_0,\ \mtrx{1&0\\0&q_{\Vv_0}},\ \mtrx{1&0\\0&\beta_0}\right).\]
The associated $\SL(n+3,\C)$-Higgs bundle will be represented schematically by 
	\[
	\xymatrix@R=-.2em{\Ii\mathcal{K}\ar[r]^1&\Ii\ar[r]^1&\Ii\mathcal{K}^{-1}\ar[ddl]^{\beta_0}\\&\oplus&\\&\Vv_0\ar[uul]^{\beta_0^\dagger}&}\]
where the arrows represent the Higgs field and we omit the tensor product by $\mathcal{K}$. Such a Higgs bundle is an example of a {\em cyclic} Higgs bundle. 
\begin{defi} \label{d:CyclicHiggsBundle}
An $\SL(n,\C)$-Higgs bundle $(\Ee,\Phi)$ is called \emph{cyclic of order $k$} if there is a holomorphic splitting $\Ee = \Ee_1\oplus \ldots \oplus \Ee_k$ such that $\Phi$ maps $\Ee_i$ into $\Ee_{i+1}\otimes \mathcal{K}$ (for $i < k$) and $\Ee_k$ to $\Ee_1\otimes \mathcal{K}$. 
\end{defi}

\begin{prop}[Simpson, \cite{KatzMiddleInvCyclicHiggs}]\label{p:CyclicOrthogonalSplitting}
If the Higgs bundle $(E,\Phi)$ is cyclic of order $k$, then the cyclic splitting of $\Phi$ is orthogonal with respect to the Hermitian metric $h$ which solves the self-duality equations $F_h+[\Phi,\Phi^{*h}]=0$.
\end{prop}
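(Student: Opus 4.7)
The plan is to exploit the natural $\mathbb{Z}/k$-symmetry of a cyclic Higgs bundle together with the uniqueness of the harmonic metric provided by the Hitchin--Simpson correspondence. First I would fix a primitive $k$-th root of unity $\zeta = e^{2\pi i/k}$ and introduce the holomorphic bundle automorphism $g : \Ee \to \Ee$ that acts on the $i$-th summand $\Ee_i$ as multiplication by $\zeta^i$. Because $\Phi$ sends $\Ee_i$ into $\Ee_{i+1}\otimes\mathcal{K}$ (indices taken modulo $k$), a direct computation yields $g\,\Phi\,g^{-1} = \zeta\Phi$, so $g$ is a holomorphic isomorphism between the Higgs bundles $(\Ee,\Phi)$ and $(\Ee,\zeta\Phi)$.

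Next I would observe that the Higgs equations are insensitive to multiplying $\Phi$ by a unit complex scalar: for any Hermitian metric $H$, the adjoint satisfies $(\zeta\Phi)^{*H} = \bar\zeta\,\Phi^{*H}$, so $[\zeta\Phi,(\zeta\Phi)^{*H}] = |\zeta|^2\,[\Phi,\Phi^{*H}] = [\Phi,\Phi^{*H}]$, while $F_H$ is unchanged. Combined with the naturality of the Higgs equations under holomorphic bundle isomorphisms, this implies: if $H$ solves $F_H + [\Phi,\Phi^{*H}] = 0$, then $g^{*}H$ solves the same equation for $(\Ee,\Phi)$.

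By the uniqueness part of Hitchin--Simpson, the solution of the Higgs equations on a stable Higgs bundle is determined up to a positive real scalar, so $g^{*}H = cH$ for some $c>0$. Iterating $g$ and using $g^{k} = \mathrm{id}_\Ee$ gives $c^{k} = 1$, whence $c = 1$. Thus $H$ is $g$-invariant. For $x \in \Ee_i$ and $y \in \Ee_j$ this reads $H(x,y) = H(gx,gy) = \zeta^{\,i-j}\,H(x,y)$, so $H(x,y) = 0$ whenever $i \not\equiv j \pmod k$; this is exactly the orthogonality of the cyclic splitting.

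I expect the main technical obstacle to be the polystable (non-stable) case, where uniqueness of the harmonic metric holds only modulo the automorphism group of $(\Ee,\Phi)$. The remedy is to decompose $(\Ee,\Phi)$ into its canonical direct sum of stable Higgs sub-bundles, observe that $g$ must permute these stable factors, refine the decomposition so that $g$ preserves each factor, and then apply the stable argument on each factor. A secondary caveat is to verify the indexing convention for cyclic Higgs bundles of order $k = n$ with $\Phi \colon \Ee_k \to \Ee_1 \otimes \mathcal{K}$; the same calculation $g\Phi g^{-1} = \zeta\Phi$ goes through unchanged because $\zeta^{1-k} = \zeta$.
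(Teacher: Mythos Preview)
Your argument is correct and is essentially the standard proof of this fact. Note, however, that the paper does not actually supply its own proof of this proposition: it is stated with attribution to Simpson and used as a black box. So there is nothing to compare against; your write-up simply fills in the omitted argument in the expected way (exploit the $\mathbb{Z}/k$-gauge symmetry $g=\zeta^i$ on $\Ee_i$, observe that $g$ intertwines $(\Ee,\Phi)$ with $(\Ee,\zeta\Phi)$, use invariance of the Higgs equations under $\Phi\mapsto\zeta\Phi$, and conclude by uniqueness of the harmonic metric).

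One small remark on your handling of the polystable case: your suggested fix---decompose into stable summands and note that $g$ permutes them---works, but you should also use that the harmonic metric on a polystable Higgs bundle can be (and is, in the Hitchin--Simpson theorem) taken to make the stable summands mutually orthogonal. Then the permutation action of $g$ on summands, together with the stable case applied to each $g$-orbit, gives the result. This is routine, but worth saying explicitly since otherwise the passage from ``$g^*H$ and $H$ both solve the equations'' to ``$g^*H=H$'' has a genuine gap in the strictly polystable case.
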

The symmetries of the solution metrics \eqref{eq: SO(2,n+1) Higgs bundle Equations} and Proposition \ref{p:CyclicOrthogonalSplitting} give a further simplification of the self-duality equations for maximal conformal $\SO_0(2,n+1)$-Higgs bundles.
\begin{prop}\label{p:MaximalConformalHiggsbundleEQ}
	For a poly-stable maximal conformal $\SO_0(2,n+1)$-Higgs bundle determined by $(\Vv_0,q_{\Vv_0},\beta_0)$, if $(h_\Uu,h_\Vv)$ solves the self-duality equations \eqref{eq: SO(2,n+1) Higgs bundle Equations}, then 
	\begin{itemize}
	 	\item $h_\Uu=\mtrx{h_{\Ii \mathcal{K}}&\\&h_{\Ii \mathcal{K}}^{-1}}$ where $h_{\Ii \mathcal{K}}$ is a metric on $\Ii \mathcal{K}$ and $h_{\Ii \mathcal{K}}^{-1}$ is the induced metric on $\Ii \mathcal{K}^{-1}$
	 	\item $h_{\Vv}=\mtrx{h_\Ii&\\&h_{\Vv_0}}$ where $h_\Ii$ is a flat metric on $\Ii$ and $h_{\Vv_0}$ is a metric on $\Vv_0$ adapted to $q_{\Vv_0}$.
	 \end{itemize} 
	 Furthermore, the self-duality equations \eqref{eq: SO(2,n+1) Higgs bundle Equations} simplify 
	 \begin{equation*}
	 	\label{eq:CycliHiggsBundleEQ}
	 	\left\{\begin{array}{l}
	 	F_{h_{\Ii \mathcal{K}}}+\beta_0^\dagger\wedge (\beta_0^\dagger)^{*_h}+1^{*_h}\wedge 1=0 \\
	 	F_{\Vv_0}+\beta_0\wedge\beta_0^{*_h}+(\beta_0^\dagger)^{*_h}\wedge\beta_0^\dagger=0
	 	\end{array}\right.
	 \end{equation*}
 \end{prop}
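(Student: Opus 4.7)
The plan is to exploit the cyclic structure of the associated $\SL(n+3,\C)$-Higgs bundle.

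First, I verify that the holomorphic splitting
\[\Ee = \mathcal{K}\Ii \oplus \Ii \oplus \mathcal{K}^{-1}\Ii \oplus \Vv_0\]
is cyclic for $\Phi$: reading off the entries of $\Phi = \eta + \eta^\dagger$, the ``$1$'' in $\eta$ sends $\mathcal{K}\Ii$ into $\Ii \otimes \mathcal{K}$; the ``$1$'' in $\eta^\dagger$, arising from the anti-diagonal form of $q_\Uu$, sends $\Ii$ into $\mathcal{K}^{-1}\Ii \otimes \mathcal{K}$; the section $\beta_0$ sends $\mathcal{K}^{-1}\Ii$ into $\Vv_0 \otimes \mathcal{K}$; and $\beta_0^\dagger$ sends $\Vv_0$ back into $\mathcal{K}\Ii \otimes \mathcal{K}$. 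Hence the $\SL(n+3,\C)$-Higgs bundle is cyclic of order $4$, and Proposition \ref{p:CyclicOrthogonalSplitting} implies that the solution metric $h$ is diagonal with respect to this splitting. In particular $h_\Uu = h_{\mathcal{K}\Ii} \oplus h_{\mathcal{K}^{-1}\Ii}$ and $h_\Vv = h_{\Ii} \oplus h_{\Vv_0}$.

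Next, I would impose the $\SO_0(2,n+1)$-compatibility conditions $\overline{h_\Uu^{-1}} q_\Uu = \overline{q_\Uu^{-1}} h_\Uu$ and $\overline{h_\Vv^{-1}} q_\Vv = \overline{q_\Vv^{-1}} h_\Vv$ on this block-diagonal form. Since $q_\Uu$ is purely anti-diagonal on $\mathcal{K}\Ii \oplus \mathcal{K}^{-1}\Ii$, the first condition collapses to the single identity $h_{\mathcal{K}^{-1}\Ii} = h_{\mathcal{K}\Ii}^{-1}$, so the metric on $\mathcal{K}^{-1}\Ii$ is dual to $h_{\mathcal{K}\Ii}$. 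The second condition, being block-diagonal, reduces on the $\Ii$-factor (where $q_\Ii$ is the canonical isomorphism $\Ii \xrightarrow{\sim} \Ii^*$ coming from $\Ii^2 = \Oo$) to a condition forcing $h_\Ii$ to be flat, and on the $\Vv_0$-factor it reproduces $\overline{h_{\Vv_0}^{-1}} q_{\Vv_0} = \overline{q_{\Vv_0}^{-1}} h_{\Vv_0}$.

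Finally, I substitute these structures into \eqref{eq: SO(2,n+1) Higgs bundle Equations}. Because the metrics, $\eta$, and $\eta^\dagger$ are diagonal with respect to the cyclic refinement of $\Uu \oplus \Vv$, the Hermitian adjoints $\eta^{*_h}$ and $(\eta^\dagger)^{*_h}$ inherit the same diagonal matrix form, so the wedge products in \eqref{eq: SO(2,n+1) Higgs bundle Equations} remain block-diagonal on $\Uu$ and $\Vv$. The $\mathcal{K}\Ii$-block of the $\Uu$-equation yields the first line of \eqref{eq:CycliHiggsBundleEQ}, while its $\mathcal{K}^{-1}\Ii$-block is the dual equation, automatically satisfied via $h_{\mathcal{K}^{-1}\Ii} = h_{\mathcal{K}\Ii}^{-1}$. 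The $\Vv_0$-block of the $\Vv$-equation gives the second line of \eqref{eq:CycliHiggsBundleEQ}, while the $\Ii$-block collapses to $F_{h_\Ii} = 0$, which holds since $h_\Ii$ is flat. The main care needed in the argument is tracking how the $\eta$- and $\eta^\dagger$-wedge terms distribute among the four cyclic summands: cross contributions that could a priori produce off-diagonal blocks vanish precisely because $q_\Uu$ swaps the two line-bundle factors of $\Uu$ while the cyclic ordering interleaves $\eta$ with $\eta^\dagger$.
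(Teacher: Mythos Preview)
Your proof is correct and uses the same two ingredients as the paper—the compatibility conditions from Theorem~\ref{t:Hitchin-Simpson} and Simpson's orthogonality for cyclic Higgs bundles (Proposition~\ref{p:CyclicOrthogonalSplitting})—only in the opposite order: you invoke cyclicity first to diagonalize $h$ into four blocks and then impose compatibility, whereas the paper uses compatibility alone to split $h_\Uu$ and cyclicity only for $h_\Vv$. The one substantive difference is how you obtain flatness of $h_\Ii$: you read it off directly from the compatibility condition on the $\Ii$-factor (correctly, since $q_\Ii$ is a nowhere-vanishing holomorphic section of $(\Ii^*)^2\cong\Oo$, forcing $h_\Ii=|q_\Ii|$, whose curvature vanishes), while the paper instead writes out the $\Ii$-block of the Higgs bundle equation and observes that it reads $F_{h_\Ii}+1\wedge1^{*_h}+1^{*_h}\wedge1=0$ with the last two terms cancelling.
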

\begin{proof}
	Because $h_\Uu$ is adapted to $q_\Uu$, $h_\Uu=\mtrx{h_{\Ii \mathcal{K}}&\\&h_{\Ii \mathcal{K}}^{-1}}$ where $h_{\Ii \mathcal{K}}$ is a metric on $\Ii \mathcal{K}$ and $h_{\Ii \mathcal{K}}^{-1}$ is the induced metric on $\Ii \mathcal{K}^{-1}$.
	The splitting of $h_\Vv$ follows from Proposition \ref{p:CyclicOrthogonalSplitting}.
	The self-duality equations \eqref{eq: SO(2,n+1) Higgs bundle Equations} with $\eta=\mtrx{1&0\\0&\beta_0}$ and $h_\Uu=h_{\Ii \mathcal{K}}\oplus h_{\Ii \mathcal{K}}^{-1}$ and $h_\Vv=h_\Ii\oplus h_{\Vv_0}$ simplify to
\[	\left\{ \begin{array}{l}
	F_{h_{\Ii \mathcal{K}}}+\beta_0^\dagger\wedge (\beta_0^\dagger)^{*_h}+1^{*_h}\wedge 1=0\\
	F_{h_{\Ii \mathcal{K}}^{-1}}+1\wedge1^{*_h}+\beta_0^{*_h}\wedge \beta_0=0 \\
	F_{h_\Ii}+1\wedge1^{*_h}+1^{*_h}\wedge 1=0 \\
    F_{\Vv_0}+\beta_0\wedge\beta_0^{*_h}+(\beta_0^\dagger)^{*_h}\wedge\beta_0^\dagger=0
	\end{array}\right.\]
	Note that the first two equations are the same and the third equation implies the metric $h_{\Ii}$ is flat.
\end{proof}
For a poly-stable maximal conformal $\SO_0(2,n+1)$-Higgs bundle determined by $(\Vv_0,q_{\Vv_0},\beta_0)$, the flat connection \eqref{eq:flat conn of Higgs bundle} on $\Ee=\Ii\mathcal{K}\oplus \Ii\mathcal{K}^{-1}\oplus\Ii\oplus\Vv_0$ decomposes as 
\begin{equation}
	\label{eq flat conn hol decomp}\mtrx{\nabla_{h_{\Ii\mathcal{K}}}&0&1^{*_h}&\beta_0^\dagger\\0&\nabla_{h_{\Ii\mathcal{K}^{-1}}}&1&\beta_0^{*_h}\\1&1^{*_h}&\nabla_{h_{\Ii}}&0\\(\beta_0^\dagger)^{*_h}&\beta_0&0&\nabla_{h_{\Vv_0}}}~.
\end{equation}
The associated flat bundle $E_\rho\subset (\Ii \mathcal{K}\oplus \Ii \mathcal{K}^{-1}\oplus \Ii\oplus\Vv_0)$ is the fixed point locus of the associated anti-linear involution $\lambda: \mathcal{E} \to \mathcal{E}$, that is the involution defined by the equation $h(u,v)=q(u,\lambda(v))$.

In the splitting $\mathcal{E}= \mathcal{IK}\oplus \mathcal{IK}^{-1}\oplus \Ii \oplus \Vv_0$, the $\mathbb{C}$-bilinear form $q$ is given by
\begin{equation}
	\label{eq:q on Ee}q=\mtrx{&1&&\\1&&&\\&&-1&\\&&&-q_{\Vv_0}}~.
\end{equation}
By Proposition \ref{p:MaximalConformalHiggsbundleEQ}, the previous splitting is orthogonal with respect to the Hermitian metric solving the self-duality equations. In particular, one easily checks that the associated involution $\lambda: \Ii \mathcal{K}\oplus \Ii \mathcal{K}^{-1}\oplus \Ii\oplus\Vv_0 \to \Ii \mathcal{K}\oplus \Ii \mathcal{K}^{-1}\oplus \Ii\oplus\Vv_0$ is written
\[\lambda=\mtrx{&h_{\Ii \mathcal{K}}^{-1}&&\\h_{\Ii \mathcal{K}}&&&\\ &&-h_\Ii&\\&&&-\lambda_{\Vv_0}},\]
where $h_{\mathcal{IK}}(u)$ is the anti-linear map defined by $h_{\mathcal{IK}}(u).v=h_{\mathcal{IK}}(u,v)$.

Thus the flat bundle $E_\rho=U\oplus V$ of a maximal representation decomposes further. This decomposition will play an essential role in the rest of the paper. 

\begin{theo}\label{p:decompositionbundle}
The flat bundle associated to a poly-stable maximal conformal $\SO_0(2,n+1)$-Higgs bundle determined by  $(\Vv_0,q_{\Vv_0},\beta_0)$ decomposes orthogonally as
\[E_\rho=U\oplus\ell\oplus V_0\] where $U\subset\Uu$ is a positive definite rank two sub-bundle, 
	 $\ell\subset\Ii$ is a negative definite line sub-bundle and $V_0\subset \Vv_0$ is a negative definite rank $n$ bundle. Using the decomposition of \eqref{eq flat conn hol decomp}, in this splitting the flat connection is given by
	 \[\nabla=\mtrx{\nabla_{h_\Uu}&1+1^{*_h}&\beta_0^\dagger+\beta_0^{*_h}\\1+1^{*_h}&\nabla_{h_\Ii}&0\\\beta_0+(\beta_0^\dagger)^{*_h}&0&\nabla_{h_{\Vv_0}}}~.\]
       \end{theo}


The following consequence will be useful in the next subsection:
\begin{coro} \label{c:nablal}
Let $u$ be a section of $\ell$ such that $q(u) = -1$. Then $\nabla u$ gives a ($\R$-linear) isomorphism between $T X$ and $U$.
\end{coro}

\begin{proof}
Since the connection $\nabla_{h_\mathcal{I}}$ on $\mathcal{I}$ preserves $\ell$ and $q_{\vert \ell}$, $u$ is parallel for $\nabla_{h_\mathcal{I}}$. Thus $\nabla u = (1+1^{*_h})u$ which is a $1$-form with values in $U$. The notation ``$1$'' means that (by definition) the $(1,0)$-part of this $1$-form never vanishes, implying that $\nabla u : T X \to U$ is injective at every point. It is thus an isomorphism.
\end{proof}

\begin{rmk}
We will see in Section \ref{ss:extremalsurfaces} that $\nabla u : T X \to (U,q_{\vert U})$ is conformal. 
\end{rmk}

From now on we will only consider poly-stable maximal $\SO_0(2,n+1)$-Higgs bundles. For notational convenience, we will drop the subscript $0$ and write the decomposition of the flat bundle $E_\rho$ as $E_\rho=U\oplus \ell\oplus V.$

\subsection{Connected components of maximal representations}
Given a maximal $\SO_0(2,n+1)$-Higgs bundle 
\begin{equation}\label{eq: SO(2,n+1) Higgs}
	\xymatrix@R=0em{\Ii\mathcal{K}\ar[r]^1&\Ii\ar[r]^1&\Ii\mathcal{K}^{-1}\ar[ddl]^{\beta}\\&\oplus&\\&\Vv\ar[uul]^{\beta^\dagger}&},
\end{equation}
the Stiefel-Whitney classes $sw_1\in H^1(\Sigma,\Z/2)$ and $sw_2\in H^2(\Sigma,\Z/2)$ of $\Vv$ define characteristic classes which help distinguish the connected components of maximal Higgs bundles. 
Thus, the space of maximal representations decomposes as 
\[\Rep^{max}(\Gamma,\SO_0(2,n+1))=\bigsqcup_{\substack{sw_1\in H^1(\Sigma,\Z/2)\\sw_2\in H^2(\Sigma,\Z/2)}} \Rep^{max}_{sw_1,sw_2}(\Gamma,\SO_0(2,n+1))\]
where $\Rep^{max}_{sw_1,sw_2}(\Gamma,\SO_0(2,n+1))$ is the set of maximal representations such that the Stiefel-Whitney classes of the bundle $\Vv$ are $sw_1$ and $sw_2$. 

\begin{rmk}\label{rmk: Fuchsian higgs bundle}
	Note that when $n=0,$ maximal conformal $\SO_0(2,1)$-Higgs bundles are given by 
	\[\xymatrix{\mathcal{K}\ar[r]^1&\mathcal O\ar[r]^1&\mathcal{K}^{-1}}~.\] 
Since Fuchsian representations are exactly the maximal $\SO_0(2,1)$-representations, this is the maximal conformal $\SO_0(2,1)$-Higgs bundle associated to a Fuchsian representation.
\end{rmk}

When $n>2,$ these characteristic classes distinguish the connected components of maximal $\SO_0(2,n+1)$-Higgs bundles. In other words each of the sets $\Rep^{max}_{sw_1,sw_2}(\Gamma,\SO_0(2,n+1))$ is non-empty and connected \cite{MaxRepsHermSymmSpace}. Thus, for $n>2,$ the space $\Rep^{max}(\Gamma,\SO_0(2,n+1))$ has $2^{2g+1}$ connected components. This is proven in two steps. First one shows that for $n>2$ every component of moduli space of maximal $\SO_0(2,n+1)$-Higgs bundles contains points of the form \eqref{eq: SO(2,n+1) Higgs} with $\Vv$ a poly-stable orthogonal bundle and $\beta=0$. Then one uses the the fact that for $n>2,$ the components of the moduli space of poly-stable orthogonal bundles are labeled by the first and second Stiefel-Whitney class of the bundle \cite{ramanathan_1975,AndrePGLnR}. 
\begin{prop}\label{p:Fuchsian Locus n>2}
	For $n>2$ each connected component of maximal $\SO_0(2,n+1)$-representations contains a point in the Fuchsian locus from Definition \ref{d:FuchsianLocus}.
\end{prop}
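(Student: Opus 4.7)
The plan is to exhibit, for each pair of Stiefel--Whitney invariants $(sw_1,sw_2) \in H^1(\Sigma,\Z/2) \times H^2(\Sigma,\Z/2)$ indexing a component of $\Rep^{\max}(\Gamma,\SO_0(2,n+1))$, an explicit representation of the form $\rho = \rho_{Fuch} \otimes \det(\alpha) \oplus \alpha$ realizing it. First I would determine the Higgs bundle associated to such a Fuchsian-locus representation for an arbitrary choice of Riemann surface structure $X$. Since $\rho$ splits orthogonally as a $(2,1)$-piece carrying $\rho_{Fuch}\otimes\det(\alpha)$ and a $(0,n)$-piece carrying $\alpha$, the associated maximal conformal $\SO_0(2,n+1)$-Higgs bundle splits accordingly: the off-diagonal Higgs field $\beta_0$ vanishes, and the rank-$n$ holomorphic orthogonal bundle $(\Vv_0,q_{\Vv_0})$ is the complexification (with holomorphic structure coming from $X$) of the flat $O(n)$-bundle $V_\alpha$ associated to $\alpha$; the line bundle $\Ii=\Lambda^n\Vv_0$ and $\Ll=\mathcal K\Ii$ are then forced.

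The next step is to observe that the component in which a maximal Higgs bundle lies is determined by the topological type of $(\Vv_0,q_{\Vv_0})$, which in turn is governed by the Stiefel--Whitney classes of the underlying real $O(n)$-bundle. In the Fuchsian locus these are precisely $sw_i(V_\alpha)$. Thus the proposition reduces to the purely topological statement that, for $n>2$, every pair $(sw_1,sw_2)\in H^1(\Sigma,\Z/2)\times H^2(\Sigma,\Z/2)$ arises as $(sw_1(V_\alpha),sw_2(V_\alpha))$ for some orthogonal representation $\alpha:\Gamma\to O(n)$.

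This last claim is elementary and constructive. Given $sw_1$, interpret it as a character $\chi:\Gamma\to\Z/2$. Given $sw_2$, use that for $n\geq 3$ the group $SO(n)$ has $\pi_1(SO(n))=\Z/2$, so that surface group representations into $SO(n)$ exist realizing either value of the second Stiefel--Whitney class (these are exactly the representations that do, or do not, lift to $\mathrm{Spin}(n)$). One then assembles $\alpha$ inside $O(n)$ as a direct sum of: a line factor carrying $\chi$, a block of rank at most $n-1$ in which the desired $sw_2$ is realized, and trivial factors to fill the remaining rank. The hypothesis $n>2$ is precisely what guarantees sufficient room in $O(n)$ to accommodate both constructions simultaneously; then $\rho_{Fuch}\otimes\det(\alpha)\oplus\alpha$ provides the required Fuchsian-locus point in the prescribed component. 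The main technical point is this last combinatorial assembly, which is routine but must be carried out uniformly in $n$.
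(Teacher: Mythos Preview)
Your proposal is correct and follows essentially the same approach as the paper: identify the conformal Higgs bundle associated to a Fuchsian-locus representation $\rho_{Fuch}\otimes\det(\alpha)\oplus\alpha$, observe that the rank-$n$ bundle $\Vv$ is the flat orthogonal bundle determined by $\alpha$, and conclude via the fact that for $n>2$ the components are indexed exactly by the Stiefel--Whitney classes of $\Vv$. The paper's proof is very terse and leaves the last step---that every pair $(sw_1,sw_2)$ is realized by some flat $\mathrm{O}(n)$-bundle on $\Sigma$---entirely implicit, whereas you spell out an explicit construction using a line factor for $sw_1$ and an $\mathrm{SO}$-block for $sw_2$; this extra detail is welcome and the Whitney sum formula confirms it works.
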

\begin{proof}
Let $j:\Gamma\to\SO_0(2,1)$ be a Fuchsian representation and $\alpha:\Gamma\to\mathrm{O}(n)$ be an orthogonal representation. Consider the maximal $\SO_0(2,n+1)$-representation 
\[\rho=\big(j\otimes \det(\alpha)\big)\oplus \alpha\]
in the Fuchsian locus. By Remark \ref{rmk: Fuchsian higgs bundle}, the associated conformal Higgs bundle is given by 
\[\xymatrix@R=0em{\Ii\mathcal{K}\ar[r]^1&\Ii\ar[r]^1&\Ii\mathcal{K}^{-1}\\&\oplus&\\&\Vv&},\]
where $\Vv$ is the flat orthogonal bundle associated to the representation $\alpha$ and $\Ii=\det(\Vv)$ is the flat orthogonal bundle associated to the representation $\det(\alpha).$ 
By the above discussion, for $n>2$ every component of the moduli space of maximal $\SO_0(2,n+1)$-Higgs bundles contains Higgs bundles this form. Thus, each connected component of maximal $\SO_0(2,n+1)$-representations contains a point in the Fuchsian locus when $n>2.$
\end{proof}

The case of maximal $\SO_0(2,3)$-representations is slightly different. Namely, when the first Stiefel-Whitney class of $\Vv$ vanishes, the structure group of $\Vv$ reduces to $\SO(2).$ In this case, $\Vv$ is isomorphic to $\mathcal{N}\oplus\mathcal{N}^{-1}$ for some line bundle $\mathcal{N}$ with non-negative degree. 
Furthermore, the holomorphic section $\beta$ decomposes as $\beta=(\mu,\nu)\in H^0(\mathcal{N}^{-1}\mathcal{K}^2)\oplus H^0(\mathcal{N}\mathcal{K}^2).$ 
By stability, if $\deg(\mathcal{N})\geq0$, then $\mu\neq0$. Thus, we have a bound $0\leq \deg(\mathcal{N})\leq 4g-4.$ In terms of the diagram \eqref{eq: SO(2,n+1) Higgs}, these conformal Higgs bundles are given by 
\begin{equation}\label{eq:GothenHiggsdiagram}
	\xymatrix@R=-.2em{\mathcal{K}\ar[r]^1&\mathcal{O}\ar[r]^1&\mathcal{K}^{-1}\ar@/^/[ddddl]^\mu\ar@/^/[ddl]_\nu\\&\oplus&\\&\mathcal{N}\ar@/^/[uul]_\mu&\\&\oplus&\\&\mathcal{N}^{-1}\ar@/^/[uuuul]^\nu&},
\end{equation}

The \emph{Hitchin component} is the connected component of the representation variety $\Rep(\Gamma,\SO_0(2,3))$ containing the representations of the form $\iota_{irr}\circ j$ where $j: \Gamma \to \SO_0(2,1)$ is Fuchsian and $\iota_{\irr}: \SO_0(2,1) \to \SO_0(2,3)$ is the unique (up to conjugation) irreducible representation. This component is maximal and corresponds to the case $\deg(\mathcal{N})=4g-4$, which implies $\mathcal{N}=\mathcal{K}^2$ and $\mu$ is nowhere vanishing.


\begin{prop}\cite{MaxRepsHermSymmSpace}\label{p:Connected Components Maximal SO(2,3)}
The space of maximal $\SO_0(2,3)$-representations decomposes as
 \[\bigsqcup\limits_{sw_1\neq 0,\ sw_2}\Rep^{max}_{sw_1,sw_2}(\Gamma,\SO_0(2,3))\ \sqcup\bigsqcup_{0\leq d\leq 4g-4}\Rep_{d}^{max}(\Gamma,\SO_0(2,3)).\]
Here the Higgs bundles corresponding to representations in $\Rep^{max}_{sw_1,sw_2}(\Gamma,\SO_0(2,3))$ are given by \eqref{eq: SO(2,n+1) Higgs} with Stiefel-Whitney classes of $\Vv$ given by $sw_1$ and $sw_2$ and, for representations in $\Rep^{max}_{ d}(\Gamma,\SO_0(2,3)),$ the corresponding Higgs bundles have $\Vv=\mathcal{N}\oplus\mathcal{N}^{-1}$ with $\deg(\mathcal{N})=d.$
Moreover, each of the above spaces is connected. 
\end{prop}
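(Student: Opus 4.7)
The plan is to read off both the stratification and the connectedness from the explicit Higgs bundle parametrization of Proposition \ref{p:maximal Higgs bundle Param}, combined with the standard Morse-theoretic arguments on the moduli space of Higgs bundles.

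First, I would establish the stratification. A poly-stable maximal $\SO_0(2,3)$-Higgs bundle is, by Proposition \ref{p:maximal Higgs bundle Param}, determined by a tuple $(\Vv,q_\Vv,\beta,q_2)$ where $\Vv$ is a rank $2$ orthogonal holomorphic bundle. The underlying topological $\mathrm{O}(2)$-bundle is classified by its Stiefel--Whitney classes $(sw_1,sw_2) \in H^1(\Sigma,\Z/2) \oplus H^2(\Sigma,\Z/2)$, which are locally constant on the moduli space. This yields the labeling by $(sw_1,sw_2)$. When $sw_1 \neq 0$, this is the only topological refinement available. When $sw_1 = 0$, the structure group of $\Vv$ reduces to $\SO(2,\C)$, so up to isomorphism $\Vv \cong \mathcal{N}\oplus \mathcal{N}^{-1}$ with the standard pairing, and the integer $d = \deg(\mathcal{N})$ (which we may assume $\geq 0$ by swapping the summands) is an additional invariant.

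Next, I would establish the bound $0 \leq d \leq 4g-4$ via poly-stability. In the splitting $\Vv = \mathcal{N}\oplus \mathcal{N}^{-1}$ the section $\beta_0\in H^0(\mathcal{K}\Ii\otimes \Vv)$ decomposes as $(\mu,\nu)\in H^0(\mathcal{N}^{-1}\mathcal{K}^2 \otimes \Ii)\oplus H^0(\mathcal{N}\mathcal{K}^2\otimes\Ii)$; absorbing $\Ii$ (square-trivial) one gets sections of $\mathcal{N}^{-1}\mathcal{K}^2$ and $\mathcal{N}\mathcal{K}^2$. If $d>0$, the sub-bundle $\Ll\oplus\mathcal{N}$ of the associated $\SL(5,\C)$-Higgs bundle \eqref{eq:betagammaHiggsfield} is $\Phi$-invariant unless $\mu\neq 0$, and poly-stability forces $\mu\neq 0$, which gives $d\leq 4g-4$. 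The extremal case $d = 4g-4$ forces $\mathcal{N} = \mathcal{K}^2$ and recovers the Hitchin component.

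The main step, and the main obstacle, is the connectedness of each stratum. Here I would invoke the standard Morse-theoretic strategy on the moduli space of $\SO_0(2,3)$-Higgs bundles: the Hitchin function $f([(\Uu,q_\Uu,\Vv,q_\Vv,\eta)]) = \|\eta\|^2_{L^2}$ is proper, so each connected component of the moduli space contains a connected component of its critical locus. The critical points are fixed points of the $\C^*$-action $\eta \mapsto t\eta$, hence complex variations of Hodge structure; in each stratum one identifies the minima explicitly (e.g. $q_2 = 0$ and $\beta_0 = 0$ when possible) and verifies that the corresponding minimum subvariety is connected, essentially by identifying it with a moduli space of lower-rank holomorphic bundles with fixed topology, which is known to be connected. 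Deforming along the gradient flow of $-f$ then shows that each stratum $\Rep^{max}_{sw_1,sw_2}$ or $\Rep^{max}_d$ is connected. For the $d=4g-4$ stratum the minimum is a single $\C^*$-orbit (the Fuchsian locus) and connectedness is immediate; for the intermediate values $0<d<4g-4$ one has to analyze the moduli of pairs $(\mathcal{N},\mu)$ with $\mu \neq 0$, and for $sw_1 \neq 0$ one uses that the moduli of orthogonal rank $2$ bundles with prescribed non-trivial Stiefel--Whitney classes is connected. This is the content of \cite{MaxRepsHermSymmSpace}, which I would cite for the final connectedness statement.
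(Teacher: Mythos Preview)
The paper does not give its own proof of this proposition: it is stated with a citation to \cite{MaxRepsHermSymmSpace}, and the only argument in the text is the short paragraph immediately preceding the statement, which derives the stratification and the bound $0\leq d\leq 4g-4$ exactly as you do (reduction of $\Vv$ to $\mathcal{N}\oplus\mathcal{N}^{-1}$ when $sw_1=0$, decomposition $\beta=(\mu,\nu)$, and the poly-stability argument forcing $\mu\neq 0$ when $d>0$). The connectedness of each piece is not argued in the paper at all and is delegated entirely to the reference.

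Your proposal is therefore aligned with the paper on the part that the paper actually addresses, and for the connectedness you correctly sketch the standard Morse-theoretic argument via the properness of the Hitchin map and the identification of the minima as complex variations of Hodge structure, which is precisely the method of the cited reference. There is nothing to correct; your outline is the expected one, and your final appeal to \cite{MaxRepsHermSymmSpace} for the detailed connectedness analysis of the minimum loci is exactly what the paper itself does.
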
 

\begin{rmk}\label{r: Gothen v nonGothen components}
The components $\Rep_{d}^{max}(\Gamma,\SO_0(2,3))$ are the $\SO_0(2,3)$-versions of maximal $\Sp(4,\R)$-representations discovered by Gothen \cite{gothen}. Hence, we will call the $4g-4$ components 
$\bigsqcup\limits_{0<d\leq 4g-4}\Rep^{max}_{d}(\Gamma,\SO_0(2,3))$ \textit{Gothen components}. In particular, Hitchin representations are Gothen representations corresponding to $d=4g-4$. The remaining components 
\[\bigsqcup\limits_{sw_1\neq 0,\ sw_2}\Rep^{max}_{sw_1,sw_2}(\Gamma,\SO_0(2,3))\ \sqcup\ \Rep^{max}_{0}(\Gamma,\SO_0(2,3))\] will be called {\em reducible components}. The name is justified by Proposition \ref{p:Fuchsian Locus n=2}.
\end{rmk}

The Gothen components and the reducible components have important differences. 
In particular, the Gothen components are smooth, and all representations in Gothen components that are not Hitchin representations are Zariski dense \cite{bradlow,collierthesis}, while all reducible components contain representations in the Fuchsian locus. Thus we have:
\begin{prop}\label{p:Fuchsian Locus n=2}
The reducible components of maximal $\SO_0(2,3)$-representations are exactly the components containing representations in the Fuchsian locus from Definition \ref{d:FuchsianLocus}. 
\end{prop}

\section{Maximal space-like surfaces in $\H^{2,n}$}\label{s:maximalsurface}

In this section, we consider the action of a maximal representation $\rho: \Gamma \to \SO_0(2,n+1)$ on the pseudo-Riemannian symmetric space $\H^{2,n}$. We show that this action preserves a unique maximal space-like surface, the Gauss map of which gives a minimal surface in the Riemannian symmetric space $\mathfrak{X}$ of $\SO_0(2,n+1)$. As a corollary, we prove Labourie's conjecture for maximal $\SO_0(2,n+1)$ representations (see Theorem \ref{t:LabourieConjecture} of the introduction).

\subsection{The space $\H^{2,n}$} \label{ss:H2n}

In this section, we recall without proofs some classical facts about the pseudo-Riemannian symmetric spaces $\H^{2,n}$.

Recall that $\R^{2,n+1}$ denotes the space $\R^{n+3}$ endowed with the quadratic form
\[\mathbf{q}: (x_1, \ldots ,x_{n+3}) \mapsto x_1^2 + x_2^2 - x_3^2 - \ldots - x_{n+3}^2~.\]
Throughout the paper, $\scal{\cdot}{\cdot}$ denotes the symmetric bilinear pairing associated to $\mathbf q$ and the symbol $\perp$ refers to orthogonality with respect to $\mathbf q$.

\begin{defi}
The space $\H^{2,n} \subset \ProjR{n+3}$ is the set of lines in $\R^{2,n+1}$ in restriction to which the quadratic form $\mathbf{q}$ is negative. The space $\hat{\H}^{2,n}$ is the set of vectors $u$ in $\R^{2,n+1}$ such that $\mathbf{q}(u) = -1$.
\end{defi}

The natural projection from $\hat{\H}^{2,n}$ to $\H^{2,n}$ is a covering of degree $2$. The tangent space to $\hat{\H}^{2,n}$ is the hyperplane $x^\perp$, and  the restriction of $\mathbf{q}$ to that tangent space induces a pseudo-Riemannian metric on $\H^{2,n}$ of signature $(2,n)$ and sectional curvature $-1$. The group $\SO_0(2,n+1)$ acts transitively on $\H^{2,n}$ preserving this pseudo-Riemannian metric.

\begin{rmk}
The space $\H^{2,1}$ is a Lorentz manifold called the \emph{anti-de Sitter space} of dimension $3$. Some of the results presented in this section generalize known results for $\H^{2,1}$ (see \cite{BonsanteSchlenker}). Note however that the Lie group $\SO_0(2,2)$ is isomorphic to a two-to-one cover of $\PSL(2,\R) \times \PSL(2,\R)$, thus the case $n=1$ is quite special.
\end{rmk}

\noindent\textbf{Compactification.} The boundary of $\H^{2,n}$ in $\ProjR{n+2}$ is the space of isotropic lines in $\R^{2,n+1}$:

\begin{defi}
The Einstein space $\Ein^{1,n} \subset \ProjR{n+2}$ is the set of isotropic lines in $\R^{2,n+1}$. The space $\hat{\Ein}^{1,n}$ is the quotient of the space of isotropic vectors in $\R^{2,n+1}$ by the action of $\R_{>0}$ by homotheties.
\end{defi}

The space $\Ein^{1,n}$ has a natural conformal class of pseudo-Riemannian metrics with signature $(1,n)$ which is invariant by the action of $\SO_0(2,n+1)$. It is thus the local model for conformally flat Lorentz manifolds.

\medskip

\noindent\textbf{Geodesics.} The complete geodesics of $\H^{2,n}$ are the intersections of $\H^{2,n}$ with projective lines. These geodesics fall into three categories:
\begin{itemize}
\item \emph{space-like geodesics} are intersections of $\H^{2,n}$ with projective lines (corresponding to planes) of signature $(1,1)$,
\item \emph{light-like geodesics} are intersections of $\H^{2,n}$ with projective lines of (degenerate) signature $(0,1)$,
\item \emph{time-like geodesics} are intersections of $\H^{2,n}$ with projective lines of signature $(0,2)$.
\end{itemize}

Let $u$ and $v$ be two vectors in $\R^{2,n+1}$ such that $\mathbf{q}(u) = \mathbf{q}(v) = -1$ and $v \neq \pm u$. Then the projections $[u]$ and $[v]$ of $u$ and $v$ in $\H^{2,n}$ are joined by a unique geodesic, which is the intersection of $\H^{2,n}$ with the projective line associated to the span of $u$ and $v$. If this geodesic is space-like, then one can define the space-like distance $d_{\H^{2,n}}([u],[v])$ between $[u]$ and $[v]$ as the length of the geodesic segment joining them. Though this function is not an actual distance, it will be useful later on.
\begin{prop}[see \cite{GlorieuxMonclair}, Proposition 3.2] \label{p:Distancespace-likePoints}
The points $[u]$ and $[v]$ are joined by
\begin{itemize}
 	\item a space-like geodesic if and only if $\left|\langle u,v\rangle\right| >1$,
 	\item a time-like geodesic if and only if $\vert \langle u,v\rangle\vert<1.$
 \end{itemize} 
Moreover, when $\left|\langle u,v\rangle\right| >1$ we have
\[d_{\H^{2,n}}([u],[v])=\cosh^{-1}\left|\langle u,v\rangle\right|~.\]
\end{prop}

\noindent\textbf{Warped product structure.} It is sometimes useful to picture $\hat{\H}^{2,n}$ as a product of a $\H^2\times \S^n$ endowed with a "twisted" metric. To do so, consider an orthogonal splitting $\R^{2,n+1}= E\oplus F$, where $E$ is a space-like $2$-plane and $F$ is its orthogonal. We use the notation $\norm{\cdot}$ to denote both the square root of $\mathbf q$ on $E$ and the square root of $-\mathbf q$ on $F$, so that $\norm{\cdot}$ is a euclidean norm on both $E$ and $F$. 

\begin{prop} \label{p:WarpedProduct}
Let $\D$ be the open disc of radius $1$ in $E$, and $\S^n$ the sphere of radius $1$ in $F$. 
\begin{itemize}
\item[(a)] The map
\[\function{\Psi}{\D \times \S^n}{\hat{\H}^{2,n}}{(u,v)}{\frac{2}{1-\norm{u}^2} u + \frac{1+\norm{u}^2}{1-\norm{u}^2} v}\]
is a homeomorphism.
\item[(b)] We have
\[ \Psi^*\hat g_{\H^{2,n}} = \frac{4}{(1-\norm{u}^2)^2} g_\D \oplus - \left( \frac{1+ \norm{u}^2}{1-\norm{u}^2}\right)^2 g_{\S^n}~,\]
where $g_\D$ is the flat euclidean metric on $\D$ and $g_{\S^n}$ is the spherical metric on $\S^n$.
\item[(c)] The map 
\[\function{\partial_\infty \Psi}{\partial \D \times \S^n}{\hat{\Ein}^{1,n}}{(u,v)}{u+ v}\]
is a homeomorphism that extends $\Psi$ continuously.
\end{itemize}
\end{prop}

\begin{proof}
This presumably well-known result is essentially a straightforward computation. We sketch the proof here for completeness.
\begin{itemize}
\item[(a)] Let $w$ be a vector in $\hat{\H}^{2,n}$. Write $w= u'+v'$, with $u'\in E$ and $v'\in F$. Since the map $r\mapsto \frac{2r}{1-r^2}$ is a diffeomorphism from $[0,1)$ to $\R_+$, there is a unique $u\in \D$ such that $u' = \frac{2}{1-\norm{u}^2} u$. Take $v= \frac{1-\norm{u}^2}{1+\norm{u}^2}v'$. One easily verifies that
\[\mathbf q (w) = -1 \Leftrightarrow \norm{v} =1~. \\ \]

\item[(b)] Let $(\dot u, \dot v)$ be a tangent vector to $\D \times \S^n$ at $(u,v)$. We view $\dot{v}$ as a vector in $\R^{n+1}$ orthogonal to $v$. We then have
\[d \Psi(\dot u, \dot v) = \left(\frac{2}{1-\norm{u}^2}\dot u + \frac{4 \scal{\dot u}{u}}{(1-\norm{u}^2)^2} u\right) + \left(\frac{1+\norm{u}^2}{1-\norm{u}^2} \dot v + \frac{4 \scal{\dot u}{u}}{(1-\norm{u}^2)^2} v\right)~.\]
Hence,
\begin{eqnarray*}
\Psi^*\hat g_{\H^{2,n}}(\dot u, \dot v) & = & \mathbf q\left( d \Psi(\dot u,\dot v)\right) \\
&=& \norm{\frac{2}{1-\norm{u}^2}\dot u + \frac{4 \scal{\dot u}{u}}{(1-\norm{u}^2)^2} u}^2 - \norm{\frac{1+\norm{u}^2}{1-\norm{u}^2} \dot v + \frac{4 \scal{\dot u}{u}}{(1-\norm{u}^2)^2} v}^2 \\
&=&\frac{4\norm{\dot u}^2}{(1-\norm{u}^2)^2} - \left(\frac{1+\norm{u}^2}{1-\norm{u}^2}\right)^2 \norm{\dot v}^2~.\\
\end{eqnarray*}

\item[(c)] Let $w$ be a non-zero isotropic vector in $\R^{2,n+1}$. Write $w= u'+v'$, with $u'\in E$ and $v'\in F$. Then $\norm{u'}^2 - \norm{v'}^2= 0$ and there is a unique positive scalar $\lambda$ such that $\lambda w= u+v$ with $\norm{u}^2 = \norm{v}^2 =1$. This proves that $\partial_\infty \Psi: \partial \D \times \S^n\to \hat{\Ein}^{1,n}$ is a homeomorphism. Moreover, if $(u_n,v_n) \in \D\times \S^n$ converges to $(u,v)\in \partial \D \times \S^n$, then
\[\frac{1-\norm{u_n}^2}{2}\Psi(u_n,v_n) \tend{n\to +\infty}u+v~,\]
proving that $\partial_\infty \Psi$ extends $\Psi$ continuously.
\end{itemize}
\end{proof}

\begin{rmk}
These coordinates depend on the choice of the orthogonal splitting $\R^{2,n+1} = E\oplus F$ which can be arbitrary. We will make extensive use of that latitude in the choice of the splitting later on.
\end{rmk}

\subsection{Space-like surfaces} \label{ss:SpacelikeSurface}
We recall here some basic facts about space-like surfaces in $\H^{2,n}$.
An immersed surface $S\subset \H^{2,n}$ of class $\mathcal C^1$ is \emph{space-like} if the restriction of the pseudo-Riemannian metric $g_{\H^{2,n}}$ to $TS$ is positive definite. It is called \emph{complete} if the Riemannian metric induced by $g_{\H^{2,n}}$ is complete. We will prove the following:

\begin{lem} \label{l:GeometrySpacelikeSurface}
Let $S$ be a complete connected immersed space-like surface in $\H^{2,n}$. Then 
\begin{itemize}
\item[(a)] $S$ is embedded and diffeomorphic to an open disc,
\item[(b)] the boundary $\partial S$ of $S$ in $\ProjR{n+3}$ is homeomorphic to a circle and contained in $\Ein^{1,n}$.
\item[(c)] Any two distinct points $x\in S$ and $y\in S\cup \partial S$ are joined by a space-like geodesic.
\end{itemize}
\end{lem}

The key of Lemma \ref{l:GeometrySpacelikeSurface} is that an immersed space-like surface can be written as the graph of a Lipschitz map with respect to the warped product coordinates described in Proposition \ref{p:WarpedProduct}. Let $S$ be a complete connected immersed space-like surface in $\H^{2,n}$. Let $\hat{S}$ denote the inverse image of $S$ by the projection from $\hat{\H}^{2,n}$ to $\H^{2,n}$.

\begin{prop} \label{p:SLipschitzGraph}
The inverse image $\hat{S}$ of $S$ is embedded and has at most two connected components diffeomorphic to discs. Moreover, if we identify $\hat{\H}^{2,n}$ with $\D \times \S^n$ as in Proposition \ref{p:WarpedProduct}, then each of these connected components identifies with the graph of a Lipschitz map from $\D$ to $\S^n$.
\end{prop}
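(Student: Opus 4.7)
My plan is to use the warped product identification of Proposition~\ref{p:WarpedProduct} to present each connected component of $\hat{S}$ as a graph of a map from $\D$ to $\S^n$.

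\textbf{Step 1 (local graph structure):} In the coordinates $(u,v) \in \D \times \S^n$, the pseudo-Riemannian metric on $\hat{\H}^{2,n}$ provided by Proposition~\ref{p:WarpedProduct} splits as a positive-definite contribution on the $\D$-factor and a negative-definite contribution on the $\S^n$-factor. Accordingly, the space-like condition applied to any tangent vector $(\dot u, \dot v) \in T\hat{S}$ gives a pointwise bound of the form $\|\dot v\|_{\S^n} \leq C(u)\, \|\dot u\|_{\D}$. In particular, $d\pi_\D|_{T\hat{S}}$ is injective and hence, by dimension count, a linear isomorphism, so $\pi_\D|_{\hat{S}} : \hat{S} \to \D$ is a local diffeomorphism. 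The same bound yields the Lipschitz estimate on any map $\phi : \D \to \S^n$ whose graph is locally contained in $\hat{S}$.

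\textbf{Step 2 (properness and globalization):} I would then establish that $\pi_\D|_{\hat{S}}$ is \emph{proper}. The hypothesis that $\rho(\Gamma)$ acts co-compactly on $S$ (properly discontinuously, by the Anosov property from Theorem~\ref{t:AnosovCurve}) implies that $S$, and therefore $\hat{S}$, is closed in the ambient space: the orbit of a compact fundamental domain under a properly discontinuous action is closed. Given a sequence $\hat{s}_n \in \hat{S}$ with $\pi_\D(\hat{s}_n) \to u_\infty \in \D$, the $\S^n$-coordinates of $\hat{s}_n$ stay in the compact sphere, so a subsequence converges in $\D \times \S^n$, with limit in $\hat{S}$ by closedness. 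A proper local diffeomorphism onto a connected target is a covering map, and since $\D$ is simply connected, each connected component $\hat{S}_0 \subseteq \hat{S}$ is diffeomorphic to $\D$ via $\pi_\D$. Hence $\hat{S}_0$ is the graph of a map $\phi : \D \to \S^n$, which is Lipschitz by Step~1.

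\textbf{Step 3 (counting components):} The deck transformation of the two-fold cover $\hat{\H}^{2,n} \to \H^{2,n}$ acts in the coordinates of Proposition~\ref{p:WarpedProduct} as $(u,v) \mapsto (u,-v)$, so the two preimages in $\hat{S}$ of any point of $S$ share the same $\D$-coordinate. Since $\pi_\D$ is injective on each connected component of $\hat{S}$, the two preimages must lie in distinct components, and this bounds the total number of components by two.

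The main obstacle I foresee is the properness argument in Step~2, and more specifically the closedness of $S$ in $\H^{2,n}$ for an abstract $\rho$-invariant space-like surface with co-compact quotient. This rests on the action of $\rho(\Gamma)$ on $S$ being properly discontinuous, which is a consequence of $\rho$ being Anosov but requires care to invoke at this level of generality.
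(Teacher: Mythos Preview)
Your overall strategy matches the paper's, but Steps~2 and~3 each contain a genuine issue.

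In Step~2, the closedness argument is the weak point, and your own caveat is on target. The assertion that ``the orbit of a compact fundamental domain under a properly discontinuous action is closed'' would require $\rho(\Gamma)$ to act properly discontinuously on the ambient $\H^{2,n}$, not merely on $S$; but $\H^{2,n}$ is a pseudo-Riemannian homogeneous space on which the $\SO_0(2,n+1)$-action (hence the $\rho(\Gamma)$-action) is not proper, so this fails. The paper sidesteps closedness entirely by arguing via \emph{completeness}: since $\hat S$ is space-like, the induced metric $g_{\H^{2,n}}|_{\hat S}$ is Riemannian, and since $\rho(\Gamma)$ acts co-compactly by isometries of it, it is complete. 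The warped-product form gives $\pi_\D^*g_{\H^2}\geq g_{\H^{2,n}}|_{\hat S}$, so the pullback metric $\pi_\D^*g_{\H^2}$ is also complete on $\hat S$. A local isometry from a complete Riemannian manifold is automatically a covering map, and this yields properness without any appeal to closedness in the ambient space or to proper discontinuity.

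In Step~3, your description of the deck transformation is incorrect. The involution of $\hat\H^{2,n}\to\H^{2,n}$ is $x\mapsto -x$, and in the coordinates of Proposition~\ref{p:WarpedProduct} one checks that $-F(u,v)=F(-u,-v)$, so the deck transformation is $(u,v)\mapsto(-u,-v)$, not $(u,v)\mapsto(u,-v)$. Hence the two preimages of a point of $S$ do \emph{not} share the same $\D$-coordinate, and the injectivity argument does not apply. Fortunately the bound of two components is immediate anyway: $\hat S\to S$ is a degree-two cover of the connected space $S$, so $\hat S$ has at most two components; each covers the simply connected disc $\D$ via $\pi_\D$, hence is diffeomorphic to $\D$. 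This is exactly the paper's argument.
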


\begin{rmk}
We will see in the proof of Lemma \ref{l:GeometrySpacelikeSurface}  that $\hat{S}$ indeed has two connected components and deduce that $S$ itself is homeomorphic to a disc.
\end{rmk}

\begin{proof}
Denote by $g_{\H^2}$ the complete hyperbolic metric $\frac{4}{(1-\norm{u}^2)^2} g_\D$ on $\D$, and let $\pi: \hat S \to \D$ be the projection on the first factor.
By Proposition \ref{p:WarpedProduct}, the metric $\hat g_{\H^{2,n}}$ expressed in the warped product coordinates has negative vertical part and horizontal part equal to $g_{\H^2}$. We thus have
\[\pi^* g_{\H^2} \geq \hat g_{\H^{2,n}}~,\]
where $\hat g_{\H^{2,n}}$ is the metric induced on $\hat S$. Since the restriction of $\hat g_{\H^{2,n}}$ is assumed to be complete, $\pi^* g_{\H^2}$ is also a complete Riemannian metric on $\hat{S}$. It follows that $\pi: \hat{S} \to \H^2$ is a proper immersion, hence a covering. Since $\H^2$ is simply connected and $S$ is connected, $\hat{S}$ has at most $2$ connected components diffeomorphic to discs. 

Let $\hat{S}_0$ be one of the connected components of $\hat{S}$. Since the projection $\hat{S}_0$ to $\D$ is a diffeomorphism, $\hat{S}_0$ is the graph of a $\mathcal{C}^1$ map $f: \D \to \S^n$. In particular, $\hat{S}_0$ is embedded. For every $u \in \D$ and every $\dot{u}\in T_u\D$, we have
\[ \frac{4}{\left(1-\norm{u}^2\right)^2} \norm{\dot{u}}^2 - \left(\frac{1+\norm{u}^2}{1-\norm{u}^2}\right)^2\norm{d f_u(\dot{u})}^2 >0\]
since $\hat{S}_0$ is space-like. Therefore,
\begin{equation}\label{LipschitzControl}\norm{d f_u(\dot{u})} < \frac{2\norm{\dot{u}}}{1+ \norm{u}^2} \leq 2\norm{\dot{u}} \end{equation}
and $f$ is $2$-Lipschitz.
\end{proof}

Note that one can choose the identification of $\hat{\H}^{2,n}$ with $\D \times \S^n$ so that $\{0\} \times \S^n$ is the intersection of $\hat{\H}^{2,n}$ with any given negative definite linear subspace of $\R^{2,n+1}$ of dimension $n+1$. One thus obtains the following corollary:

\begin{coro} \label{c:IntersectionSpacelikeSurface}
Any negative definite subspace of $\R^{2,n+1}$ of dimension $n+1$ intersects $S$ exactly once.
\end{coro}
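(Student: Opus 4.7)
\smallskip

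\noindent\textbf{Proof plan for Corollary \ref{c:IntersectionSpacelikeSurface}.}
Let $W \subset \R^{2,n+1}$ be a negative definite $(n+1)$-dimensional subspace. The plan is to reduce the claim to the graph picture of Proposition \ref{p:SLipschitzGraph}. Choose an orthogonal splitting $\R^{2,n+1} = \R^{2,0} \oplus \R^{0,n+1}$ such that $\R^{0,n+1} = W$ (this is possible because $W^\perp$ has signature $(2,0)$). Then the identification $\hat{\H}^{2,n} \cong \D \times \S^n$ of Proposition \ref{p:WarpedProduct} satisfies $W \cap \hat{\H}^{2,n} = \{0\} \times \S^n$, as observed just before the corollary. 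The projection to $\H^{2,n}$ identifies $W \cap \H^{2,n}$ with $\bigl(\{0\} \times \S^n\bigr)/\{\pm 1\}$.

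Next, I would exploit that $\hat S$ has either one or two connected components, each a graph over $\D$ of a Lipschitz map $f_i : \D \to \S^n$. The set $(\{0\} \times \S^n) \cap \hat S_i$ is then the single point $(0, f_i(0))$ for each component, because the graph meets the fiber above $0 \in \D$ exactly once. Hence $(\{0\} \times \S^n) \cap \hat{S}$ consists of one or two points.

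Finally I would check that in the two-component case these two points project to the same point of $S$. The deck involution of the double cover $\hat{\H}^{2,n} \to \H^{2,n}$ is the antipodal map $P \mapsto -P$, which in the coordinates of Proposition \ref{p:WarpedProduct} reads $(u,v) \mapsto (-u,-v)$. Since this involution preserves $\hat S$ and swaps the two components when they are distinct, the image of $\hat S_1$ under the involution is $\hat S_2$; evaluating at $u=0$ gives $f_2(0) = -f_1(0)$, so the two intersection points $(0, f_1(0))$ and $(0, f_2(0))$ are antipodal and descend to the same point of $\H^{2,n}$. In either case $W \cap S$ is exactly one point.

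The argument is essentially bookkeeping once the warped product picture is in place; the only real subtlety is tracking how the antipodal identification on $\hat{\H}^{2,n}$ acts on the two lifts, which is what forces the naive count of two intersection points to collapse to one in $S$.
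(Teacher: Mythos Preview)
Your argument is correct and follows exactly the approach the paper intends: choose the warped-product splitting so that the given negative definite $(n+1)$-plane becomes $\{0\}\times\S^n$, and then read off the intersection from the graph description of $\hat S$ in Proposition~\ref{p:SLipschitzGraph}. You actually supply more detail than the paper (which dispatches the corollary in one sentence), in particular the check that the antipodal involution $(u,v)\mapsto(-u,-v)$ forces the two graph-points over $u=0$ to coincide in $S$; note that this same observation rules out the one-component case altogether, so your ``either case'' is in fact just the two-component case.
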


Let $\hat{S}_0$ be a connected component of $\hat{S}$. By Proposition \ref{p:SLipschitzGraph}, $\hat{S}_0$ is the graph of a Lipschitz map $f: \D \to \S^n$. The map $f$ thus extends to a continuous map $\partial f: \partial \D \to \S^n$ and the boundary of $\hat{S}_0$ is the graph of $\partial f$ (seen as a subset of $\hat{\Ein}^{1,n}$). In particular, it is a topological circle, and so is its projection to $\Ein^{1,n}$.

\begin{lem} \label{l:HyperplaneSeparatesS}
Let $x$ be a point in $S$. Then $S \cup \partial_\infty S$ does not intersect $x^\perp$.
\end{lem}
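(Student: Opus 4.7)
I will pass to the double cover $\hat{\H}^{2,n}$: let $\hat x \in \hat{\H}^{2,n}$ be a lift of $x$ and $\hat S_0$ the connected component of the lift of $S$ containing $\hat x$. Since $y \in x^\perp$ iff $\mathbf{q}(\hat x, \hat y) = 0$ for a representative $\hat y \in \R^{2,n+1}$, it suffices to prove $\mathbf{q}(\hat x, \hat y) \ne 0$ for every $\hat y$ representing a point in $\hat S_0 \cup \partial_\infty \hat S_0$ (the other component of $\hat S$ is $-\hat S_0$, giving the same vanishing condition).

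\emph{Interior case.} Using transitivity of $\SO_0(2,n+1)$, I normalize in the warped product of Proposition \ref{p:WarpedProduct} so that $\hat x = F(0,v_0)$ for some $v_0 \in \S^n$. By the proof of Proposition \ref{p:SLipschitzGraph}, $\hat S_0$ is the graph of a smooth map $f:\D\to\S^n$ with $f(0)=v_0$ satisfying the strict space-like bound
\[\|df_u(w)\|_{\S^n} < \frac{2\|w\|_{\R^2}}{1+\|u\|^2}.\]
A direct computation using $d_{\H^2}(0,u) = 2\tanh^{-1}\|u\|$ and $\cosh(2\tanh^{-1}r) = (1+r^2)/(1-r^2)$ yields
\[\mathbf{q}(\hat x, F(u, f(u))) = -\cosh(d_{\H^2}(0,u))\cos(d_{\S^n}(v_0,f(u))).\]
Integrating the Lipschitz bound along the Euclidean segment from $0$ to $u$ produces the strict inequality $d_{\S^n}(v_0, f(u)) < \int_0^{\|u\|} \frac{2\,dt}{1+t^2} = 2\arctan(\|u\|)$ for $u\in\D$, which — via the identity $\cos(2\arctan r) = (1-r^2)/(1+r^2) = 1/\cosh(d_{\H^2}(0,u))$ — is equivalent to $\cosh(d_{\H^2}(0,u))\cos(d_{\S^n}(v_0,f(u))) > 1$. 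Therefore $\mathbf{q}(\hat x, \hat y) < -1$; in particular $y \notin x^\perp$. As a byproduct, any two points of $\hat S_0$ are joined by a space-like geodesic, a fact needed in the subsequent corollaries.

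\emph{Boundary case.} For $\hat\xi_0 \in \partial_\infty \hat S_0$ with representative $(u_0, \partial f(u_0))$, $u_0 \in \partial\D$, the same formula gives $\mathbf{q}(\hat x, \hat\xi_0) = -\cos(d_{\S^n}(v_0, \partial f(u_0)))$, and continuity only yields $d_{\S^n}(v_0, \partial f(u_0)) \leq \pi/2$. The proximality of Corollary \ref{c:AnosovProximal} immediately gives non-vanishing on the dense subset $\{\gamma^\pm : \gamma \in \Gamma\} \subset \partial_\infty \Gamma$: decomposing $\hat x = a e_+ + b e_- + v$ in the eigenbasis of $\rho(\gamma)$, the asymptotics $\mathbf{q}(\hat x, \rho(\gamma)^n \hat x) \sim ab\lambda^n$ combined with the interior estimate $|\mathbf{q}(\hat x, \rho(\gamma)^n \hat x)| \geq 1$ force $a, b \ne 0$, so $\mathbf{q}(\hat x, e_+) = b \ne 0$, whence $\hat\xi(\gamma^+) \notin \hat x^\perp$.

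\emph{Main obstacle.} The hardest step is upgrading non-vanishing from this dense subset to the entire limit curve $\xi(\partial_\infty\Gamma) = \partial_\infty S$. I would attempt this via an elliptic PDE argument: the function $\psi := -\mathbf{q}(\hat x, \cdot)|_{\hat S_0}$ satisfies $\Delta_{\hat S_0} \psi = 2\psi$, derived from the decomposition $\alpha'' = \mathrm{II}^S(\alpha', \alpha') + \mathbf{q}(\alpha', \alpha')\alpha$ for a unit-speed curve $\alpha$ on $\hat S_0 \subset \hat{\H}^{2,n} \subset \R^{2,n+1}$ together with vanishing of the mean curvature of the maximal surface. The boundary behavior of such an eigenfunction, coupled with the transversality induced by the space-like character of the limit curve, should rigidly constrain its zero set on $\partial_\infty \hat S_0$, in the spirit of Bonsante--Schlenker's anti-de Sitter analysis.
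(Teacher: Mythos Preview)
Your interior argument is essentially the paper's, but you have invented a difficulty in the boundary case that is not there. You write that for $u_0\in\partial\D$ ``continuity only yields $d_{\S^n}(v_0,\partial f(u_0))\le\pi/2$'' and then launch into proximality and an unfinished PDE sketch. In fact the same integral estimate already gives a \emph{strict} inequality at the boundary. The radial segment $t\mapsto t u_0$, $t\in[0,1]$, lies in the open disc $\D$ for every $t<1$, where the strict space-like bound $\|df_{t u_0}(u_0)\|<2/(1+t^2)$ holds pointwise. Since $f$ is Lipschitz on $\overline{\D}$, the path $t\mapsto f(t u_0)$ is absolutely continuous, so
\[
d_{\S^n}\bigl(v_0,f(u_0)\bigr)\;\le\;\int_0^1 \bigl\|df_{t u_0}(u_0)\bigr\|\,dt\;<\;\int_0^1 \frac{2}{1+t^2}\,dt\;=\;\frac{\pi}{2},
\]
the middle inequality being strict because the integrand is strictly below the comparison function on a set of full measure. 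Hence $\cos\bigl(d_{\S^n}(v_0,f(u_0))\bigr)>0$ and $\mathbf{q}(\hat x,\hat\xi_0)\ne 0$. This is precisely how the paper argues, treating $z\in\overline{\D}$ uniformly.

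So your proposal is not wrong in its computations, but it is incomplete as stated: the ``main obstacle'' is a phantom, and the PDE route you outline (which would indeed require further work to rule out zeros of an eigenfunction on the ideal boundary) is unnecessary. Once you observe that the strict integrand bound survives integration up to $\|u\|=1$, the lemma is done.
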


\begin{proof}
Let $\hat{x}$ be a lift of $x$ in $\hat{\H}^{2,n}$ and $\hat{S}_0$ the lift of $S$ containing $\hat{x}$. Since the space $\hat{\H}^{2,n}$ is homogeneous, we can choose an identification of $\hat{\H}^{2,n}$ with $\D \times \S^n$ so that $\hat{x}$ is identified to the point $(0,v_0)$ for some $v_0 \in \S^n$.

Let $f:\overline{\D} \to \S^n$ be such that $\hat{S}_0 \cup \partial_\infty \hat{S}_0$ is the graph of $f$. In particular, we have $f(0) = v_0$. For every $u$ in $\overline{\D}$, we have
\begin{eqnarray*}
d_{\S^n}(f(u), v_0) & \leq & \int_0^1 \norm{\dt f(tu)} d t \\
 & < & \int_0^1 \frac{2\norm{u}}{1+\norm{u}^2 t^2}dt \\
 & < & 2 \arctan(\norm{u}) ~\leq~ \frac{\pi}{2}~,
\end{eqnarray*}
where in the second line we used Equation \eqref{LipschitzControl}. 
Since points orthogonal to $f(u)$ are at a distance $\frac{\pi}{2}$ in $\S^n$, $v_0$ is not orthogonal to $f(u)$, and we conclude that the point with coordinates $(u, f(u))$ in $\hat{\H}^{2,n} \cup \hat{\Ein}^{1,n}$ is not in the orthogonal of $\hat{x}$. Since this is true for any $u\in \overline{\D}$ and since $\hat{S}_0 \cup \partial_\infty \hat{S}_0$ is the graph of $f$, this concludes the proof of the lemma.
\end{proof}

We can now conclude the proof of Lemma \ref{l:GeometrySpacelikeSurface}.
\begin{proof}[Proof of Lemma \ref{l:GeometrySpacelikeSurface}]
The projection from $\hat{S} \cup \partial_\infty \hat{S}$ to $S\cup \partial_\infty S$ is a covering of degree $2$. Let $x$ be a point in $\hat{S}$. Then the function from $\hat{S} \cup \partial_\infty \hat{S}$ to $\{-1,1\}$ associating to $y$ the sign of $\scal{x}{y}$ is a well-defined continuous function by Lemma~\ref{l:HyperplaneSeparatesS}. Since $\scal{x}{-y} = - \scal{x}{y}$, this function takes both possible values, and hence $\hat{S} \cup \partial_\infty \hat{S}$ has two connected components.

The covering of degree $2$ from $\hat{S} \cup \partial_\infty \hat{S}$ to $S\cup \partial_\infty S$ is thus a trivial covering. Since each connected component of $\hat{S} \cup \partial_\infty \hat{S}$ is homeomorphic to a closed disc, so is $S$. Properties (a) and (b) follow.

Finally, let $x$ be a point in $S$ and $y$ a point in $S\cup \partial_\infty S$. Let $\hat{x}$ and $\hat{y}$ be lifts of $x$ and $y$ belonging to the same component of $\hat{S} \cup \partial_\infty \hat S$. Identify $\hat{\H}^{2,n} \cup \hat{\Ein}^{1,n}$ with $\overline{\D} \times \S^n$ as in Proposition \ref{p:WarpedProduct} so that $\hat{x}$ is identified with the point $(0,v_0)$ and $y$ is identified with the point $(u, v)$. Following the computation in the proof of Lemma \ref{l:HyperplaneSeparatesS}, we get that $d_{\S^n}(v_0,v) < 2\arctan(\norm{u})$. Thus $\scal{v_0}{v} > \cos\left(2\arctan(\norm{u})\right) = \frac{1- \norm{u}^2}{1+\norm{u}^2}$ and
\begin{eqnarray*}
\scal{\hat x}{\hat y} & = & -\frac{1+\norm{u}^2}{1-\norm{u}^2} \scal{v_0}{v} \\
& < & -1~.
\end{eqnarray*}
Hence $x$ and $y$ are joined by a space-like geodesic by Proposition \ref{p:Distancespace-likePoints}.
\end{proof}

\subsection{The normal bundle, second fundamental form and Gauss maps}\label{subsection-Gaussmaps} In this section, we recall the definition of several classical differential geometric objects associated to space-like immersions into a pseudo-Riemannian manifold, with an emphasis on the case of $\H^{2,n}$.

Let $(M,g)$ be a pseudo-Riemannian manifold of signature $(p,q)$ with $p\geq 2$, $\rho$ be a representation of $\Gamma$ into $\Isom(M,g)$ and $u: \widetilde{\Sigma} \to M$ be a $\rho$-equivariant space-like immersion. 
The action of $\Gamma$ on $\widetilde{\Sigma}$ lifts to an action on the bundle $u^*TM$, and the  quotient by this action gives rise to a bundle on $\Sigma$ that we still denote $u^*TM$. This bundle is naturally endowed with the pull-back $u^*g$ of the pseudo-Riemannian metric of $M$.

The differential of $u$ embeds the tangent bundle $T \Sigma$ as a space-like sub-bundle of $(u^*TM,u^*g)$, which we denote by $T^u$. The \emph{normal bundle} of the immersion $u$ is by definition the orthogonal of $T^u$ in $(u^*TM, u^*g)$. We denote it by $N^u$. Finally, we denote the restrictions of $u^*g$ to $T^u$ and $N^u$ by $g_T$ and $g_N$ respectively, and we denote the pull-back of the Levi-Civita connection of $M$ by $\nabla$. 

Let $X$ be a vector field on $\Sigma$, $Y$ be a section of $T^u$ and  $\xi$ be a section of $N^u$. Since $d u$ identifies $T\Sigma$ with $T^u$ we can alternatively view $Y$ as a vector field on $\Sigma$. Then the decomposition of $\nabla$ along $T^u$ and $N^u$ takes the form
$$\left\{ \begin{array}{lll}
\nabla_X Y & = & \nabla^T_X Y + \mathrm{II}(X,Y) \\
\nabla_X \xi & = & -B(X,\xi) + \nabla^N_X \xi~,
\end{array} \right.$$
where $\nabla^T$ is the Levi-Civita connection of $(\Sigma,g_T)$, $\nabla^N$ is a connection on $N^u$ preserving $g_N$, $\mathrm{II}\in \Omega^1(\Sigma,\Hom(T^u,N^u))$ is the \emph{second fundamental form} and $B\in\Omega^1(S,\Hom(N^u,T^u))$ is called the \textit{shape operator}.

Since $\nabla$ is torsion-free, the second fundamental form is symmetric, namely, $\mathrm{II}\in \Omega^0(\Sym^2(TS)^*\otimes N^u)$. Moreover, the second fundamental form and the shape operator are dual to each other:
$$g_N\big(\mathrm{II}(X,Y),\xi\big) = g_T\big(B(X,\xi),Y\big)~. \\$$

Let us now turn to the case where $M = {\H}^{2,n}$. Let $\pi: \Gg(\H^{2,n}) \to \H^{2,n}$ be the fiber bundle whose fiber over $x\in \H^{2,n}$ is the set of oriented positive definite $2$-planes in $T_x\H^{2,n}$. Since $T_x\H^{2,n}$ is identified with $x^\bot$, a point in $\Gg(\H^{2,n})$ is the same thing as an orthogonal splitting $\R^{2,n+1}= T \oplus l \oplus N$, where $T$ is an oriented positive definite $2$-plane, $l$ is a negative definite line, and $N=(l\oplus T)^\bot$ is a negative definite subspace of dimension $n$. We call $\Gg(\H^{2,n})$ the \textit{main Grassmannian}.

The group $\SO_0(2,n+1)$ acts transitively on $\Gg(\H^{2,n})$ and the stabilizer of a point is conjugated to the subgroup  $\text{H}:=\SO(2)\times \text{S}(\mathrm{O}(1)\times \mathrm{O}(n))$. Hence, the main Grassmannian is identified with the reductive homogeneous space $\SO_0(2,n+1)/\text{H}.$

The injections $\iota_1: \text{H}\hookrightarrow \text{S}(\text{O}(2,1)\times\text{O}(n))$ and $\iota_2: \text{H} \hookrightarrow \SO(2)\times \SO(n+1)$ define projections 
\[\xymatrix{p_1: \Gg(\H^{2,n}) \to \Gr_{(2,1)}\big(\R^{2,n+1} \big)&\text{and }&p_2: \Gg(\H^{2,n}) \to \Gr_{(2,0)}\big(\R^{2,n+1}\big)~,}\]
where $\Gr_{(2,1)}\big(\R^{2,n+1} \big)\cong \SO_0(2,n+1)/ \text{S}(\text{O}(2,1)\times\text{O}(n))$ and $\Gr_{(2,0)}\big(\R^{2,n+1}\big)= \SO_0(2,n+1)/\SO(2)\times\SO(n+1)$ are respectively the  Grassmannian of signature $(2,1)$ and $(2,0)$ linear subspaces of $\R^{2,n+1}$.\\

Let $\rho$ be a representation of $\Gamma$ into $\SO_0(2,n+1)$ and $u:\widetilde{\Sigma} \to \H^{2,n}$ be a $\rho$-equivariant space-like immersion. At every point $x$ in $\widetilde \Sigma$, the tangent and normal spaces of $u$  define a $\rho$-equivariant splitting of $\R^{2,n+1}$ as $T^u_x \oplus u(x) \oplus N^u_x$.

\begin{defi}\label{d-Gaussmaps}
The \textit{main Gauss map} of a $\rho$-equivariant space-like immersion $u:\widetilde{\Sigma} \hookrightarrow \H^{2,n}$ is the $\rho$-equivariant map
\[\begin{array}{llll}\Gg: & \widetilde{\Sigma}  & \longrightarrow & \Gg(\H^{2,n}) \\
& x & \longmapsto & T^u_x\oplus u(x)\oplus N^u_x \end{array}.\]
The \textit{first} and \textit{second Gauss map} are respectively defined to be $G_1= p_1\circ \mathcal{G}$ and $G_2=p_2 \circ \mathcal{G}$.
\end{defi}

Since the main Gauss map is $\rho$-equivariant, it induces a reduction of structure group of the flat principal $\SO_0(2,n+1)$-bundle associated to $\rho$ to an $\textrm{H}$-bundle. This reduction is given by the splitting of the flat $\R^{2,n+1}$-bundle associated to $\rho$ as
\[T^u \oplus l^u \oplus N^u,\]
where $l^u_x=u(x)$. In particular, the Toledo invariant of $\rho$ is given by the Euler class of the $\SO(2)$-bundle $T^u$. Since $d u$ identifies $T\Sigma$ with $T^u$, we obtain the following:
\begin{prop} \label{p:SpacelikeImmersion=>Maximal}
Let $\rho$ be a representation of $\Gamma$ into $\SO_0(2,n+1)$. If there exists a $\rho$-equivariant space-like immersion of $\widetilde{\Sigma}$ into $\H^{2,n}$, then
\[\vert \tau(\rho)\vert =  (2g-2)~.\\ \]
\end{prop}

\begin{rmk}
We will prove the converse of this proposition in the next subsection.
\end{rmk}

Finally, let us describe the Killing metric on $\Gg(\H^{2,n})$. The Killing form on $\mathfrak{so}(2,n+1)\subset \mathfrak{sl}(n+3,\R)$ is given by
\[\langle M,N\rangle = (n+3)\tr(MN).\]
Its restriction to the Lie algebra of $\text{H}$ is non-degenerate, and so it induces a pseudo-Riemannian metric $g_\Gg$ of signature $(2n+2,n)$ on $\Gg(\H^{2,n})$.

Given a point $p\in \Gg(\H^{2,n})$ corresponding to a splitting $\R^{2,n+1}=T\oplus l \oplus N$, we have an identification
\[ T_p\Gg(\H^{2,n})= \Hom(l,T) \oplus \Hom(l,N) \oplus \Hom(T,N).\]
If $\varphi=(\varphi_1,\varphi_2,\varphi_3)\in T_p\Gg(\H^{2,n})$ is a tangent vector, its norm with respect to the Killing metric is given by
\[\Vert\varphi\Vert^2_{\Gg} = (n+1)\left( \tr(\varphi_1\varphi_1^\dagger)-\tr(\varphi_2\varphi_2^\dagger)+\tr(\varphi_3\varphi_3^\dagger) \right),\]
where $\varphi_i^\dagger$ is obtained from the dual of $\varphi_i$ using the identifications $T\cong T^*,~l\cong l^*$ and $N\cong N^*$ given by the \emph{positive definite} quadratic form $\mathbf{q}_T\oplus (-\mathbf{q}_{l\oplus N})$ on $\R^{n+3}$.

Note that when $\mathcal{G}: \widetilde\Sigma \longrightarrow \mathcal{G}(\H^{2,n})$ is the main Gauss map of $u: \widetilde\Sigma \to \H^{2,n}$, we have \[d\mathcal{G}=(du,0,\mathrm{II})~,\] 
where $du: T\widetilde\Sigma \to T^u$ is an isomorphism and $\mathrm{II}: T\widetilde\Sigma \to \Hom(T^u,N^u)$ is the second fundamental form of $u$ (this is actually an equivalent definition of $\mathrm{II}$).

The same construction applies to the homogeneous spaces $\Gr_{(2,1)}\big(\R^{2,n+1} \big)$ and $\Gr_{(2,0)}\big(\R^{2,n+1} \big)$ where the Killing form induces a metric of signature $(2n,n)$ and $(2n+2,0)$ respectively. The induced metric on $\Gr_{(2,0)}\big(\R^{2,n+1} \big)$ is the Riemannian metric of the symmetric space of $\SO_0(2,n+1)$.

\subsection{Extremal surfaces}\label{ss:extremalsurfaces}
In this section, we show that given a maximal representation $\rho : \Gamma \to \SO_0(2,n+1)$, there exists a $\rho$-equivariant \emph{maximal space-like embedding} from $\widetilde{\Sigma}$ into $\H^{2,n}$, proving in particular the converse of Proposition \ref{p:SpacelikeImmersion=>Maximal}.

Let us first recall some classical facts about extremal immersions into pseudo-Riemannian manifolds. We refer to \cite[Chapter 1.3]{anciaux} for more details.

Let $(M,g)$ be a pseudo-Riemannian manifold of signature $(p,q)$ with $p\geq 2$, $\rho$ be a representation of $\Gamma$ into $\Isom(M)$ and $u:\widetilde{\Sigma} \to M$ be a $\rho$-equivariant space-like immersion.
Recall that the second fundamental form $\mathrm{II}$ of $u$ is a symmetric $2$-form on $\Sigma$ with values in the normal bundle $N^u$. The \textit{mean curvature vector field} of the immersion $u$ is given by 
$$H(u):=\text{tr}_{g_T} (\mathrm{II}^u)\in \Omega^0(N^u)$$
(i.e. $H(u)_x = \mathrm{II}^u_x(e_1,e_1) + \mathrm{II}^u_x(e_2,e_2)$, where $(e_1,e_2)$ is an orthonormal basis of $(T_x\Sigma, g_T)$).
The following is classical:
\begin{prop}
The mean curvature vector field $H(u)$ vanishes identically if and only if the space-like immersion $u$ is a critical point of the \emph{area functional} which associates to $u$ the total area of the metric $g_T$.
\end{prop}

We will call such an immersion an \textit{extremal immersion} and its image an \emph{extremal surface}. When $(N^u,g_N)$ is positive definite (for example when $(M,g)$ is Riemannian), an extremal immersion locally minimizes the area and will be called a \textit{minimal immersion}. On the other hand, when $(N^u,g_N)$ is negative definite (i.e. when $M$ has signature $(2,q)$), an extremal immersion locally maximizes the area and will be called a \textit{maximal immersion}.

\begin{rmk}
An immersion $u: \widetilde{\Sigma} \to (M,g)$ is extremal if and only if $u$ is harmonic with respect to the metric $g_T$ on $\Sigma$. This can be reformulated as follows, if $X$ is a Riemann surface structure on $\Sigma$ and $u:\widetilde{X} \to \H^{2,n}$ is a space-like immersion which is conformal, then $u$ is an extremal immersion if and only if it is harmonic. (Here, ``harmonic'' refers to the general notion of harmonic maps between (pseudo-)Riemannian manifolds.)
\end{rmk}

We can now state our existence theorem for maximal representations: 

\begin{prop}\label{p:ExistenceMaximalSurface}
Let $\rho:\Gamma \longrightarrow \SO_0(2,n+1)$ be a maximal representation. Then there exists a $\rho$-equivariant maximal space-like embedding $u: \widetilde\Sigma \longrightarrow \H^{2,n}$.
\end{prop}

\begin{proof}
Recall that the energy function  $\EE_\rho : \Teich(\Sigma)\to \R$ of a representation $\rho$ is given by \eqref{eq:Energy on Teich}. 
Let $X\in\Teich(\Sigma)$ be a critical point of the energy function, such an $X$ exists by Proposition \ref{p:Labourie Existence}. 
By Theorem \ref{p:decompositionbundle}, the flat $\R^{2,n+1}$-bundle $E_\rho$ with holonomy $\rho$ decomposes orthogonally as
$$E_\rho=U \oplus \ell \oplus V~,$$
where $\ell$ is a negative definite line sub-bundle, $U$ is positive definite of rank 2 and $V$ is negative definite of rank $n$.

The pullback $\widetilde\ell$ of $\ell\subset E_\rho$ to $\widetilde X$ defines a $\rho$-equivariant map $u: \widetilde X \to \H^{2,n}$. By Corollary \ref{c:nablal}, 
$\nabla u$ is an isomorphism from $T\Sigma$ to $U$. Therefore $u$ is a space-like immersion, and the orthogonal splitting $E_\rho = U\oplus \ell \oplus V$ is the splitting $T^u \oplus l^u \oplus N^u$ given by the main Gauss map of $u$.

We now prove that $u$ is harmonic and conformal.

Over a local chart $U\subset \widetilde X$, the map $u$ can be lifted to a map into $\hat{\H}^{2,n}\subset \R^{2,n+1}$ (equivalently, $\widetilde\ell$ is locally spanned by a section $u$ of norm $-1$). The Levi-Civita connection of $\hat{\H}^{2,n}$ is the connection induced by the flat connection $\nabla$ on $\R^{2,n+1}$. Since $\hat{\H}^{2,n}\subset\R^{2,n+1}$ is umbilical, $u$ satisfies the harmonic equation of Proposition \ref{p-harmonicholomorphic} if and only if $\nabla_{\overline{\partial}_z}\nabla_{\partial_z} u$ is parallel to $u$ (here, $z$ is a complex coordinate on the local chart $U$).

Let 
$$(\mathcal{E},\Phi)=
	\xymatrix@R=0em{\Ii\mathcal{K}\ar[r]^1&\Ii\ar[r]^1&\Ii\mathcal{K}^{-1}\ar[ddl]^{\beta}\\&\oplus&\\&\Vv\ar[uul]^{\beta^\dagger}&},$$ 
be the Higgs bundle associated to $\rho$ as in Section \ref{Higgsbundleparametrization} and let $h$ be the Hermitian metric on $\mathcal{E}$ solving the self-duality equations \eqref{eq: SO(2,n+1) Higgs bundle Equations}. Decomposing the flat connection as 
$$\nabla = \nabla_h + \Phi + \Phi^*,$$
where $\nabla_h$ is the Chern connection of $(\mathcal{E},h)$, one gets
\begin{eqnarray*}
\nabla_{\overline{\partial}_z}\nabla_{\partial_z} u & = & \left[\big((\nabla_h^{0,1}+\Phi^*)(\overline{\partial}_z)\big)\circ (\nabla_h^{1,0} + \Phi)(\partial_z)\right] (u) \\
& = & \left[\big((\nabla_h^{0,1}+\Phi^*)(\overline{\partial}_z)\big)\circ \Phi(\partial_z)\right] (u) \\
& = & \left[\Phi^*\big(\overline{\partial}_z\big)\circ\Phi(\partial_z)\right](u).
\end{eqnarray*}
On the second line, we used the fact that the section $u$ is $\nabla_h$-parallel, while for the third line, we used the holomorphicity of $\Phi$.

In particular, $\Phi(\partial_z) u$ is a section of $\Ll^{-1}$.  Since the splitting $\mathcal{E}=\mathcal{L}\oplus \mathcal{I}\oplus \mathcal{L}^{-1} \oplus \mathcal{V}$ is orthogonal with respect to the metric $h$, $\Phi^*\big(\overline{\partial}_z\big)$ sends $\mathcal{L}^{-1}$ to $\mathcal{I}$. Hence, $\nabla_{\overline{\partial}_z}\nabla_{\partial_z} u$ is a section of $\ell$ and is thus is parallel to $u$. It follows that $u$ is harmonic.

Locally, the differential $du$ corresponds to $\nabla u=(\Phi+\Phi^*)u=(1+1^*)u$ which is nowhere vanishing. This  implies that $u$ is an immersion.

The Hopf differential of $u$ is locally given by
$$u^* q_{\H}^{2,0} = q_{\H}\big(\nabla_{\partial_z}u,\nabla_{\partial_z}u\big)dz^2,$$
where $q_\H$ is the $\C$-linear extension of the metric $g_\H$ on $\H^{2,n}$. But $\nabla_{\partial_z}u$ is a section of $\mathcal{L}^{-1}$ which is isotropic with respect to the $\C$-bilinear symmetric form $q$ on $\mathcal{E}$ given in \eqref{eq:q on Ee}. In particular, the Hopf differential of $u$ is zero. Thus $u^*g_{\H}$ is a conformal metric on $X$. Since the map $u$ is harmonic and conformal, it is a maximal immersion.  Finally, since $\rho(\Gamma)$ acts cocompactly on $u(\widetilde X)$, the pull-back metric is complete, and so $u$ is an embedding by Lemma \ref{l:GeometrySpacelikeSurface}.
\end{proof}
%

\begin{rmk}\label{r:geometricinterpretationofhiggsbundles}
The component of the Higgs field defined by $\beta\in \Omega^{1,0}\big(X,\Hom(\mathcal{L}^{-1},\mathcal{V})\big)$ is identified with the $(1,0)$-part of the second fundamental form $\mathrm{II}\in \Omega^1\big(X, \Hom(T^u, N^u)\big) $ of the maximal immersion $u$.
\end{rmk}

Finally, we show that the different Gauss maps (see Definition \ref{d-Gaussmaps}) are extremal immersions.

\begin{prop}\label{p-gaussmaps}
The main, first and second Gauss maps of the $\rho$-equivariant maximal embedding $u: \widetilde X \longrightarrow \H^{2,n}$ constructed above are extremal immersions.
\end{prop}

\begin{proof}
Since the calculations for each of the Gauss maps are similar, we will only prove the result for the main Gauss map.
It is proved by Ishihara in \cite{ishihara} that the three Gauss maps of a maximal immersion are harmonic. Thus, to prove the result we will show the Gauss maps are also conformal.

Consider a maximal embedding $u:\widetilde{X}\longrightarrow \H^{2,n}$, and let $\mathcal{G}: \widetilde X \longrightarrow \Gg(\H^{2,n})$ be its associated main Gauss map. As explained in Section \ref{subsection-Gaussmaps}, we have an identification $d\mathcal{G}=(du,0,\mathrm{II})$.

If $\partial\mathcal{G}$ denotes the $\mathbb{C}$-linear part of $d\mathcal{G}$ and $q_{\Gg}$ the $\C$-linear extension of $g_{\Gg}$, then $\partial\mathcal{G}=\partial u + \beta$ where $\beta$ is the $(1,0)$-part of the second fundamental form, and so $\beta$ is identified with the part of the Higgs field sending $\mathcal{L}^{-1}$ to $\mathcal{V}$ (see Remark \ref{r:geometricinterpretationofhiggsbundles}).

The Hopf differential of $\mathcal{G}$ is thus given by
\begin{eqnarray*}
\text{Hopf}(\mathcal{G}) & = &  q_{\Gg}(\partial\mathcal{G},\partial\mathcal{G}) \\
 & = & 2(n+1)\text{Hopf}(u) + 2(n+1)\text{tr}\big(\beta \beta^\dagger\big) \\
 & = & (n+1)\text{tr}\big(\beta \beta^\dagger\big) \\
 & = & 0.
 \end{eqnarray*}
For the last equation, we used the fact that $\beta^\dagger$ sends $\mathcal{V}$ to $\mathcal{L}$ (see Section \ref{Higgsbundleparametrization}).
Finally, a similar computation shows that $\mathcal{G}^*g_{\Gg}= (n+1)\Vert \Phi\Vert^2$. Since $\Phi$ is nowhere vanishing, we conclude that $\mathcal{G}^*g_{\Gg}$ is a space-like immersion.
\end{proof}

\begin{rmk}
The second Gauss map of $u$ is the $\rho$-equivariant harmonic map from $\widetilde{X}$ to the Riemannian symmetric space of $\SO_0(2,n+1)$.
\end{rmk}

\subsection{Uniqueness of the maximal surface}

Let $\rho\in\Rep^{max}(\Gamma,\SO_0(2,n+1))$ be a maximal representation. In this subsection, we prove the following theorem:

\begin{theo} \label{t:UniquenessMaximalSurfacePrecise}
Let $S_1$ and $S_2$ be two connected $\rho$-invariant maximal space-like surfaces in $\H^{2,n}$ on which $\rho(\Gamma)$ acts co-compactly. Then $S_1 = S_2$.
\end{theo}

As a corollary, we prove Labourie's conjecture for maximal representations into Hermitian Lie groups of rank $2$.

\begin{coro}\label{c:UniquenessCriticalPoint}
Let $\rho$ be a maximal representation from $\Gamma$ into a Hermitian Lie group of rank $2$. Then the energy function $\EE_\rho:~\Teich(\Sigma)\to \R$ defined in Section \ref{s:harmonicmetrics} has a unique critical point $X$. Moreover, the corresponding minimal immersion $f: \widetilde X \rightarrow \mathfrak{X}$ is an embedding.
\end{coro}

Note that when $n=1$, Theorem \ref{t:UniquenessMaximalSurfacePrecise} was obtained by Barbot, B\'eguin, Zeghib \cite{BBZ} and its corollary was obtained by Schoen \cite{schoen} (see Remark \ref{r:ComparisonBonsanteSchlenker} for details).

Before proving Theorem \ref{t:UniquenessMaximalSurfacePrecise}, we need to collect a few more facts about $\rho$-invariant maximal surfaces. Let $S$ be a connected $\rho$-invariant space-like surface in $\H^{2,n}$ on which $\rho(\Gamma)$ acts co-compactly. Note that $S$ is complete since it is a covering of the compact Riemannian manifold $\rho(\Gamma)\backslash S$. Thus the content of Section~\ref{ss:SpacelikeSurface} applies to $S$. In particular, $S$ is compactified by a circle in $\Ein^{1,n}$. We first prove that this circle coincides with the image of the $\rho$-equivariant boundary map $\xi: \partial_\infty \Gamma \to \Ein^{1,n}$ from Theorem \ref{t:AnosovCurve}.

\begin{lem} \label{l:GammaOrbitsS}
For every $\gamma \in \Gamma\backslash\{\mathrm{id}\}$, there exists a point $x\in S$ such that
\[\rho(\gamma)^n \cdot x \tend{n\to +\infty} \xi(\gamma_+)~.\]
\end{lem}

\begin{proof}
Fix $\gamma \in \Gamma\backslash\{\mathrm{id}\}$. Let $e_+$ and $e_- $ be isotropic vectors spanning  $\xi(\gamma_+)$ and $\xi(\gamma_-)$ respectively and normalized so that $\scal{e_+}{e_-} = 1$. By Corollary \ref{c:AnosovProximal}, there is a $\lambda >1$ such that $\rho(\gamma)\cdot e_+ = \lambda e_+$, $\rho(\gamma)\cdot e_- = \frac{1}{\lambda} e_-$ and the spectral radius of the restriction of $\rho(\gamma)$ to $V = (e_-\oplus e_+)^\perp$ is less than $\lambda$.

Let $x$ be a point in $S$ and $\hat{x}$ be a lift of $x$ in $\hat{\H}^{2,n}$. Up to scaling $e_-$ and $e_+$, we can write
\[\hat{x} = \alpha(e_- + e_+) + v~,\]
for some $\alpha \in \R$ and some $v\in V$. We thus have
\[\rho(\gamma)^n \cdot \hat{x} = \lambda^n \alpha e_+ + \lambda^{-n} \alpha e_- + \rho(\gamma)^n v~.\]
Since $\rho(\gamma)_{|V}$ has spectral radius strictly less than $\lambda$, we deduce that $\rho(\gamma)^n \cdot x$ converges (in $\ProjR{n+2}$) to $[e_+] = \xi(\gamma_+)$ unless $\alpha = 0$.

Assume by contradiction that $\rho(\gamma)^n \cdot x$ does not converge to $\xi(\gamma_+)$ for any $x \in S$. In this case, $S$ is included in the projectivization of $V$. This is impossible because the intersection of $\H^{2,n}$ with the projectivization of $V$ is a sub-manifold of signature $(1,n-1)$, and hence, cannot contain a space-like surface.
\end{proof}

\begin{coro} \label{p:BoundaryMaximalSurface}
The boundary of $S$ in $\Ein^{1,n}$ is the image of $\xi$.
\end{coro}

\begin{proof}
By Lemma \ref{l:GammaOrbitsS}, $\partial_\infty S$ contains $\xi(\gamma_+)$ for every $\gamma \in \Gamma$. Since the set $\{\gamma_+, \gamma \in \Gamma\}$ is dense in $\partial_\infty \Gamma$, we deduce that $\partial_\infty S$ contains the image of $\xi$. Since the image of $\xi$ is also a topological circle, we conclude that $\partial_\infty S$ is exactly the image of $\xi$.
\end{proof}

We will now prove that, if $S$ is maximal, then it is contained in the \emph{convex hull} of its boundary.

\begin{defi}
Let $\partial_\infty \hat{S}_0$ be one connected component of $\partial_\infty \hat{S}$. The convex hull of $\partial_\infty \hat{S}_0$ is the set of vectors $u\in \hat{\H}^{2,n}$ such that any linear form on $\R^{2,n+1}$ which is positive on $\partial_\infty \hat{S}_0$ is positive on $u$. The convex hull of $\partial_\infty S$, denoted $\Conv(\partial_\infty S)$, is the projection to $\H^{2,n}$ of the convex hull of either connected component of $\partial_\infty \hat{S}$.
\end{defi}

\begin{prop} \label{p:MinimalSurfaceinConvexHull}
Assume that $S$ is a maximal surface. Then $S$ is included in the convex hull of $\partial_\infty S$.
\end{prop}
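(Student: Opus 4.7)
The plan is to argue by contradiction via a maximum principle on $\hat S_0$, exploiting the umbilical embedding of $\hat{\H}^{2,n}$ in $\R^{2,n+1}$. Suppose $S \not\subset \Conv(\partial_\infty S)$ and fix one of the two lifts $\hat S_0 \subset \hat{\H}^{2,n}$. By the very definition of the convex hull, there exist $x_0 \in \hat S_0$ and a vector $v \in \R^{2,n+1}$ such that the linear function $f(y) := \mathbf{q}(v,y)$ is strictly positive on every representative of $\partial_\infty \hat S_0$ but satisfies $f(x_0) \leq 0$.

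The key identity is $\Delta_{\hat S_0} f = 2 f$, where $\Delta_{\hat S_0}$ is the Laplacian of the induced Riemannian metric on $\hat S_0$. To prove it, I use that the position vector $x$ is a unit time-like normal of $\hat{\H}^{2,n}$ in $\R^{2,n+1}$, which gives the umbilical formula $\mathrm{II}_{\hat\H\subset\R}(Y,Z) = \mathbf{q}(Y,Z)\, x$. Maximality of $\hat S_0$ in $\hat{\H}^{2,n}$ then implies that the mean curvature vector of $\hat S_0$ viewed inside $\R^{2,n+1}$ equals $2x$ (tracing over a $g$-orthonormal frame). Since $f$ is linear, its ambient Hessian vanishes, and the classical formula for the Laplacian of an ambient function restricted to a submanifold yields
\[ \Delta_{\hat S_0} f \;=\; \mathbf{q}\bigl(v,\, H_{\hat S_0\subset\R^{2,n+1}}\bigr) \;=\; 2\,\mathbf{q}(v,x) \;=\; 2f. \]

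Next I argue that $f|_{\hat S_0}$ diverges to $+\infty$ at $\partial_\infty \hat S_0$. Using the warped product of Proposition \ref{p:WarpedProduct} and the Lipschitz graph description of Proposition \ref{p:SLipschitzGraph}, the function $f|_{\hat S_0}$ equals $(1-\Vert u\Vert^2)^{-1}$ times a continuous function on $\overline{\D}$ whose boundary value at $\Vert u\Vert = 1$ is twice the pairing of $v$ with the corresponding representative of the boundary ray, hence strictly positive by hypothesis. Consequently $f|_{\hat S_0} \to +\infty$ uniformly, the sublevel set $\{f \leq 0\}$ is a non-empty compact subset of $\hat S_0$, and $f$ attains a non-positive minimum at some interior point $p_0$.

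The strong maximum principle now closes the argument. The linear elliptic operator $L := \Delta_{\hat S_0} - 2$ has non-positive zeroth-order coefficient, so a $C^2$ function satisfying $L u \geq 0$ on the connected open surface $\hat S_0$ and achieving a non-negative interior maximum must be constant. Applied to $u := -f$, which satisfies $L(-f) = 0$ and attains its non-negative maximum at $p_0$, this forces $f$ to be constant equal to $f(p_0) \leq 0$, contradicting $f \to +\infty$ at infinity. The only subtle point in this strategy is the Laplacian computation — in particular tracking the sign of the zeroth-order term so that the maximum principle applies in the desired direction; everything else is standard elliptic theory combined with the boundary geometry already established in Propositions \ref{p:WarpedProduct} and \ref{p:SLipschitzGraph}.
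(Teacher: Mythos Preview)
Your proof is correct and follows essentially the same route as the paper: both compute that a linear form restricted to $\hat S_0$ is an eigenfunction of the intrinsic Laplacian with positive eigenvalue and then invoke the maximum principle. Your version is a bit more careful in justifying the boundary behavior (you show $f\to+\infty$ via the warped product, whereas the paper merely asserts positivity near $\partial_\infty\hat S_0$), and you get the correct constant $\Delta f = 2f$ where the paper writes $\Delta\phi = \phi$; neither of these differences affects the argument.
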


\begin{proof}
Let $\hat{S}_0$ be a connected component of $\hat{S}$, and let $\phi$ be a linear form on $\R^{2,n+1}$ which is positive on $\partial_\infty \hat{S}_0$. Let $u_0$ be a point in $\hat{S}_0$ and $\dot{u}_0$ be a tangent vector to $\hat{S}_0$ at $u_0$. Let $\mathrm{II}$ denote the second fundamental form of $\hat{S}_0$ in $\hat{\H}^{2,n}$ and let $\left(u(t)\right)_{-\epsilon < t < \epsilon}$ be a geodesic on $\hat{S}_0$ such that $u(0)= u_0$ and $u'(0) = \dot{u}_0$. Then $\mathrm{II}(\dot{u}_0,\dot{u}_0)$ is the orthogonal projection of $u''(0)$ to $T_{u_0}\hat{\H}^{2,n}$. Since $\hat{\H}^{2,n}\subset \R^{2,n+1}$ is umbilical, we have
\begin{equation}\label{eq:SecondDerivativeSurfaceGeodesic}
\mathrm{II}(\dot{u}_0,\dot{u}_0) = u''(0) - q(\dot{u}_0) u_0~,
\end{equation}
and thus
\[\Hess_{u_0}\phi_{|\hat{S}_0}(\dot{u}_0, \dot{u}_0) = \frac{d^2}{d t^2}_{\vert t=0} \phi(u(t)) = q\mathbf (\dot{u}_0)\phi(u_0) + \phi(\mathrm{II}(\dot{u}_0,\dot{u}_0))~.\]
Since $\hat{S}_0$ is a maximal surface, the trace of $\mathrm{II}$ with respect to the metric induced by $\mathbf{q}$ on $\hat{S}_0$ vanishes. We deduce that $\phi$ satisfies the equation
\[\Delta \phi_{|\hat{S}_0} = \phi_{|\hat{S}_0}~,\]
where $\Delta$ is the Laplace operator of the metric induced by $\mathbf{q}$ on $\hat{S}_0$.\footnote{Here we use the convention that $\Delta f = \frac{1}{2} \Tr \,\Hess(f)$.}

Now, by assumption, $\phi_{|\hat{S}_0}$ is positive in a neighborhood of $\partial_\infty \hat{S}_0$. The classical maximum principle (Lemma \ref{l:StrongMaxPrinciple}) then implies that $\phi$ is positive on $\hat{S}_0$.
Therefore, $\hat{S}_0$ is included in $\Conv\big(\partial_\infty\hat{S}_0 \big)$ and $S$ is included in $\Conv(\partial_\infty S)$.
\end{proof}

We now turn to the proof of Theorem \ref{t:UniquenessMaximalSurfacePrecise}. Let $S_1$ and $S_2$ be two maximal $\rho$-invariant space-like surfaces in $\H^{2,n}$ on which $\rho$ acts co-compactly. Assume by contradiction that $S_1$ and $S_2$ are distinct. By Corollary \ref{p:BoundaryMaximalSurface}, $S_1$ and $S_2$ have the same boundary in $\Ein^{1,n}$. Let us start by lifting $S_1$ and $S_2$ to $\hat{\H}^{2,n}$ so that the two lifts have the same boundary. To simplify notations, we still denote those lifts by $S_1$ and $S_2$. Recall that $\scal{\cdot}{\cdot}$ denotes the symmetric bilinear form associated to the quadratic form $\mathbf{q}$ on $\R^{2,n+1}$.

\begin{lem} \label{l:InfMoreThan0}
For all $(u,v)\in S_1\times S_2$, 
\[\scal{u}{v} < 0~.\]
\end{lem}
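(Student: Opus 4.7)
The strategy is to apply the maximum-principle convexity statement of Proposition \ref{p:MinimalSurfaceinConvexHull} to linear forms of the form $\phi_u := -\langle u, \cdot \rangle$ for $u \in S_1$, using Lemma \ref{l:HyperplaneSeparatesS} to control the sign of $\phi_u$ on the common ideal boundary $\partial_\infty S_1 = \partial_\infty S_2$.

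First, I would fix $u \in S_1$ and observe the basic identity $\phi_u(u) = -\langle u, u\rangle = 1 > 0$, since $u$ lies in $\hat{\H}^{2,n}$. Next, by Lemma \ref{l:HyperplaneSeparatesS}, the hyperplane $u^\perp$ meets neither $S_1$ nor $\partial_\infty S_1$; in particular, $\phi_u$ has no zero on $\partial_\infty S_1$. Since $\partial_\infty S_1$ is a connected topological circle (as shown in the proof of Corollary \ref{p:BoundaryMaximalSurface}) and coincides with $\partial_\infty S_2$ by our choice of lifts, the continuous non-vanishing function $\phi_u$ has constant sign on this common boundary.

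I would then pin down this sign as positive by ruling out the negative case. If $\phi_u < 0$ on $\partial_\infty S_1$, then $-\phi_u$ is a linear form positive on $\partial_\infty \hat{S}_1$, so Proposition \ref{p:MinimalSurfaceinConvexHull} (applied to the maximal surface $S_1$, which lies in its own boundary's convex hull) forces $-\phi_u > 0$ on $S_1$. Evaluating at $u$ itself gives $-\phi_u(u) = -1 < 0$, a contradiction. Hence $\phi_u > 0$ on $\partial_\infty S_1 = \partial_\infty S_2$.

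Finally, applying Proposition \ref{p:MinimalSurfaceinConvexHull} to the maximal surface $S_2$ with the same linear form $\phi_u$ (now known to be positive on $\partial_\infty S_2$) yields $\phi_u > 0$ on $S_2$, which is precisely the assertion $\langle u, v\rangle < 0$ for every $v \in S_2$. There is no real obstacle here: the entire content is packaged in Lemma \ref{l:HyperplaneSeparatesS} and Proposition \ref{p:MinimalSurfaceinConvexHull}; the only subtlety is being careful that the lifts of $S_1$ and $S_2$ have been chosen to share the same connected component of $\partial_\infty \hat{S}$, so that the convex hulls $\Conv(\partial_\infty \hat{S}_1)$ and $\Conv(\partial_\infty \hat{S}_2)$ coincide and the maximum principle can be applied to both surfaces against the same family of linear forms.
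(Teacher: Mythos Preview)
Your argument is correct and follows essentially the same route as the paper's proof: both use Lemma \ref{l:HyperplaneSeparatesS} to control the sign of $\langle u,\cdot\rangle$ on the common boundary $\partial_\infty S_1=\partial_\infty S_2$, and then invoke Proposition \ref{p:MinimalSurfaceinConvexHull} for $S_2$ to propagate this sign to all of $S_2$. The only difference is that the paper asserts directly that $\langle u,\cdot\rangle<0$ on $\partial_\infty S_1$ as a consequence of Lemma \ref{l:HyperplaneSeparatesS} (this negativity is visible in the \emph{proof} of that lemma, though not in its statement), whereas you supply an independent argument for the sign by applying Proposition \ref{p:MinimalSurfaceinConvexHull} to $S_1$ itself and reaching a contradiction at $u$. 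Your extra step is a perfectly good way to make the sign determination self-contained.
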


\begin{proof}
By Lemma \ref{l:HyperplaneSeparatesS}, for any $u \in S_1$, the linear form $\scal{u}{\cdot}$ is negative on $\partial_\infty S_1$. Moreover, since $\partial_\infty S_2 = \partial_\infty S_1$, Proposition \ref{p:MinimalSurfaceinConvexHull} implies that $S_2$ is included in $\Conv\left(\partial_\infty S_1\right)$. Therefore, the linear form $\scal{u}{\cdot}$ is negative on $S_2$.
\end{proof}

\begin{lem} \label{l:InfLessThan1}
If $S_1 \neq S_2$, then there exists $(u, v)\in S_1 \times S_2$ such that 
\[\scal{u}{v} > -1~.\]
\end{lem}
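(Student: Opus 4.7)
The plan is to prove the contrapositive: assuming $\scal{u}{v}\leq -1$ for every $(u,v)\in S_1\times S_2$, I will show $S_1=S_2$, contradicting the standing hypothesis.

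Fix an orthogonal decomposition $\R^{2,n+1}=\R^{2,0}\oplus\R^{0,n+1}$ and use the corresponding identification $F:\D\times\S^n\to\hat{\H}^{2,n}$ from Proposition \ref{p:WarpedProduct}. By Proposition \ref{p:SLipschitzGraph}, the chosen lifts $S_1$ and $S_2$ are graphs of Lipschitz maps $f_1,f_2:\D\to\S^n$ under this identification, so it suffices to show $f_1\equiv f_2$ on $\D$.

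For each $x\in\D$, apply the standing hypothesis to the pair $u=F(x,f_1(x))\in S_1$ and $v=F(x,f_2(x))\in S_2$. A direct computation from the explicit formula for $F$, using that the pairing $\scal{\cdot}{\cdot}$ decomposes as the standard inner product on $\R^2$ minus the standard inner product on $\R^{n+1}$, yields
\[\scal{u}{v}=\frac{4|x|^2-(1+|x|^2)^2\,\langle f_1(x),f_2(x)\rangle_{\R^{n+1}}}{(1-|x|^2)^2}.\]
Using the algebraic identity $4|x|^2+(1-|x|^2)^2=(1+|x|^2)^2$, the inequality $\scal{u}{v}\leq -1$ rearranges to $\langle f_1(x),f_2(x)\rangle_{\R^{n+1}}\geq 1$. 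Since $f_1(x),f_2(x)\in\S^n$, Cauchy--Schwarz forces $f_1(x)=f_2(x)$. As $x$ was arbitrary, $f_1\equiv f_2$, hence $S_1=S_2$.

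The key conceptual point is to probe the hypothesis using pairs $(u,v)$ projecting to the same point of the $\D$-factor of $\hat{\H}^{2,n}$: on such pairs the threshold $\scal{u}{v}=-1$ is exactly sharp for graph equality $f_1(x)=f_2(x)$, so saturating the inequality everywhere on $\D$ immediately forces the two graphs to coincide. No maximum principle or other global analytic argument is required beyond the Lipschitz graph description already established in Proposition \ref{p:SLipschitzGraph}; in particular the proof does not even use that the surfaces are maximal, only that they are space-like and the chosen lifts share boundary.
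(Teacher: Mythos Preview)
Your proof is correct and follows essentially the same idea as the paper's: compare a point of $S_1$ with the point of $S_2$ lying over the same point of the $\D$-factor in the warped-product model, and observe that $\scal{u}{v}\leq -1$ forces the $\S^n$-coordinates to agree. The only cosmetic difference is that the paper first picks a point $x\in S_1\setminus S_2$ and recenters the warped-product coordinates so that $x=(0,v_0)$, which reduces your general computation to the trivial case $|x|=0$; your version simply carries out the computation for arbitrary $x\in\D$ without recentering.
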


\begin{proof}
Assume that $S_1$ is not included in $S_2$. Let $x$ be a point in $S_1$ which is not in $S_2$. As in Proposition \ref{p:WarpedProduct}, choose an identification of $\hat{\H}^{2,n}$ with $\D \times \S^n$ in which $x$ is identified to $(0,v_1)$ for some $v_1 \in \S^n$. Since $S_2$ is the graph of some function $f:\D \to \S^n$, there exists $v_2 \in \S^n$ such that $y = (0,v_2) \in S_2$. Since $x\not\in S_2$, we have $v_2 \neq v_1$ and therefore
\[\scal{x}{y} = - \scal{v_1}{v_2} > -1~.\]
\end{proof}

\begin{lem}
The function \[\function{B}{S_1 \times S_2}{\R_{<0}}{(u,v)}{\scal{u}{v}}\] achieves its maximum.
\end{lem}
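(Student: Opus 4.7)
My plan is to show that the maximum is achieved by a two-step argument: first fix $u \in S_1$ and show the partial function $B_u = B(u,\cdot)$ attains its maximum on $S_2$ (because it tends to $-\infty$ at $\partial_\infty S_2$); then define $M(u) = \max_{v\in S_2} B(u,v)$, observe it is $\Gamma$-invariant, and deduce via the co-compact action on $S_1$ that $M$ attains its maximum. The direct approach of compactifying $S_1 \times S_2$ fails because the diagonal $\Gamma$-action there is not co-compact, so I need to break the symmetry.

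The key step is showing that for each fixed $u \in S_1$, the function $B_u$ is proper on $S_2$ in the sense that $B_u(v) \to -\infty$ as $v \to \partial_\infty S_2$. I would use Proposition \ref{p:SLipschitzGraph} to identify $S_2$ (lifted compatibly to $\hat{S}_2 \subset \hat{\H}^{2,n}$ with $\partial_\infty \hat{S}_2 = \partial_\infty \hat{S}_1$) with a Lipschitz graph $z \mapsto F(z,f(z))$ over $\D$. Writing $\hat{u} = (u_1,u_2) \in \R^2 \times \R^{n+1}$, a direct computation in the warped product coordinates of Proposition \ref{p:WarpedProduct} yields
\[\langle \hat{u}, F(z,f(z))\rangle \;=\; \frac{1}{1-\|z\|^2}\Bigl(2\langle u_1, z\rangle - (1+\|z\|^2)\langle u_2, f(z)\rangle\Bigr).\]
As $z \to z_\infty \in \partial \D$, the numerator tends to $2\langle \hat{u}, V(z_\infty)\rangle$ where $V(z_\infty) = (z_\infty, f(z_\infty))$ is the natural isotropic representative of the corresponding boundary point of $\partial_\infty \hat{S}_2 = \partial_\infty \hat{S}_1$. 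By Lemma \ref{l:InfMoreThan0} this limit is $\le 0$, while Lemma \ref{l:HyperplaneSeparatesS} applied to $\hat{u} \in \hat{S}_1$ forbids it from being $0$. Hence the numerator tends to a strictly negative constant and $B_u(v) \to -\infty$. In particular $B_u$ attains its maximum in the interior of $S_2$.

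Next I set $M(u) = \max_{v \in S_2} B(u,v)$. The $\rho(\Gamma)$-invariance of $S_2$ together with the fact that $\rho(\Gamma)$ preserves $\mathbf{q}$ gives $M(\rho(\gamma) u) = M(u)$, so $M$ descends to the compact quotient $S_1/\rho(\Gamma)$. To conclude it suffices to show $M$ is continuous, and for this it is enough to prove upper semi-continuity (lower semi-continuity is immediate from $M(u) \ge B(u,v_0)$ for any fixed $v_0$). Given $u_n \to u_0$, the uniform version of the estimate above -- the constant $\langle \hat{u}, V(z_\infty)\rangle$ depends jointly continuously on $(u,z_\infty)$ in the compact set $\{u_n\}\cup\{u_0\} \times \partial \D$ and is strictly negative there, hence bounded above by some $-\varepsilon < 0$ -- forces the maximizers $v_n$ of $B_{u_n}$ to stay in a fixed compact subset of $S_2$. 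Extracting a subsequence $v_n \to v_0 \in S_2$ yields $\limsup M(u_n) = \lim B(u_n,v_n) = B(u_0,v_0) \le M(u_0)$. Thus $M$ is continuous, attains its maximum on the compact quotient, and $B$ attains its maximum on $S_1 \times S_2$.

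I expect the main obstacle to be the first step: one must check that passing to the $\hat{\H}^{2,n}$ picture with matching lifts legitimately allows the two hyperplane-separation Lemmas \ref{l:HyperplaneSeparatesS} and \ref{l:InfMoreThan0} -- which are each applied to a single surface -- to be combined to control the sign of $\langle \hat{u}, V(z_\infty)\rangle$ when $\hat{u}\in \hat{S}_1$ but $V(z_\infty) \in \partial_\infty \hat{S}_2$. The rest of the argument is essentially a soft compactness-with-properness exercise.
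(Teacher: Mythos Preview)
Your argument is correct, but it takes a longer route than the paper's. The paper proceeds directly with a maximizing sequence $(u_n,v_n)\in S_1\times S_2$: using the $\rho(\Gamma)$-invariance of $B$ and the co-compactness of the action on $S_1$ alone, one may arrange that $u_n$ stays in a compact set and hence converges to some $u\in S_1$. If $v_n$ escaped to infinity, then after rescaling $\epsilon_n v_n\to v\in\partial_\infty S_2=\partial_\infty S_1$ with $\langle u,v\rangle=\lim\epsilon_n B(u_n,v_n)=0$ (since $B(u_n,v_n)$ is bounded), contradicting Lemma~\ref{l:HyperplaneSeparatesS}. This avoids your two-step maximization, the explicit warped-product formula, and the continuity argument for $M$. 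Your remark that ``the direct approach of compactifying $S_1\times S_2$ fails because the diagonal $\Gamma$-action there is not co-compact'' is a slight misreading of the obstruction: the paper does work with a maximizing sequence on the product, but only invokes co-compactness on the first factor, and then the hyperplane-separation lemma takes care of the second factor in one line. What your approach buys is the stronger pointwise statement that $B(u,\cdot)\to-\infty$ at $\partial_\infty S_2$ for each fixed $u$ (the paper only needs and only proves boundedness), at the cost of more bookkeeping.
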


\begin{proof}
Let $(u_n,v_n) \in \left(S_1 \times S_2 \right)^\N$ be a maximizing sequence for $B$. Since $\rho(\Gamma)$ preserves $B$ and acts co-compactly on $S_1$, we can assume that $\left(u_n\right)_{n\in\N}$ is bounded in $S_1$. Up to extracting a sub-sequence, we can assume that $u_n$ converges to $u \in S_1$. By Lemma \ref{l:InfLessThan1}, we know that $B(u_n,v_n) > -1$ for $n$ sufficiently large. 
Assume by contradiction that $(v_n)_{n\in\mathbb{N}}$ is unbounded in $S_2$. Up to extracting a sub-sequence, there exists $\epsilon_n\tend{n\to +\infty} 0$ such that $\epsilon_n v_n$ converges to a vector $v \in \partial_\infty S_2$. Since $B(u_n,v_n)$ is bounded, we have \[B(u,v) = \lim_{n\to +\infty} \epsilon_n B(u_n,v_n) = 0~.\]
The vector $v$ is thus in $u^\perp$. Since $\partial_\infty S_1 = \partial_\infty S_2$, this contradicts Lemma \ref{l:HyperplaneSeparatesS}.
\end{proof}

We now have all the tools needed to apply the minimum principle to $B$ and prove Theorem \ref{t:UniquenessMaximalSurfacePrecise}.

\begin{proof}[Proof of Theorem \ref{t:UniquenessMaximalSurfacePrecise}]
Let $(u_0,v_0) \in S_1\times S_2$ be a point where $B$ achieves its maximum. By Lemmas \ref{l:InfMoreThan0} and \ref{l:InfLessThan1}, we have
\[-1 < B(u_0,v_0) < 0~.\]
For $\dot{u}_0 \in T_{u_0}S_1$ and $\dot{v}_0 \in T_{v_0}S_2$, let $(u(t))_{t\in(-\epsilon, \epsilon)}$ and $(v(t))_{t\in(-\epsilon, \epsilon)}$ be geodesic paths on $S_1$ and $S_2$ respectively, satisfying $u(0) = u_0$, $u'(0) = \dot{u}_0$ and $v(0) = v_0$, $v'(0) = \dot{v}_0$.

Since $B(u(t),v_0)$ is maximal at $t=0$, we have $\scal{\dot{u}_0}{v_0} = 0$. Since $\mathbf{q}(u(t)) = -1$ for all $t$, we also have $\scal{\dot{u}_0}{u_0} = 0$. Similarly, we have 
$\scal{\dot{v}_0}{u_0} = \scal{\dot{v}_0}{v_0} = 0$. We thus obtain that $T_{u_0} S_1$ and $T_{v_0} S_2$ are both orthogonal to $u_0$ and $v_0$. 

Let $\mathrm{II}_1: T_{u_0}S_1 \times T_{u_0} S_1 \to u_0^\perp$ and $\mathrm{II}_2: T_{v_0}S_2 \times T_{v_0} S_2 \to v_0^\perp$ denote respectively the second fundamental forms of $S_1$ and $S_2$ in $\hat{\H}^{2,n}$. Recall that we have
\[u''(0) = \mathrm{II}_1(\dot{u}_0,\dot{u}_0) + q(\dot{u}_0) u_0\]
and
\[v''(0) = \mathrm{II}_2(\dot{v}_0,\dot{v}_0) + q(\dot{v}_0) v_0\]
(see Equation \eqref{eq:SecondDerivativeSurfaceGeodesic}). The second derivative of $B(u(t),v(t))$ at $t=0$ is given by
\begin{equation} \label{eq:SecondVariationScalarProduct}
\ddt_{|t=0} B(u(t),v(t)) = \xymatrix@=.2em{2 \scal{\dot{u}_0}{\dot{v}_0} + \scal{\mathrm{II}_1(\dot{u}_0,\dot{u}_0)}{v_0} + \scal{\mathrm{II}_2(\dot{v}_0,\dot{v}_0)}{u_0}\\ +\ \mathbf{q}(\dot{u}_0) \scal{u_0}{v_0} + \mathbf{q}(\dot{v}_0) \scal{u_0}{v_0}~.}
\end{equation}
 Our goal is to find $\dot{u}_0$ and $\dot{v}_0$ such that this second derivative is positive.

Since $S_1$ is a maximal surface in $\hat{\H}^{2,n}$, the quadratic form $\beta_1:w \mapsto \scal{\mathrm{II}_1(w,w)}{v_0}$ on $\left(T_{u_0}(S_1),\mathbf{q}\right)$ has two opposite eigenvalues $\lambda$ and $-\lambda$. Similarly, the quadratic form $w \mapsto \scal{\mathrm{II}_2(w,w)}{u_0}$ on $\left(T_{v_0}(S_2),\mathbf{q}\right)$ has two opposite eigenvalues $\mu$ and $-\mu$.
Up to switching $S_1$ and $S_2$, we may assume that $\lambda \geq \mu\geq 0$. We now choose $\dot{u}_0$ and $\dot{v}_0$ such that 
\[\xymatrix{\beta_1(\dot{u}_0) = \lambda~, & \mathbf{q}(\dot{u}_0) = 1&\text{and}&\dot{v}_0 =\dfrac{p(\dot{u}_0)}{\sqrt{\mathbf{q}(p(\dot{u}_0))}}}\] where $p:\{u_0,v_0\}^\perp \to T_{v_0}S_2$ denotes the orthogonal projection. 

Since $\mathbf{q}(u_0) = \mathbf{q}(v_0) = -1$ and $|\scal{u_0}{v_0}| < 1$, the restriction of $\mathbf{q}$ to the plane $P_0\subset\R^{2,n+1}$ spanned by $\{u_0,v_0\}$ is negative definite. The restriction of $\mathbf{q}$ to $P_0^\perp$ thus has signature $(2,n-2)$. Since $T_{v_0}S_2$ is a space-like plane in $P_0^\perp$, we can write $\dot{u}_0 = p(\dot{u}_0) + w$ where $\mathbf{q}(w) \leq 0$. We thus have
\[\mathbf{q}(p(\dot{u}_0)) = \mathbf{q}(\dot{u}_0) - \mathbf{q}(w) \geq \mathbf{q}(\dot{u}_0) = 1~,\]
and therefore
\[\scal{\dot{u}_0}{\dot{v}_0} = \sqrt{\mathbf{q}(p(\dot{u}_0))} \geq 1~.\]

Let us now return to Equation \eqref{eq:SecondVariationScalarProduct}. With our choices of $\dot{u}_0$ and $\dot{v}_0$, we have $\beta_1(\dot{u}_0) = \lambda$ and $\mathrm{II}_2(\dot{v}_0) \geq -\mu \geq -\lambda$. Since $\scal{u_0}{v_0} = B(u_0,v_0) >-1$, we have 
\begin{eqnarray*}
\ddt_{|t=0} B(u(t),v(t)) & = & 2 \scal{\dot{u}_0}{\dot{v}_0} + 2 \scal{u_0}{v_0} + \beta_1(\dot{u}_0) + \mathrm{II}_2(\dot{v}_0) \\
&\geq & 2 \scal{u_0}{v_0} + 2 \\
& > & 0~.
\end{eqnarray*}
 This contradicts the maximality of $B$ at $(u_0,v_0)$.
\end{proof}

Finally, let us deduce the proof of Labourie's conjecture.

\begin{proof}[Proof of Corollary \ref{c:UniquenessCriticalPoint}]
Let $\rho$ be a maximal representation from $\Gamma$ into a Hermitian Lie group of rank $2.$ By \cite{burgeriozziwienhard}, the Zariski closure of the image of $\rho(\Gamma)$ is of tube type; thus, we can assume that $\Gamma$ takes values in $\SO_0(2,n+1)$ for some $n$ (see Remark \ref{rmk:LieGroupsRank2}).
Let $X_1$ and $X_2$ be two critical points of $\EE_\rho$. Proposition \ref{p:ExistenceMaximalSurface} constructs two $\rho$-equivariant maximal space-like immersions $u_1: \widetilde{X}_1 \to \H^{2,n}$ and $u_2: \widetilde{X}_2 \to \H^{2,n}$. By Theorem \ref{t:UniquenessMaximalSurfacePrecise}, these two immersions have the same image $S$. Moreover, since $S$ is homeomorphic to a disc (see Lemma \ref{l:GeometrySpacelikeSurface}), both $u_1$ and $u_2$ are diffeomorphisms onto $S$. The map $u_2\circ u_1^{-1}$ induces a biholomorphism from $X_1$ to $X_2$ that is homotopic to the identity. Hence $X_1 = X_2$ in $\Teich(\Sigma)$.

Finally, by Proposition \ref{p-gaussmaps}, the minimal $\rho$-equivariant immersion $f_1: \widetilde{X} \to \mathfrak{X}=\Gr_{(2,0)}\left (\R^{2,n+1}\right)$ is the second Gauss map of the map $u_1$. In other words, $f_1$ maps a point $x$ to the space-like $2$-plane $T^{u_1}_x$. Assume my contradiction that $f_1(x) = f_1(y)= P$ for $x\neq y \in \widetilde{X}_1$.
Then $u_1(x)$ and $u_1(y)$ both belong to $P^\perp$. This contradicts Corollary \ref{c:IntersectionSpacelikeSurface} according to which every negative definite linear subspace of $\R^{2,n+1}$ of dimension $n+1$ intersects $u_1(\widetilde{X}_1)$ exactly once. Therefore, the second Gauss map $f_1$ is injective, which concludes the proof of Corollary \ref{c:UniquenessCriticalPoint}. 
\end{proof}

\begin{rmk}[Comparison with the work of Schoen and Bonsante--Schlenker] \label{r:ComparisonBonsanteSchlenker}

In the case of $\SO_0(2,2)$, Corollary \ref{c:UniquenessCriticalPoint} was proven directly by Schoen \cite{schoen}. This case is quite special because $\SO_0(2,2)$ is a degree $2$ cover of $\PSL(2,\R) \times \PSL(2,\R)$ and $\SO_0(2,2)/\mathrm{S}(\mathrm{O}(2) \times \mathrm{O}(2))$ identifies with $\H^2\times \H^2$.

Krasnov and Schlenker \cite{krasnovschlenker} and Bonsante and Schlenker \cite{BonsanteSchlenker} later clarified the link between maximal surfaces in $\H^{2,1}$ and minimal surfaces in $\H^2 \times \H^2$. In \cite{BonsanteSchlenker}, they gave an intrinsic proof of the uniqueness of a maximal surface in $\H^{2,1}$ in  a more general setting. In their proof, they maximize the time-like distance between a point in $S_1$ and a point in $S_2$ and derive a contradiction from a maximum principle. This approach seems to require an estimate on the curvature of the maximal surface. Our strategy above is inspired by their proof, except that we apply the maximum principle to the scalar product instead of the time-like distance, which does not require any curvature estimate. This relieves us from extra technical difficulties.
\end{rmk}

\subsection{Length spectrum of maximal representations}

In this section, we exploit the pseudo-Riemannian geometry of $\H^{2,n}$ and the existence of a $\rho$-equivariant maximal space-like embedding of $\widetilde{\Sigma}$ to obtain a comparison of the length spectrum of $\rho$ with that of a Fuchsian representation.

In our setting, we define the length spectrum of a representation $\rho$ as follows.

\begin{defi}
Let $\rho$ be a representation of $\Gamma$ into $\SO_0(2,n+1)$. The \emph{length spectrum} of $\rho$ is the function $L_\rho: \Gamma \to \R_+$ that associates to an element $\gamma \in \Gamma$ the logarithm of the spectral radius of $\rho(\gamma)$ (seen as a square matrix of size $n+3$).
\end{defi}

\begin{rmk}
 This definition coincides with Definition \ref{d:LengthSpectrumIntro} since, for $A \in \SO_0(2,n+1)$, $A$ and $A^{-1}$ have the same spectral radius.
\end{rmk}

\begin{theo} \label{t:LengthSpectrumComparison}
If $\rho: \Gamma \to \SO_0(2,n+1)$ is a maximal representation, then either $\rho$ is in the Fuchsian locus (see Definition \ref{d:FuchsianLocus}), or there exists a Fuchsian representation $j: \Gamma \to \SO_0(2,1)$ and $\lambda >1$ such that
\[L_\rho \geq \lambda L_j~.\]
\end{theo}

\begin{rmk}
The representation $\rho$ is in the Fuchsian locus if and only if it stabilizes a totally geodesic space-like copy of $\H^2$ in $\H^{2,n}$. The induced action of $\rho$ on $\H^2$ gives a Fuchsian representation $j$ such that $L_j= L_\rho$.
\end{rmk}

\begin{rmk}
Let $m_{irr}$ denote the irreducible representation of $\SO_0(2,1)$ into $\PSL(n,\R)$. For a Hitchin representation $\rho: \Gamma \to \PSL(n,\R)$, one could hope to find a Fuchsian representation $j:\Gamma \to \SO_0(2,1)$ such that 
\[L_\rho \geq L_{m_{irr} \circ j} = \frac{n-1}{2}L_j~.\]
However, this statement fails to be true for $n\geq 4$ (see \cite[Section 3.3]{LeeZhang}). In particular, it is not true for Hitchin representations into $\SO_0(2,3)$. Nonetheless, Theorem \ref{t:LengthSpectrumComparison} gives a weaker result.
\end{rmk}

In order to prove Theorem \ref{t:LengthSpectrumComparison}, let us fix a maximal representation $\rho: \Gamma \to \SO_0(2,n+1)$ and let $u: \widetilde{\Sigma} \to \H^{2,n}$ be a $\rho$-equivariant maximal space-like embedding. Recall that $g_T$ and $g_N$ respectively denote the metrics on the tangent bundle $T^u$ and the normal bundle $N^u$ induced by $g_{\H^{2,n}}$. By Poincar\'e's uniformization theorem, the metric $g_T$ is conformal to a unique metric $g_P$ of constant curvature $-1$.

\begin{lem} \label{l:ComparisonInducedMetric}
Either $\rho$ is in the Fuchsian locus, or there exists $\lambda >1$ such that $g_T \geq \lambda g_P$.
\end{lem}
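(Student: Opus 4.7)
The plan is to combine the Gauss equation for the maximal immersion with a maximum principle applied to the conformal factor relating $g_u$ and $g_P$. Since $u: \widetilde\Sigma \to \H^{2,n}$ is a space-like immersion into a pseudo-Riemannian manifold of constant sectional curvature $-1$ whose normal bundle is negative definite, the Gauss equation reads, for any $g_u$-orthonormal frame $(e_1,e_2)$ of $T\widetilde\Sigma$,
\[ K_{g_u} = -1 + \langle \mathrm{II}(e_1,e_1),\mathrm{II}(e_2,e_2)\rangle_N - \langle \mathrm{II}(e_1,e_2),\mathrm{II}(e_1,e_2)\rangle_N. \]
Because $\langle\cdot,\cdot\rangle_N$ is negative definite and the maximality of $u$ forces $\mathrm{II}(e_1,e_1)+\mathrm{II}(e_2,e_2)=0$, each of the two terms beyond the $-1$ is non-negative, so $K_{g_u} \geq -1$, with equality precisely at points where $\mathrm{II}$ vanishes.

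Writing $g_u = e^{2\phi} g_P$ and using the conformal change formula $K_{g_u} = e^{-2\phi}(K_{g_P} - \Delta_P\phi) = e^{-2\phi}(-1 - \Delta_P\phi)$ (with the analyst's sign convention for $\Delta_P$, so $\Delta_P\phi \geq 0$ at a minimum), the curvature bound rearranges to the semilinear sub-solution inequality
\[ \Delta_P\phi \leq e^{2\phi} - 1 \qquad \text{on } \Sigma. \]
Evaluating at a minimum $x_0$ of $\phi$ on the compact surface $\Sigma$ immediately forces $\phi(x_0) \geq 0$, whence $\phi \geq 0$ everywhere and $g_u \geq g_P$ pointwise. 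To upgrade this to strict positivity, write $e^{2\phi}-1 = c(x)\phi$, where $c(x) = (e^{2\phi(x)}-1)/\phi(x)$ is a smooth positive function on $\Sigma$ (extended by the value $2$ at zeros of $\phi$). Then $\phi$ satisfies the linear inequality $\Delta_P\phi - c(x)\phi \leq 0$ with non-positive zero-th order coefficient, and the strong maximum principle yields the dichotomy: either $\phi \equiv 0$ on $\Sigma$ or $\phi > 0$ everywhere.

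In the second case, compactness of $\Sigma$ supplies $\lambda := e^{2\min_\Sigma \phi} > 1$ and the required inequality $g_u \geq \lambda g_P$. In the first case, $K_{g_u} \equiv -1$; by the equality case of the Gauss equation the second fundamental form vanishes identically, so $u(\widetilde\Sigma)$ is a totally geodesic space-like surface in $\H^{2,n}$. Such a surface is the intersection of $\H^{2,n}$ with the projectivization of a $3$-dimensional linear subspace $V_0 \subset \R^{2,n+1}$ of signature $(2,1)$, which is necessarily $\rho(\Gamma)$-invariant; by Definition \ref{d:FuchsianLocus}, $\rho$ lies in the Fuchsian locus. The main obstacle in this plan is the step from $\phi \geq 0$ to strict positivity: one must carefully extract from the nonlinear sub-solution inequality a linear operator with non-positive zero-th order coefficient for which the strong maximum principle applies, and then identify the rigid case $\phi \equiv 0$ with the totally geodesic (and hence Fuchsian) configuration.
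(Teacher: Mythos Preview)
Your proof is correct and follows essentially the same route as the paper: both establish $K_{g_u}\geq -1$ via the Gauss equation for a maximal space-like surface (with equality forcing $\mathrm{II}\equiv 0$), and then deduce the metric comparison. The only difference is that the paper invokes the Ahlfors--Schwarz--Pick lemma as a black box (citing \cite{Wolpert82}), whereas you unpack that lemma by hand through the conformal factor $\phi$ and the strong maximum principle; your treatment of the rigidity case (linearizing $e^{2\phi}-1$ and applying the strong maximum principle to $\Delta_P\phi - c(x)\phi\leq 0$) is a clean way to recover exactly what that lemma provides.
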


\begin{proof}
Let $\kappa(g_T)$ denote the Gauss curvature of $g_T$.  Recall that $\kappa(g_T)$ can be computed from the second fundamental form by the formula :
\[\kappa(g_T)_x = -1 - \sum_{i=1}^n \det_{g_T} g_N\left(\mathrm{II}^u_x(\cdot,\cdot),e_i\right)\]
where $(e_i)_{1\leq i \leq n}$ is an orthonormal basis of the normal bundle $N^u$ at $x$. In this formula, $\mathrm{II}^u_x(\cdot,\cdot)$ is seen as a symmetric $2$-form on $T_x\Sigma$ with values in $N_x^u$, so $g_N\left(\mathrm{II}^u_x(\cdot,\cdot),e_i\right)$ is a symmetric $2$-form with values in $\R$. Note that the minus sign in front of the sum comes from the fact that $g_N$ is negative definite.

Since $u$ is a maximal immersion, the $2$-form $g_N\left(\mathrm{II}^u_x(\cdot,\cdot),e_i\right)$ has trace $0$ with respect to $g_T$. Thus $\det_{g_T} g_N\left(\mathrm{II}^u_x(\cdot,\cdot),e_i\right)<0$, with equality if and only if $\mathrm{II}^u_x = 0$. Hence, we have $\kappa(g_T) \geq -1$. The \emph{Ahlfors--Schwarz--Pick lemma} (Theorem \ref{t:CurvatureIneq=>MetricIneq}) then implies that $g_T \geq g_P$. Moreover, if equality holds at one point, then $g_T = g_P$, and, in particular, $\kappa(g_T) = -1$ everywhere. This implies
that $u(\Sigma)$ is totally geodesic.
\end{proof}

Let $g$ be a Riemannian metric on $\Sigma$ and denote by $d_g$ the associated distance on $\widetilde{\Sigma}$. Define the length spectrum of $g$ as the map
\[\function{L_g}{\Gamma}{\mathbb{R}_+}{\gamma}{\lim\limits_{n\to +\infty} \frac{1}{n}\ d_g(x,\gamma^n \cdot x)}~,\]
where $x$ is any point in $\widetilde{\Sigma}$. Note that this definition makes sense without any hypothesis on the curvature of $g$. If $g$ is negatively curved, then $L_g(\gamma)$ is the length of the unique closed geodesic freely homotopic to $\gamma$.

From now on, we assume that $\rho$ does not preserve a copy of $\H^2$. It follows from Lemma \ref{l:ComparisonInducedMetric} that $L_{g_u} \geq \lambda L_{g_P}$ for some $\lambda >1$. Let $j$ be the Fuchsian representation uniformizing $g_P$, i.e. such that there exists a $j$-equivariant isometry from $(\widetilde{\Sigma}, g_P)$ to $\H^2$. We then have
\[\lambda L_{j} = \lambda L_{g_P} \leq L_{g_T}~.\]
In order to prove Theorem \ref{t:LengthSpectrumComparison}, it is thus enough to show the following:

\begin{lem} \label{l:ComparisonInducedLengthSpectrum}
We have
\[L_\rho \geq L_{g_T}~.\]
\end{lem}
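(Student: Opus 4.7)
The plan is to establish the lemma by comparing $d_{g_u}$ on the maximal surface $S = u(\widetilde\Sigma)\subset\H^{2,n}$ to the ambient space-like distance $d_{\H^{2,n}}$ (from Proposition~\ref{p:Distancespace-likePoints}), and then to extract the length spectrum bound by asymptotic analysis of the action of $\rho(\gamma)^n$ on a fixed point $u(x_0)$.

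As a preliminary step, I would first check that any two distinct points of a lift $\hat S\subset\hat\H^{2,n}$ are space-like separated, i.e.\ $|\mathbf{q}(y_1,y_2)|>1$. Choosing a warped-product identification $\hat\H^{2,n}\simeq \D\times\S^n$ centered at a lift of $y_1$ (Proposition~\ref{p:WarpedProduct}), any other $y_2 \in \hat S$ corresponds to a point $(z,f(z))$ with $z\neq 0$. The strict Lipschitz bound on $f:\D\to\S^n$ used in the proof of Lemma~\ref{l:HyperplaneSeparatesS} yields $d_{\S^n}(f(z),v_0) < 2\arctan|z|$, which in warped-product coordinates unpacks exactly to $-\mathbf{q}(y_1,y_2)>1$.

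The heart of the argument is then a pointwise distance comparison
\[d_{g_u}(y_1,y_2)\ \le\ d_{\H^{2,n}}(y_1,y_2)\qquad\text{for all } y_1,y_2\in S,\]
which I would prove by a gradient-flow argument. Fix $y_1\in S$ and consider $\rho_1(y) := d_{\H^{2,n}}(y,y_1)$ on $S\setminus\{y_1\}$. Being the arc-length along space-like geodesics of $\H^{2,n}$, the ambient gradient $\nabla_{\H^{2,n}}\rho_1$ is a unit space-like vector; decomposing it orthogonally into components tangent and normal to $S$, and using that the normal bundle $NS$ is \emph{time-like} (since $S$ is space-like inside $\H^{2,n}$), one gets $|\nabla_S\rho_1|_{g_u}^2 = 1 - \mathbf{q}\bigl((\nabla_{\H^{2,n}}\rho_1)^{NS}\bigr)\ge 1$. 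The unit $g_u$-speed downward flow of $\rho_1$ on $S$ then decreases $\rho_1$ at rate $\ge 1$, so it must reach the unique zero of $\rho_1$ (namely $y_1$) in $g_u$-arc length at most $d_{\H^{2,n}}(y_1,y_2)$, exhibiting the required path on $S$.

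To extract the length spectrum comparison, I would apply the pointwise bound to $y_2=\rho(\gamma)^n u(x_0)$. Using Corollary~\ref{c:AnosovProximal} to decompose $u(x_0)=\alpha e_+ + \beta e_- + v$ in the spectral basis of $\rho(\gamma)$ (where $e_\pm$ are the isotropic eigenvectors with eigenvalues $e^{\pm L_\rho(\gamma)}$ and $\mathbf{q}(e_+,e_-)=1$, with $\alpha,\beta\neq 0$ because $[e_\pm]\in\partial_\infty S$ is not $\mathbf{q}$-orthogonal to $u(x_0)$ by Lemma~\ref{l:HyperplaneSeparatesS}), a direct calculation gives
\[\mathbf{q}\bigl(u(x_0),\rho(\gamma)^n u(x_0)\bigr) = \alpha\beta\bigl(e^{nL_\rho(\gamma)}+e^{-nL_\rho(\gamma)}\bigr) + o(e^{nL_\rho(\gamma)}),\]
whence $d_{\H^{2,n}}(u(x_0),\rho(\gamma)^n u(x_0)) = nL_\rho(\gamma) + O(1)$. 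Combined with the pointwise comparison, this gives $d_{g_u}(x_0,\gamma^n x_0) \le nL_\rho(\gamma) + O(1)$; dividing by $n$ and letting $n\to\infty$ yields $L_{g_u}(\gamma) \le L_\rho(\gamma)$. The key insight behind the whole plan is that the lower bound $|\nabla_S\rho_1|_{g_u}\ge 1$ comes ``for free'' from pseudo-Riemannian geometry (the time-like character of $NS$ flips the sign in the Pythagorean identity), and the main technical point is simply to verify that the downward gradient flow genuinely converges to $y_1$, which follows from $\rho_1^{-1}(0)=\{y_1\}$ (established by the space-like separation step) and completeness of $(S,g_u)$.
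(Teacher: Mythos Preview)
Your proposal is correct and follows the same overall architecture as the paper (pointwise comparison $d_{g_u}\le d_{\H^{2,n}}$ on $S$, combined with an asymptotic computation of $d_{\H^{2,n}}(x,\rho(\gamma)^n x)$ via the spectral data from Corollary~\ref{c:AnosovProximal}), but the mechanism you use for the distance comparison is genuinely different from the paper's Proposition~\ref{p:ComparisonAmbientMetric}.

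The paper fixes a warped-product identification $\hat{\H}^{2,n}\simeq \H^2\times_w\S^n$ placing $x$ and $y$ in the same horizontal slice, so that $d_{\H^{2,n}}(x,y)=d_{\H^2}(\pi(x),\pi(y))$, and then uses the metric inequality $\pi^*g_{\H^2}\ge g_{\H^{2,n}}|_S$ to bound the $g_u$-length of the lift to $S$ of the $\H^2$-geodesic. Your argument instead runs the unit-speed downward gradient flow of $\rho_1=d_{\H^{2,n}}(\cdot,y_1)$ on $S$, exploiting that the time-like character of $NS$ forces $|\nabla_S\rho_1|_{g_u}\ge 1$. The preliminary space-like separation step and the asymptotic step match the paper (the former is exactly the content of the Remark after Proposition~\ref{p:ComparisonAmbientMetric}, the latter is Proposition~\ref{p:PseudoRiemannianLength}). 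The paper's projection argument is slightly cleaner in that it sidesteps the convergence analysis of the flow near the singular point $y_1$; your approach is more intrinsic, makes the pseudo-Riemannian ``reversed Pythagoras'' mechanism explicit, and would transfer unchanged to space-like submanifolds in more general ambient spaces where no warped-product picture is available.
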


In order to prove this lemma, we need another characterization of $L_\rho$. Recall that, if $x$ and $y$ are joined by a space-like geodesic, then $d_{\H^{2,n}}(x,y)$ denotes the length of the space-like geodesic segment between $x$ and $y$ (see Section \ref{ss:H2n}). We set $d_{\H^{2,n}}(x,y)=0$ otherwise.

\begin{prop} \label{p:PseudoRiemannianLength}
For any $\gamma \in \Gamma$ and any $x \in u(\widetilde{\Sigma})$, we have
\[L_\rho(\gamma) = \lim_{n\to + \infty} \frac{1}{n} d_{\H^{2,n}}(x, \rho(\gamma)^n \cdot x)~.\]
\end{prop}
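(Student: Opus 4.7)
The strategy is to express the pseudo-Riemannian distance via Proposition \ref{p:Distancespace-likePoints} and then compute the inner product $\mathbf{q}(\hat{x}, \rho(\gamma)^n \cdot \hat{x})$ using the spectral decomposition of $\rho(\gamma)$ provided by the Anosov property.

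\textbf{Step 1 (spectral setup).} Fix $\gamma \in \Gamma$ and invoke Corollary \ref{c:AnosovProximal}: there exist isotropic vectors $e_+,e_- \in \R^{2,n+1}$ and a real number $\lambda > 1$ such that $\rho(\gamma)e_\pm = \lambda^{\pm 1} e_\pm$, the plane $\Pi = \mathrm{Span}(e_+,e_-)$ is non-degenerate (so $c := \mathbf{q}(e_+,e_-) \neq 0$), and $\rho(\gamma)$ preserves $V := \Pi^{\perp}$ with spectral radius $\mu < \lambda$ on $V$. By the definition of the length spectrum in our setting, $L_\rho(\gamma) = \log\lambda$. Moreover, by Theorem \ref{t:AnosovCurve} and Corollary \ref{c:AnosovProximal}, $[e_+] = \xi(\gamma_+)$ and $[e_-] = \xi(\gamma_-)$ lie in $\partial_\infty S$, where $S = u(\widetilde{\Sigma})$.

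\textbf{Step 2 (non-vanishing of the extremal coordinates).} Choose a lift $\hat{x} \in \hat{\H}^{2,n}$ of $x$ and decompose it uniquely as
\[\hat{x} = \alpha e_+ + \beta e_- + v, \qquad \alpha,\beta \in \R,\ v \in V.\]
The key point is that $\alpha \beta \neq 0$. Indeed, $\mathbf{q}(\hat{x},e_+) = \beta c$ and $\mathbf{q}(\hat{x},e_-) = \alpha c$; since $x \in S$ and $\xi(\gamma_\pm) \in \partial_\infty S$, Lemma \ref{l:HyperplaneSeparatesS} forces $\mathbf{q}(\hat{x},e_\pm) \neq 0$, hence $\alpha,\beta \neq 0$.

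\textbf{Step 3 (asymptotics of the scalar product).} Since cross terms between $\Pi$ and $V$ are orthogonal and $e_\pm$ are isotropic, a direct computation gives
\[\mathbf{q}(\hat{x}, \rho(\gamma)^n \cdot \hat{x}) \;=\; c\,\alpha\beta\,(\lambda^n + \lambda^{-n}) + \mathbf{q}(v, \rho(\gamma)^n v).\]
The last term is bounded by $C\mu^n$ for some constant $C$, with $\mu < \lambda$, so
\[\mathbf{q}(\hat{x}, \rho(\gamma)^n \cdot \hat{x}) = c\,\alpha\beta\,\lambda^n \bigl(1 + o(1)\bigr) \text{ as } n \to +\infty.\]

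\textbf{Step 4 (conclusion).} Since $\alpha\beta c \neq 0$, for $n$ sufficiently large we have $|\mathbf{q}(\hat{x},\rho(\gamma)^n\hat{x})| > 1$, so Proposition \ref{p:Distancespace-likePoints} applies and yields
\[d_{\H^{2,n}}(x, \rho(\gamma)^n \cdot x) = \cosh^{-1}\!\bigl|c\,\alpha\beta\,\lambda^n (1+o(1))\bigr| = n\log\lambda + O(1).\]
Dividing by $n$ and letting $n \to +\infty$ gives the claimed limit $L_\rho(\gamma) = \log\lambda$.

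The main subtlety is Step 2: one needs $\alpha$ and $\beta$ to be nonzero to extract $\lambda^n$-growth, and this is where it is essential that $x$ lie on the maximal surface $S$, through the ``hyperplane separates $S$'' property from Lemma \ref{l:HyperplaneSeparatesS}. The remaining steps are a routine linear-algebraic estimate controlling the contribution of the invariant subspace $V$.
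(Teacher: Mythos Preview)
Your proof is correct and follows essentially the same strategy as the paper's: decompose a lift $\hat{x}$ along the eigenspaces of $\rho(\gamma)$ given by Corollary~\ref{c:AnosovProximal}, compute the scalar product $\mathbf{q}(\hat{x},\rho(\gamma)^n\hat{x})$, and apply Proposition~\ref{p:Distancespace-likePoints}. The only notable difference is in Step~2: the paper deduces $\alpha\neq 0$ from the convergence $\rho(\gamma)^n\cdot x \to \xi(\gamma_+)$ via Proposition~\ref{p:BoundaryMaximalSurface}, whereas you invoke Lemma~\ref{l:HyperplaneSeparatesS} directly to get both $\alpha\neq 0$ and $\beta\neq 0$ from $\mathbf{q}(\hat{x},e_\pm)\neq 0$ --- arguably a cleaner route. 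One tiny imprecision: in Step~3 the bound $C\mu^n$ should really be $C\mu'^{\,n}$ for any $\mu'\in(\mu,\lambda)$ (or $C n^k\mu^n$) to account for possible Jordan blocks on $V$, but this does not affect the $o(\lambda^n)$ conclusion.
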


\begin{proof}
By Corollary \ref{c:AnosovProximal}, one can find two isotropic vectors $e_+$ and $e_- \in \mathbb{R}^{2,n+1}$ with $\scal{e_+}{e_-} = 1$ such that $\rho(\gamma)\cdot e_+ = e^{L_\rho(\gamma)} e_+$ and $\rho(\gamma)\cdot e_- = e^{- L_\rho(\gamma)} e_-$. Moreover, if $V$ denotes the orthogonal of the vector space spanned by $e_-$ and $e_+$, then the spectral radius of the restriction of $\rho(\gamma)$ to $V$ is strictly less than $e^{L_\rho(\gamma)}$.

Let $x$ be a point in $u(\widetilde\Sigma)$ and $\hat x$ be a lift of $x$ to $\widehat \H^{2,n}$. We can write
\[\hat x = \alpha_-e_- + \alpha_+ e_+ + v~,\]
with $v\in V$. By Proposition \ref{p:BoundaryMaximalSurface}, we have
\[\rho(\gamma)^n \cdot x \tend{n\to +\infty} [e_+]\]
and
\[\rho(\gamma)^n \cdot x \tend{n\to -\infty} [e_-]\]
Hence $\alpha_+$ and $\alpha_-$ are non-zero.

We have
\begin{eqnarray*}
\frac{1}{n} d_{\H^{2,n}}(x, \rho(\gamma)^n\cdot x) &=&  \frac{1}{n} \cosh^{-1} \left\vert\scal{\hat x}{\rho(\gamma)^n \cdot \hat x} \right\vert\\
&=&\frac{1}{n} \cosh^{-1} \vert \langle \alpha_- e_- + \alpha_+ e_+ + v~ ,\\
&&  \alpha_- e^{-nL_\rho(\gamma)} e_- + \alpha_+ e^{n L_\rho(\gamma)} e_+ + \rho(\gamma)^n \cdot v  \rangle  \vert \\
&=&\frac{1}{n} \cosh^{-1} \left \vert 2 \alpha_- \alpha_+ \cosh(nL_\rho(\gamma)) + \scal{v}{\rho(\gamma)^n \cdot v}\right\vert~.
\end{eqnarray*}
Since the spectral radius of $\rho(\gamma)$ restricted to $V$ is strictly less than $L_\rho(\gamma)$, the term $\scal{v}{\rho(\gamma)^n \cdot v}$ is negligible and we obtain
\[\frac{1}{n} d_{\H^{2,n}}(x, \rho(\gamma)^n\cdot x) \tend{n\to +\infty} L_\rho(\gamma).\]

\end{proof}

In order to conclude the proof of Lemma \ref{l:ComparisonInducedLengthSpectrum}, it suffices to prove the following:

\begin{prop} \label{p:ComparisonAmbientMetric}
If $x$ and $y \in u(\widetilde{\Sigma})$ are joined by a space-like geodesic segment, then we have
$d_u(x,y) \leq d_{\H^{2,n}}(x,y)$, where $d_u$ is the induced distance on $u(\widetilde \Sigma).$
\end{prop}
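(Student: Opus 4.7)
The plan is to introduce warped-product coordinates on $\hat{\H}^{2,n}$ adapted to the pair $(\hat x,\hat y)$, so that both the ambient space-like distance $d_{\H^{2,n}}(x,y)$ and the intrinsic distance $d_u(x,y)$ become dominated by a single hyperbolic distance in an auxiliary $2$-disc.

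First I would lift $x$ and $y$ to vectors $\hat x,\hat y\in\hat{\H}^{2,n}$ lying on the same connected component of the preimage of $S$. By Lemma \ref{l:HyperplaneSeparatesS} applied to that component one has $\mathbf{q}(\hat x,\hat y)<0$, and the space-like hypothesis forces $-\mathbf{q}(\hat x,\hat y)>1$. The plane $P:=\mathrm{span}(\hat x,\hat y)$ has signature $(1,1)$, and since $P^\perp$ has signature $(1,n)$ we may enlarge $P$ by a space-like unit vector of $P^\perp$ to obtain a $3$-plane $P'\subset\R^{2,n+1}$ of signature $(2,1)$. Let $P'_+,P'_-$ be the positive- and negative-definite parts of $P'$, and fix a unit vector $v_0\in P'_-$. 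Use the orthogonal decomposition $\R^{2,n+1}=P'_+\oplus\bigl(P'_-\oplus(P')^\perp\bigr)=\R^{2,0}\oplus\R^{0,n+1}$ in Proposition \ref{p:WarpedProduct} to identify $\hat{\H}^{2,n}\cong\D\times\S^n$.

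In these coordinates, since $\hat x,\hat y$ both lie in $P'$, they take the form $\hat x=(u_x,\epsilon_x v_0)$ and $\hat y=(u_y,\epsilon_y v_0)$ for some $u_x,u_y\in\D$ and $\epsilon_x,\epsilon_y\in\{\pm1\}$. A direct expansion gives
\[-\mathbf{q}(\hat x,\hat y)=\frac{\epsilon_x\epsilon_y(1+|u_x|^2)(1+|u_y|^2)-4\,u_x\cdot u_y}{(1-|u_x|^2)(1-|u_y|^2)}.\]
If $\epsilon_x\epsilon_y=-1$, the elementary estimates $-2u_x\cdot u_y\leq 2|u_x||u_y|\leq 1+|u_x|^2|u_y|^2$ (Cauchy--Schwarz followed by AM--GM, using $|u_x|,|u_y|<1$) force the numerator to be at most the denominator, hence $-\mathbf{q}(\hat x,\hat y)\leq 1$, contradicting the hypothesis. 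Thus $\epsilon_x=\epsilon_y$, and the right-hand side is recognized as the Poincar\'e-disc formula $\cosh d_{\H^2}(u_x,u_y)$, yielding
\[d_{\H^{2,n}}(x,y)=\cosh^{-1}\bigl(-\mathbf{q}(\hat x,\hat y)\bigr)=d_{\H^2}(u_x,u_y).\]

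To conclude, I would apply Proposition \ref{p:SLipschitzGraph} to this same splitting: the component of the preimage of $S$ containing $\hat x$ is the graph of a Lipschitz map $f:\D\to\S^n$, and the warped-product formula \eqref{eq:WarpedProductMetric} implies that on this graph the induced metric $g_u$ is pointwise dominated by $\pi^*g_{\H^2}$, where $\pi$ is the projection to $\D$. Lifting the hyperbolic geodesic from $u_x$ to $u_y$ through the graph of $f$ therefore produces a path on $S$ from $x$ to $y$ of $g_u$-length at most $d_{\H^2}(u_x,u_y)$, so $d_u(x,y)\leq d_{\H^2}(u_x,u_y)=d_{\H^{2,n}}(x,y)$. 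The delicate step is the sign dichotomy above: it is precisely the space-like separation of $x$ and $y$ that rules out the ``antipodal'' configuration $\epsilon_x=-\epsilon_y$, in which the two points would lie on opposite sheets of the hyperbolic slice $P'\cap\hat{\H}^{2,n}$ and no such comparison could be made.
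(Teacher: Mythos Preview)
Your argument is correct and follows essentially the same route as the paper: choose a warped-product splitting so that $\hat x$ and $\hat y$ lie on a common horizontal slice $\H^2\times\{v_0\}$, identify $d_{\H^{2,n}}(x,y)$ with $d_{\H^2}(u_x,u_y)$, and then use $\pi^*g_{\H^2}\geq g_u$ on the graph to bound $d_u(x,y)$ by the $g_{\H^2}$-length of the lifted geodesic. The only substantive difference is that you explicitly justify, via the ``sign dichotomy'' computation, that $\hat x$ and $\hat y$ can indeed be placed on the \emph{same} slice (equivalently, that $\mathbf{q}(\hat x,\hat y)<-1$ once they are chosen on the same component of $\hat S$); the paper simply asserts such a warped-product structure exists. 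Your verification is a welcome clarification of a step the paper leaves implicit, but the overall strategy is the same.
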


\begin{proof}
Recall that, according to Proposition \ref{p:WarpedProduct}, the space $\hat{\H}^{2,n}$ is isometric to a warped product
\[\H^2 \times \S^n\]
with the metric
\[g = g_{\H^2} \oplus - w g_{\S^n}~,\]
for some positive function $w$ on $\H^2$. In this warped product structure, the horizontal slices $\H^2 \times \{x_2\}$ are totally geodesic.

Let $x$ and $y$ be two points in $u(\widetilde{\Sigma})$ and let $\hat{x}$ and $\hat{y}$ be lifts of $x$ and $y$ to $\hat{\H}^{2,n}$ belonging to the same lift $\hat{S}$ of $u(\widetilde{\Sigma})$. Let us choose a warped product structure on $\hat{\H}^{2,n}$ such that $x$ and $y$ belong to the same horizontal slice. 

Let $\pi$ denote the restriction to $\hat{S}$ of the projection on the $\H^2$ factor with respect to this warped product structure. We then have
\[d_{\H^{2,n}}(x,y) = d_{\H^2}(\pi(x), \pi(y))~.\]
By Proposition \ref{p:SLipschitzGraph}, $\pi$ is a diffeomorphism. Moreover, given the warped product structure of the metric $g_{\H^{2,n}}$, on $u(\widetilde{\Sigma})$ we have
\[ \label{eq:ProjectionMetric}
\pi^*g_{\H^2} \geq g_{\H^{2,n}}~.
\]

Let $c: [0,1] \to \H^2$ denote the geodesic segment between $\pi(x)$ and $\pi(y)$. We have
\begin{eqnarray*}
d_u(x,y) &\leq & \int_0^1 \sqrt{g_{\H^{2,n}}\left(\dt \pi^{-1}\circ c(t) \right)} dt\\
& \leq & \int_0^1 \sqrt{g_{\mathbb{H}^2}\left(\dt c(t) \right) }d t = d_{\mathbb{H}^2}(\pi(x), \pi(y)) = d_{\H^{2,n}}(x,y)~.
\end{eqnarray*}
\end{proof}

We can now conclude that for any $\gamma \in \Gamma$,
\begin{eqnarray*}
L_{g_u}(\gamma) &= & \lim_{n\to +\infty} \frac{1}{n}d_u(x,\gamma^n\cdot x) \\
&\leq & \lim_{n\to +\infty} \frac{1}{n}d_{\H^{2,n}}(x,\gamma^n\cdot x) = L_\rho(\gamma)~,
\end{eqnarray*}
which proves Lemma \ref{l:ComparisonInducedLengthSpectrum} and thus Theorem \ref{t:LengthSpectrumComparison}.

\section{Geometric structures associated to maximal representations} \label{s:Geometrization}
In this section, we realize maximal representations into $\SO_0(2,n+1)$ as holonomies of geometric structures. More precisely, we prove the following two theorems:

\begin{theo}\label{t: fibered photons and maximal reps}
The holonomy gives a surjective map from the space of fibered photon structures on Stiefel bundles over $\Sigma$ onto the set of maximal representations into $\SO_0(2,n+1)$.
\end{theo}
\begin{theo} \label{t:EinsteinHitchin}
For any Hitchin representation $\rho\in \Hit(\Gamma,\SO_0(2,3))$, there exists a maximal fibered conformally flat Lorentz structure on the unit tangent bundle $\pi: T^1\Sigma \longrightarrow \Sigma$ whose holonomy is $ \rho\circ\pi_*$.
\end{theo}
The notions of fibered photon structure, Stiefel bundles, and maximal fibered conformally flat Lorentz structures are described in the next subsections.

\subsection{$(G,X)$-structures}\label{(G,X)-structures}

Here we recall the basic theory of $(G,X)$-structures. For more details, the reader is referred to \cite{goldmangeometricstructures}.

In this subsection, let $G$ be a semi-simple Lie group, $X=G/H$ be a $G$-homogeneous space and $M$ be a manifold such that $\dim(M)=\dim(X)$.

\begin{defi}\label{(G,X)-structures}
A $(G,X)$-structure on $M$ is a maximal atlas of charts taking values in $X$ whose transition functions are locally the restriction of elements in $G$.

Two $(G,X)$-structures on $M$ are \textit{equivalent} if there exists a diffeomorphism $f: M \longrightarrow M$ which is isotopic to the identity and whose expression in local charts is given by elements in $G$.
\end{defi}

Given a $(G,X)$-structure on $M$, one can associate a \textit{developing pair} $(\dev,\rho)$, where
$$\rho: \pi_1 (M) \longrightarrow G$$
is a representation called the \textit{holonomy} of the structure and
$$\dev: \widetilde{M} \longrightarrow X$$
is a $\rho$-equivariant local diffeomorphism called the \textit{developing map}. 

The developing pair is not uniquely defined. Given two developing pairs $(\dev_1,\rho_1)$ and $(\dev_2,\rho_2)$, if there exists an element $g\in G$ so that
$$\left\{\begin{array}{l}
\dev_1=g\circ\dev_2 \\
\rho_2(\gamma)=g \cdot \rho_1(\gamma) \cdot g^{-1},~\forall \gamma \in \pi_1(M)
\end{array}\right.,$$
then $(\dev_1,\rho_1)$ and $(\dev_2,\rho_2)$ correspond to equivalent $(G,X)$-structures.
It is well-known (see for example \cite{goldmangeometricstructures}) that a developing pair fully determines the $(G,X)$-structure on $M$.

In particular, if $\mathcal{D}_{(G,X)}(M)$ is the space of equivalence classes of $(G,X)$-structures on $M$, then we get a well-defined map
$$\textbf{hol}: \mathcal{D}_{(G,X)}(M) \longrightarrow \Rep(\pi_1(M),G),$$
where $\Rep(\pi_1(M),G):=\Hom\big(\pi_1(M),G\big)/G$ is the representation variety.

The well-known \emph{Ehresmann--Thurston principle} sates that this map induces a local homeomorphism from the set of equivalence classes of $(G,X)$-structures on $M$ to the representation variety.

\begin{theo}\cite[Chapter 3]{Thurston3manifolds}
Let $\rho_0$ be the holonomy of a $(G,X)$-structure on a closed manifold $M$. Then any representation $\rho:\pi_1(M) \to G$ sufficiently close to $\rho$ is the holonomy of a $(G,X)$-structure on $M$ close to the initial one, which is unique up to equivalence.
\end{theo}

An \emph{$X$-bundle over $M$} is a fiber bundle $p: \mathcal{X}\to M$ obtained by gluing together sets of the form $U_i\times X\cong p^{-1}(U_i)$, where $\{U_i\}_{i\in I}$ is a covering of $M$ and, for $U_i\cap U_j\neq \emptyset$, the transition functions have the form
$$\begin{array}{llll}
\Psi: & (U_i\cap U_j)\times X & \longrightarrow & (U_i\cap U_j)\times X~, \\
 & (m,x) & \longmapsto & \big(m,g(m)x\big)
 \end{array}$$
where $g$ is a smooth map from $U_i\cap U_j$ to $G$.

Given a principal $G$-bundle $P \to M$, the quotient $P/H$ is an $X$-bundle. Conversely, given an $X$-bundle over $M$, there exists an open covering $\mathcal{U}= \big\{ U_i\big\}_{i\in I}$ of $M$ such that the transition functions define a family of maps $g_{ij}: U_i\cap U_j \to G$ for any pair $(i,j)$ with $U_i\cap U_j \neq \emptyset$. These maps satisfy the cocycle condition $g_{ij}g_{jk}g_{ki}=1$ on triple intersections $U_i\cap U_j \cap U_k\neq \emptyset$. By gluing together sets of the form $U_i\times G$ with the same cocycle, we obtain a principal $G$-bundle that we call \emph{the underlying principal $G$-bundle}.

\begin{defi}
Two principal $G$-bundles $P_1$ and $P_2$ over $M$ are isomorphic if there exists a $G$-equivariant diffeomorphism from $P_1$ to $P_2$ covering the identity on $M$.
Two $X$-bundles are \emph{isomorphic} if the underlying principal bundles are equivalent.
\end{defi}

Given $\rho\in\Rep(\pi_1(M),G)$, one can associate an $X$-bundle $\mathcal{X}_\rho$ defined by
$$\mathcal{X}_\rho:= P_\rho/H~,$$
where $P_\rho$ is the flat principal $G$-bundle with holonomy $\rho$. Equivalently, $\mathcal{X}_\rho = \big(\widetilde M \times X \big)/\pi_1(M)$, where the action of $\gamma\in\pi_1(M)$ on $(m,x)\in \widetilde M \times X$ is given by $\gamma.(m,x)=(\gamma. m,\rho(\gamma)x)$.

The bundle $\mathcal{X}_\rho$ is equipped with a flat structure, that is an integrable distribution whose dimension is the same as the dimension of $M$ and which is transverse to the fibers of $p: \mathcal{X}_\rho \longrightarrow M$. It follows that for each $x\in \mathcal{X}_\rho$, we have a splitting
$$T_x \mathcal{X}_\rho = T^v_x\mathcal{X}_\rho \oplus T^h_x\mathcal{X}_\rho.$$
Here $T^v_x\mathcal{X}_\rho= \ker(dp_x)$ is the vertical tangent space and $T^h_x\mathcal{X}_\rho$ is the horizontal tangent space given by the distribution. Note also that the projection $p: \mathcal{X}_\rho \longrightarrow M$ identifies $T_x^h\mathcal{X}_\rho$ with $T_{p(x)}M$.

In this language, a developing map with holonomy $\rho$ corresponds to a section $s$ of $\mathcal{X}_\rho$ which is transverse to the horizontal distribution.

\subsection{Fibered photon structures}\label{isotropicplanes}

\begin{defi}
A \textit{photon} in $\R^{2,n+1}$ is an isotropic 2-plane. We denote the set of photons in $\R^{2,n+1}$ by $\Pho(\R^{2,n+1})$.
\end{defi}

\begin{rmk}
Equivalently, a photon is a projective line inside the set of isotropic lines $\Ein^{1,n}\subset \ProjR{n+1}$. Indeed, such a projective line is necessarily the projectivization of an isotropic plane in $\R^{2,n+1}$.
\end{rmk}

The group $\mathrm{O}(2,n+1)$ acts transitively on $\Pho\big(\R^{2,n+1}\big)$ and the stabilizer of a photon is a parabolic subgroup denoted $\mathrm{P}_{2}$. We thus have an identification of
$\Pho\big(\R^{2,n+1}\big)$ with the homogeneous space $\mathrm{O}(2,n+1)/\mathrm{P}_{2}$.

Given $k,n\in\N$, the \emph{Stiefel manifold} $\mathcal{S}_k(\R^n)$ is the space of orthonormal $k$-frames of $\R^n$, that is, the set of $k$-tuples $(v_1,\dots,v_k)$ of orthonormal vectors in $\R^n$.

\begin{lem}
For $n>0$, the space $\Pho(\R^{2,n+1})$ is diffeomorphic to the Stiefel manifold  $\mathcal{S}_2(\R^{n+1})$. In particular, $\Pho\left(\R^{2,2}\right)\cong \S^1\sqcup \S^1,~\Pho\left(\R^{2,3}\right)\cong \R\P^3$ and, for $n>2$, $\Pho(\R^{2,n+1})$ is simply connected.
\end{lem}

\begin{proof}
Consider an orthogonal splitting $\R^{2,n+1}=E\oplus F$, where $E$ is a positive definite $2$-plane and $F=E^{\bot}$. Denote by $g_E$ (respectively $g_F$) the scalar product induced on $E$ (respectively $F$). For each photon $V\in \Pho(\R^{2,n+1})$, the restriction of the orthogonal projection $p_E: \R^{2,n+1} \to E$ defines an isomorphism between $V$ and $E$. In particular, each photon is the graph of a linear map $\phi: E \to F$. Hence, we an injective map
$$\Psi: \Pho(E\oplus F) \longrightarrow \Hom(E,F)~.$$
The image of $\Psi$ consists of those linear maps $\varphi: E \to F$ such that $g_E(x,y)=-g_F(\varphi(x),\varphi(y))$ for any $x,y\in E$.

Fixing an orthonormal basis $(e_1,e_2)$ of $E$, such a map $\varphi: E \to F$ is completely determined by the pair of orthonormal vectors $(\varphi(e_1),\varphi(e_2))$.
\end{proof}

Let $P$ be a principal $\mathrm{O}(2,n)$-bundle over $\Sigma$. If $M=P/\mathrm{P_2}$ is the quotient $P$ by the action of the parabolic subgroup $\mathrm{P}_2<\mathrm{O}(2,n)$, then $M$ is a fiber bundle over $\Sigma$ with fibers isomorphic to $\Pho\big(\R^{2,n}\big)$. We call such a fiber bundle $M$ a $\Pho\big(\R^{2,n}\big)$-bundle.

Let $M$ be a $\Pho\big(\R^{2,n}\big)$-bundle over $\Sigma$ and
let $\widetilde{M}$ denote the pull-back of $M$ to the universal cover $\widetilde\Sigma$.
Given a representation $\rho$ of $\pi_1(\Sigma)$ into $\mathrm{O}(2,n+1)$, we identify the representations $\rho$ and $\rho\circ \pi_*: \pi_1(M) \to \mathrm{O}(2,n+1)$, where $\pi:M \to \Sigma$ denotes the fibration.

\begin{defi} \label{d:FiberedPhotonStructure}
A map $f: \widetilde{M}\to \Pho\big(\R^{2,n+1}\big)$ is called \emph{fibered} if it maps each fiber of $\widetilde{M}$ isomorphically onto $\Pho(\ell^\perp)$ for some $\ell \in \H^{2,n}$.

An $\big(\mathrm{O}(2,n+1),\Pho(\R^{2,n+1})\big)$-structure on $M$ with holonomy $\rho$ is a \emph{fibered photon structure} if its developing map $\dev: \widetilde{M} \to \Pho\big(\R^{2,n+1}\big)$ is fibered.
\end{defi}

\begin{rmk}
The covering $\widetilde{M}$ of $M$ is not always simply connected. However, the developing map of a photon structure on $M$ with holonomy $\rho$ must factor through $\widetilde{M}$, so the definition makes sense.
\end{rmk}

Fibered photon structures are related to space-like surfaces in the following way. Let $\rho: \pi_1(\Sigma) \to \textrm O(2,n+1)$ be a representation. Given a $\Pho\big(\R^{2,n}\big)$-bundle $M$ over $\Sigma$ and a $\rho$-equivariant fibered map $f:\widetilde{M}\to \Pho\big (\R^{2,n+1}\big)$, one can construct the $\rho$-equivariant map $f_\Sigma: \widetilde{\Sigma}\to \H^{2,n}$ such that
\[f(\widetilde{M}_x) = \Pho\big(f_\Sigma(x)^\perp\big)\]
for all $x\in \widetilde{\Sigma}$. 
Conversely, a $\rho$-equivariant map $u: \widetilde{\Sigma} \to \hat{\H}^{2,n}$ defines a reduction of structure group of the flat $\mathrm O(2,n+1)$-bundle with monodromy $\rho$ to a principal $\mathrm O(2,n)$-bundle, and the associated $\Pho(\R^{2,n})$-bundle $M$ over $\Sigma$ comes with a natural $\rho$-equivariant fibered map $f:\widetilde{M} \to \Pho(\R^{2,n+}1)$ such that $f_\Sigma = u$. There is thus a bijection between $\rho$-equivariant maps from $\widetilde{\Sigma}$ to $\H^{2,n}$ and fibered maps on $\Pho(2,n)$-bundles over $\Sigma$. We have the following:

\begin{lem} \label{l:FiberedPhoton=Spacelike}
Let $M$ be a $\Pho(\R^{2,n})$-bundle over $\Sigma$ and $f:\widetilde{M}\to \Pho(\R^{2,n+1})$ be a $\rho$-equivariant fibered map. Then $f$ is a local diffeomorphism if and only if $f_\Sigma:\widetilde{\Sigma}\to \H^{2,n}$ is a space-like immersion. 
\end{lem}

\begin{proof}
Let $\pi:\widetilde{M}\to \widetilde{\Sigma}$ denote the fibration. 
By definition, $f$ induces a bijection between $\pi^{^-1}(x)$ and $\Pho(f_\Sigma(x)^\perp)$ for all $x\in\widetilde \Sigma$. Hence, for $x_1\neq x_2 \in \widetilde{\Sigma}$, $f(\pi^{-1}(x_1))$ and $f(\pi^{-1}(x_2))$ are disjoint if and only if $f_\Sigma(x_1)^\perp\cap f_\Sigma(x_2)^\perp$ does not contain any photons. A computation of the signature shows that this happens exactly when $f_\Sigma(x_1)$ and $f_\Sigma(x_2)$ are joined by a space-like geodesic. In conclusion, $f$ is injective if and only if $f_\Sigma$ maps distinct points to points that are joined by a space-like geodesic. The rest of the proof is the infinitesimal analogue of this argument.

Fix an orthogonal splitting $\R^{2,n+1} = E \oplus F$, with $E$ a space-like $2$-plane. For every $y\in \widetilde{M}$, let $\phi(y) : E \to F$ be the linear map whose graph is the photon $f(y)$. Let $x$ be a point in $\widetilde{\Sigma}$, $y$ be a point in $\pi^{-1}(x)$, $v$ be a tangent vector to $\widetilde{M}$ at $y$ and $u = d \pi(v)$. By definition of $f_\Sigma$, we have
\[\scal{e + \phi(y) e}{f_\Sigma(x)} = 0\]
for all $e\in E$. 
Taking a derivative in the direction $v$ gives
\begin{equation} \label{eq:DerivativeFiberedMap}
\scal{d \phi(v) e}{f_\Sigma(x)} + \scal{e + \phi(y) e}{d f_\Sigma(u)} = 0
\end{equation}
for all $e\in E$.

Assume first that $f_\Sigma$ is a space-like immersion. If $v$ is such that $d f(v) = 0$, then we have $d \phi(v) = 0$ and Equation \eqref{eq:DerivativeFiberedMap} implies that $f(y)$ is contained in $d f_\Sigma(u)^\perp$. If $u$ were non-zero, then $d f_\Sigma(u)$ would be non-zero and space-like. The subspace $\Span(f_\Sigma(x),d f_\Sigma(u))^\perp$ would thus be of signature $(1,n)$, contradicting the fact that $d f_\Sigma(u)^\perp$ contains the photon $f(y)$. Therefore, $u$ must vanish, meaning that $v$ is tangent to the fiber of $\pi$. But the restriction of a fibered map to each fiber is an immersion. Thus $d f(v) = 0$ implies $v=0$, proving that $f$ is an immersion.

Conversely, assume that $f_\Sigma$ is not a space-like immersion. Choose $x$ in $\widetilde{\Sigma}$ and $u\in T_x(\widetilde\Sigma)\backslash \{0\}$ such that $\norm{d f_\Sigma(u)}\leq 0$. Then $\Span(f_\Sigma(x), d f_\Sigma (u))^\perp$ contains a photon $\phi$. Since $f$ induces a bijection between $\pi^{-1}(x)$ and $\Pho(f_\Sigma(x)^\perp)$, there is a point $y\in \pi^{-1}(x)$ such that $f(y) = \phi$. Let us choose $v\in T_y\widetilde{M}$ such that $d \pi(v) = u$. By construction, we have $f(y) \in d f_\Sigma(u)^\perp$. Equation \eqref{eq:DerivativeFiberedMap} thus implies that $\scal{d \phi(v) e}{f_\Sigma(x)} = 0$ for all $e\in E$, meaning that $d f(v)$ is tangent to $\Pho(f_\Sigma(x)^\perp)$. There thus exists $w\in T_y \pi^{-1}(x)$ such that $d f (w) = d f (v)$, and therefore $d f(v - w) =0$. However, $v - w \neq 0$ since $d \pi(v -w) = u\neq 0$. Hence, $f$ is not an immersion.
\end{proof}

As a corollary, we obtain that holonomies of fibered photon structures are exactly maximal representations.

\begin{coro}
Let $\rho: \Gamma \to \SO_0(2,n+1)$ be a representation. Then the following are equivalent:
\begin{itemize}
\item[$(i)$] $\rho$ is maximal,
\item[$(ii)$] there exists a $\rho$-equivariant space-like immersion from $\widetilde{\Sigma}$ to $\H^{2,n}$,
\item[$(iii)$] there exists a $\Pho(\R^{2,n})$-bundle over $\Sigma$ with a fibered photon structure whose holonomy is $\rho$.
\end{itemize}
\end{coro}

\begin{proof}
The equivalence between $(ii)$ and $(iii)$ follows directly from Lemma \ref{l:FiberedPhoton=Spacelike}, the implication $(i)\Rightarrow (ii)$ follows from Proposition \ref{p:ExistenceMaximalSurface} and the implication $(ii)\Rightarrow (i)$ is the content of Proposition \ref{p:SpacelikeImmersion=>Maximal}.
\end{proof}

\begin{rmk}
Note that our construction of a fibered photon structure with holonomy a maximal representation $\rho$ uses the existence of a $\rho$-equivariant space-like immersion of $\widetilde{\Sigma}$ into $\H^{2,n}$. One could hope for a geometric proof of this fact. Morally, the Anosov property for maximal representations tells us that $\rho$ preserves a space-like circle in $\partial_\infty \H^{2,n}$ which ``obviously'' bounds a $\rho$-invariant space-like disc in $\H^{2,n}$. For $n =1$, such a disc is given for instance by (a smoothening of) the upper boundary of the convex hull of the limit set. In higher dimension, however, we do not know any direct construction of this disc and have no choice but to use the maximal surface given by the cyclic Higgs bundle.
\end{rmk}

Let us now describe how the topology of a photon bundle with fibered photon structure depends on the associated equivariant space-like immersion.

Consider $\rho: \Gamma \to \SO_0(2,n+1)$ a maximal representation, and $u: \widetilde\Sigma \to \H^{2,n}$ a $\rho$-equivariant space-like immersion. Let $M$ be the photon bundle and $\dev: \widetilde{M}\to \Pho(\R^{2,n+1})$ the developing map of the fibered photon structure associated to $u$.

Let $E_\rho$ be the flat $\R^{2,n+1}$-bundle over $\Sigma$ with holonomy $\rho$. Recall that the main Gauss map of $u$ (see Definition \ref{d-Gaussmaps}) defines an orthogonal splitting $E_\rho= T^u \oplus \ell^u \oplus N^u$, where $\ell^u_x$ is the line $u(x)$ and $d u$ defines an isomorphism from $T\Sigma$ to $T^u$.
\footnote{Note that by Corollary \ref{c:nablal}, if $u$ is the maximal space-like immersion, then this splitting coincides with the one obtained in Theorem \ref{p:decompositionbundle}.} By construction, the fibered photon structure corresponding to $u$ is on the $\Pho(\R^{2,n})$-bundle $\Pho(T^u\oplus N^u)$. We have the following:

\begin{lem}
The isomorphism class of the $\Pho(\R^{2,n})$-bundle $\Pho(T^u\oplus N^u)$ is characterized by the topological type of $N^u$.
\end{lem}

\begin{proof}
The principal $\mathrm{O}(2,n)$-bundle associated to the $\Pho(\R^{2,n})$-bundle $\pi: \Pho(T^u \oplus N^u) \to \Sigma$ corresponds to the reduction of structure group given by $T^u \oplus N^u\subset E_\rho$. The orthogonal splitting $T^u\oplus N^u$ gives a further reduction of structure group to $\mathrm{O}(2)\times\mathrm{O}(n)$. Since $T^u$ is isomorphic to $T\Sigma$ and $\Sigma$ is orientable, there is a further structure group reduction to $\SO(2)\times \mathrm{O}(n)$. The topological type of this $\mathrm{O}(2,n)$-bundle is thus given by the absolute value of the degree of $T^u$ and the topological type of $N^u$. Again, since $T^u$ is isomorphic to $T\Sigma$, we have $|\deg(T^u)|=2g-2$.
\end{proof}

\noindent\textbf{The case $\SO_0(2,3)$.} For the case of $\SO_0(2,3)$, one can say more about the topology of $\Pho(T^u \oplus N^u)$. Recall from Remark \ref{r: Gothen v nonGothen components} that the space of maximal $\SO_0(2,3)$-representations decomposes as 
\[\bigsqcup\limits_{sw_1\neq 0,\ sw_2}\Rep^{max}_{sw_1,sw_2}(\Gamma,\SO_0(2,3))\ \sqcup\bigsqcup_{0\leq d\leq 4g-4}\Rep_{d}^{max}(\Gamma,\SO_0(2,3)),\]
and that the components $\bigsqcup\limits_{0< d\leq 4g-4}\Rep_{d}^{max}(\Gamma,\SO_0(2,3))$ are called Gothen components while the rest of the components are called reducible components. 

Let $\rho:\Gamma \to \SO_0(2,3)$ be a maximal representation and $E_\rho=T^u\oplus l^u \oplus N^u$ the splitting associated to the main Gauss map of the unique $\rho$-equivariant maximal space-like embedding $u: \widetilde\Sigma \to \H^{2,n}$. We have the following:

\begin{lem} \label{l:ComponentsPho(2,2)}
The total space of the $\Pho(\R^{2,2})$-bundle $\Pho(T^u\oplus N^u)\to\Sigma$ is connected if and only if the first Stiefel-Whitney class of $N^u$ is non-zero.
\end{lem}
\begin{proof}
The splitting $T^u\oplus N^u$ gives a reduction of the principal $\mathrm{O}(2,2)$-bundle underlying $\Pho(T^u\oplus N^u)$ to a principal $\SO(2) \times \mathrm{O}(2)$-bundle. The stabilizer of a photon in $\R^{2,2}$ under the action of $\SO(2) \times\mathrm{O}(2)$ is conjugated to the diagonal embedding of $\SO(2)$, which is a connected subgroup. Therefore, $\Pho(T^u\oplus N^u)$ is connected if and only if the principal $\SO(2) \times \mathrm{O}(2)$-bundle is connected. This happens exactly when the first Stiefel-Whitney class of $N^u$ is non-zero.
\end{proof}

Recall that $\Pho(\R^{2,2})$ is disconnected and that the developing map of the fibered photon structure on $\Pho(T^u\oplus N^u)$ is injective. In particular, the image of $\dev$ has two connected components. 
The topology of $\Pho(T^u\oplus N^u)$ is given by the following:

\begin{lem}\label{l: O(U,V) bundle description}
Let $\rho$ be a maximal $\SO_0(2,3)$-representation and $\Pho(T^u\oplus N^u)$ be the $\Pho(\R^{2,2})$-bundle associated to the $\rho$-equivariant maximal space-like embedding $u: \widetilde\Sigma \to \H^{2,n}$. 
	\begin{itemize}
		\item If $\rho$ is in the Gothen component $\Rep^{max}_d(\Gamma,\SO_0(2,3))$, or in the reducible component $\Rep^{max}_0(\Gamma,\SO_0(2,3))$, then $\Pho(T^u\oplus N^u)$ is the disjoint union of two circle bundles with degrees $2g-2+d$ and $2g-2-d$ 
		that we denote $\Pho^+(T^u\oplus N^u)$ and $\Pho^-(T^u\oplus N^u)$ respectively.
		\item If $\rho$ is in the reducible component $\Rep^{max}_{sw_1,sw_2}(\Gamma,\SO_0(2,3))$, then $\Pho(T^u\oplus N^u)$ is connected.
	\end{itemize}
\end{lem}
\begin{proof}
In the first case, the first Stiefel--Whitney class of $N^u$ vanishes and we can thus choose an orientation of $N^u$ such that $\deg(N^u)=d\geq 0$. 
The two connected components of $\Pho(T^u\oplus N^u)$ are then given by the graphs of linear isometries $\varphi: T^u \to N^u$ that preserve and reverse the orientation respectively. We respectively call them $\Pho^+(T^u\oplus N^u)$ and $\Pho^-(T^u\oplus N^u)$.

The complex structure $J_{T^u} : T^u \longrightarrow T^u$ given by the rotation of angle $\pi/2$ defines a canonical identification between $T^u$ and $\text{Ker}(J_{T^u}-i\text{Id})\cong \mathcal{K}^{-1} \subset T^u\otimes \mathbb{C}$.
In a same way, the complex structure $J_{N^u}: N^u \longrightarrow N^u$ identifies $N^u$ with a holomorphic line sub-bundle $\mathcal{N}\subset N^u\otimes \C$, and $\mathcal{N}$ has degree $d$.
Under these identifications, $\Pho^+(T^u\oplus N^u)$ corresponds to unit vectors in $\text{Hom}(\mathcal{K}^{-1},\mathcal{N})= \mathcal{KN}$. Therefore, the degree of $\Pho^+(T^u\oplus N^u)$ is $2g-2+d$. In the same way, one gets that the degree of $\Pho^-(T^u\oplus N^u)$ is $2g-2-d$.

In the second case, the first Stiefel-Whitney class of $N^u$ is non-zero, hence $\Pho(T^u\oplus N^u)$ is connected by Lemma \ref{l:ComponentsPho(2,2)}.
\end{proof}

\begin{rmk}
Note that, a priori, the topology of the photon structure associated to a $\rho$-equivariant space-like immersion depends on this immersion. We do not know whether the space of equivariant space-like immersions is connected, and could imagine that it has several connected components which give rise to fibered photon structures which are not isotopic within the space of fibered photon structures.

 However, even if this is the case, in Section \ref{ss:equivalence of Structures} we will show that two such photon structures are always isomorphic as photon structures. Indeed, it will be proven that all of these photon structures are isomorphic to those constructed by Guichard--Wienhard. In particular, two $\rho$-equivariant space-like immersions give rise to isomorphic photon bundles. 
\end{rmk}

\subsection{Einstein structures for $\SO_0(2,3)$-Hitchin representations} \label{ss:EinsteinHitchin}

Here we prove Theorem \ref{t:EinsteinHitchin}, namely that one can associate to any $\SO_0(2,3)$-Hitchin representation a maximal fibered conformally flat Lorentz structure on the unit tangent bundle of $\Sigma$.  
More generally, we construct these structures for special $\SO_0(2,3)$ representations which give rise to cyclic Higgs bundles.  


\begin{defi}
A \textit{conformally flat Lorentz structure} (CFL structure) on a three dimensional manifold $M$ is a $(G,X)$-structure with $G=\SO_0(2,3)$ and $X=\Ein^{1,2}$.
\end{defi}

A \textit{space-like circle} in $\Ein^{1,2}$ is the intersection of a 3-dimensional linear subspace of $\R^{2,3}$ of signature $(2,1)$ with $\Ein^{1,2}$. Note that a space-like circle is thus a copy of $\Ein^{1,0}$ in $\Ein^{1,2}$. The set of space-like circles in $\Ein^{1,2}$ is the pseudo-Riemannian symmetric space 
$$\Gr_{(2,1)}(\R^{2,3}):= \SO(2,3)/\text{S}(\mathrm{O}(2,1)\times \mathrm{O}(2)).$$

\begin{defi}
A CFL structure on a circle bundle $\pi: M\longrightarrow \Sigma$ is called \textit{fibered} if the developing map sends each fiber onto a space-like circle in $\Ein^{1,2}$ and the holonomy is trivial along the fiber.

Two fibered CFL structures on $M$ are \emph{equivalent} if there exists a diffeomorphism $f: M \to M$ which preserves the fibers, is isotopic to the identity  and defines an equivalence of $\big(\SO_0(2,3), \Ein^{1,2} \big)$-structures (see Definition \ref{(G,X)-structures}).
\end{defi}

In particular, the holonomy of a fibered CFL structure can thus be written as $\rho\circ\pi_*$ where $\rho: \Gamma \to \SO_0(2,3)$. Also, in a similar way to fibered photon structures, one can associate to a fibered CFL structure on $M$ a $\rho$-equivariant map
$$\Psi: \widetilde{\Sigma} \longrightarrow \Gr_{(2,1)}(\R^{2,3}).$$
The map $\Psi$ sends a point $x\in\widetilde\Sigma$ to the element in $\Gr_{2,1}(\R^{2,3})$ corresponding to the space-like circle $\dev(\pi^{-1}(x))$. 
\begin{defi}
A fibered CFL structure will be called \textit{maximal} if $\Psi$ is an extremal space-like immersion.
\end{defi}

Note that, up to the action of an element $g\in\SO_0(2,3)$, the surface $\Psi(\widetilde\Sigma)$ only depends on the equivalence class of the fibered CFL structure.

Consider a representation $\rho\in \Rep\big(\Gamma,\SO_0(2,3)\big)$ such that there exists a Riemann surface structure $X\in \mathcal{T}(\Sigma)$ satisfying the property that the associated $\SO_0(2,3)$-Higgs bundle $(\mathcal{E},\Phi)$ is cyclic (see Definition \ref{d:CyclicHiggsBundle}) and has the form
\[\xymatrix@R=-.2em{\mathcal{L}\ar[r]^\beta&\mathcal{O}\ar[r]^\beta&\mathcal{L}^{-1}\ar@/^/[ddddl]^1\ar@/^/[ddl]_\gamma\\&\oplus&\\&\mathcal{KL}\ar@/^/[uul]_1&\\&\oplus&\\&\mathcal{K}^{-1}\mathcal{L}^{-1}\ar@/^/[uuuul]^\gamma&}~,\]
where $\mathcal L$ is a holomorphic line bundle of degree $0\leq d\leq 2g-2$ and $\beta\in H^0(X,\mathcal{L}^{-1}\mathcal{K})$ is non-zero. In this case, the splitting $\mathcal{E}=\mathcal{K}\mathcal{L}\oplus \mathcal L\oplus\mathcal{O}\oplus \mathcal{L}^{-1}\oplus \mathcal{K}^{-1}\mathcal{L}^{-1}$ is orthogonal with respect to the Hermitian metric $h$ solving the self-duality equations.
Note that the form of these Higgs bundles is similar to the cyclic Higgs bundles in the Gothen components given in \eqref{eq:GothenHiggsdiagram}. The intersection of the Gothen components with Higgs bundles of the above form occurs when $\mathcal{L}=\mathcal{K}$. By Remark \ref{r: Gothen v nonGothen components}, this intersection corresponds exactly to Higgs bundles in the Hitchin component. 

The associated anti-linear involution $\lambda: \mathcal{E} \longrightarrow \mathcal{E}$ fixing the flat $\R^{2,3}$-bundle $E_\rho$ fixes $\mathcal{O}$, $\mathcal L\oplus \mathcal{L}^{-1}$ and $\mathcal{K}\mathcal L\oplus \mathcal{K}^{-1}\mathcal{L}^{-1}$, and one gets a splitting
$$E_\rho= U\oplus \ell \oplus V,$$
where $\ell=\text{Fix}(\lambda_{\vert \mathcal{O}})$ is trivial, $U=\text{Fix}(\lambda_{\vert \mathcal L\oplus \mathcal{L}^{-1}})$ and $V=\text{Fix}(\lambda_{\vert \mathcal{K}\mathcal L\oplus \mathcal{K}^{-1}\mathcal{L}^{-1}})$.

Let $M$ be set of points $u$ in the total space of $U$ such that $\norm{u}^2 =1$,  where the norm is taken with respect to the signature $(2,3)$ metric on $\pi^*E_\rho$. Since $U$ has degree $d$, $M$ is a circle bundle of Euler class $d$ over $\Sigma$. Let $\pi: M\longrightarrow \Sigma$ denote the fibration.

The bundle $\pi^*U$ over $M$ admits a tautological section $s_2$ with $\Vert s_2 \Vert^2=1$. If $s_1$ is the section of the trivial line sub-bundle $\pi^*\ell$ normalized such that $\Vert s_1 \Vert^2=-1$, then the non-zero section $s=s_1+s_2$ of $\pi^*E_\rho$ has zero norm. The section $s$ thus defines a section $\sigma$ of the flat homogeneous bundle $\pi^*\Ein(E)$ where 
$$\Ein(E_\rho):=\big(P_\rho\times \Ein^{1,2}\big)/\SO_0(2,3)$$
and $P_\rho$ is the flat $\SO_0(2,3)$-bundle with holonomy $\rho$.
More concretely, the fiber of $\pi^*\Ein(E)$ over $x\in M$ is the set of isotropic vectors in $(\pi^* E_\rho)_x$.

\begin{prop}
The section $\sigma\in\Omega^0\big(M,\pi^*\Ein(E_\rho)\big)$ introduced above defines a maximal fibered CFL structure on $M$.
\end{prop}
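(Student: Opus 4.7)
The plan is to verify three properties in turn: (i) the section $\sigma$ is transverse to the flat horizontal distribution on $\pi^*\Ein(E)$, so it defines a CFL structure with holonomy $\rho\circ\pi_*$; (ii) the developing map sends each fiber of $\pi$ bijectively onto a space-like circle of $\Ein^{1,2}$; and (iii) the induced map $\Psi:\widetilde\Sigma\to \Gr_{(2,1)}(\R^{2,3})$ is a space-like extremal immersion, so that the CFL structure is maximally fibered. Properties (ii) and (iii) will reduce to identifications already made in the paper, while (i) is the substantial step; consistent with the remark in the introduction, it will ultimately rely on a maximum principle applied to components of the solution to the Higgs bundle equations, and is specific to representations having the cyclic form considered.

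For (i), since $\mathbf{q}(s)\equiv 0$, the covariant derivative $\nabla s$ lands in $s^\perp$ and descends to a linear map $T_xM\to s(x)^\perp/\R s(x)$ between $3$-dimensional vector spaces; transversality amounts to this being an isomorphism at every $x$. Following the template of Proposition \ref{p:s gives fibered photon structure}, pick a local holomorphic frame $\epsilon$ of $\mathcal{L}$, form the associated orthonormal frame $e_1,e_2$ of $U$, and parameterize the circle fiber by $\theta\mapsto s_2(\theta)=\cos\theta\, e_1+\sin\theta\, e_2$. Using the orthogonal splitting $s^\perp/\R s\cong (U\cap s_2^\perp)\oplus V$, the vertical derivative $\nabla_{\partial_\theta} s=-\sin\theta\, e_1+\cos\theta\, e_2$ spans $U\cap s_2^\perp$ and has no $V$-component, so matters reduce to showing that the $V$-component of the horizontal derivative defines an isomorphism $T_p\Sigma\to V_p$. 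Since $\Phi$ sends $\mathcal{O}$ into $\mathcal{L}^{-1}\otimes\mathcal{K}$, the contribution of $\nabla s_1$ lies entirely in $\ell\oplus U$; the $V$-component of $\nabla s_2$ comes from the two arrows ``$1$'' and ``$\gamma$'' of $\Phi$ (together with their adjoints) coupling $\mathcal{L}^{\pm 1}$ to $V_\mathbb{C}=\mathcal{KL}\oplus\mathcal{K}^{-1}\mathcal{L}^{-1}$. A direct computation entirely in the spirit of the one at the end of the proof of Proposition \ref{p:s gives fibered photon structure} expresses the relevant $2\times 2$ determinant, up to a nowhere-vanishing factor, as the pointwise difference $|1|^2-|\gamma|^2$ of squared Hermitian norms. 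The main obstacle is thus to establish this strict inequality everywhere on $X$; the idea is to use the Higgs bundle equations \eqref{eq: SO(2,n+1) Higgs bundle Equations} to derive an elliptic inequality for a suitable ratio of metric components of the solution $h$ and conclude by a maximum principle argument analogous to the one employed in Proposition \ref{p:MinimalSurfaceinConvexHull}.

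For (ii), the image $\sigma(M_p)$ consists, by construction, of the isotropic lines lying in the $3$-dimensional signature $(2,1)$ subspace $V_p^\perp=\ell_p\oplus U_p\subset E_p$, and by definition such a set is exactly a space-like circle in $\Ein^{1,2}$.

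For (iii), $\Psi(p)=V_p^\perp=\ell_p\oplus U_p$ by construction. Recalling from the proof of Proposition \ref{p:ExistenceMaximalSurface} that $\ell$ corresponds to the $\rho$-equivariant maximal space-like immersion $u:\widetilde\Sigma\to\H^{2,n}$ and that $U$ coincides with the image of $du$ (because $\nabla u=(\Phi+\Phi^*)u$ takes values in $U$), one sees that $\Psi$ is precisely the first Gauss map $G_1=p_1\circ\mathcal{G}$ of $u$. By Proposition \ref{p-gaussmaps}, $G_1$ is a space-like extremal immersion, so the constructed fibered CFL structure is maximal.
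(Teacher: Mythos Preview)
Your outline is correct and matches the paper's approach closely: transversality reduces to the nonvanishing of $\lVert 1\rVert^2-\lVert\gamma\rVert^2$, which is then obtained from a maximum-principle argument using the Higgs bundle equations, and parts (ii) and (iii) are exactly the identifications the paper makes (fibers go to the space-like circle in $\ell_p\oplus U_p$, and $\Psi$ is the first Gauss map, extremal by Proposition~\ref{p-gaussmaps}). The two places you defer---the determinant computation yielding $\lVert 1\rVert^2-\lVert\gamma\rVert^2$ and the elliptic inequality---are precisely what the paper carries out explicitly: it writes the section $s$ in the frame of the cyclic splitting, computes $\nabla_{\partial_z}s$, $\nabla_{\overline\partial_z}s$, $\nabla_{\partial_\theta}s$, reduces the $5\times 5$ determinant to the $\mathcal{KL}\oplus\mathcal{K}^{-1}\mathcal{L}^{-1}$ block, and then shows directly (using holomorphicity of $\gamma$ and the curvature equation $F_{\mathcal{KL}}=\lVert 1\rVert^2-\lVert\gamma\rVert^2$) that $\Delta\log\lVert\gamma\rVert^2=2\lVert\gamma\rVert^2-2\lVert 1\rVert^2$, so at a maximum $\lVert\gamma\rVert^2<\lVert 1\rVert^2$.
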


\begin{proof}

In the splitting $\mathcal{E}=\mathcal{KL}\oplus \mathcal L\oplus\mathcal{O}\oplus \mathcal{L}^{-1}\oplus \mathcal{K}^{-1}\mathcal{L}^{-1}$, the Higgs field $\Phi$ and its dual $\Phi^*$ with respect to $h$ have the following expression:
$$\Phi=\left(\begin{array}{lllll}
0 & 0 & 0 & \gamma & 0 \\
1 & 0 & 0 & 0 & \gamma \\
0 & \beta & 0 & 0 & 0 \\
0 & 0 & \beta & 0 & 0 \\
0 & 0 & 0 & 1 & 0
\end{array}\right)~~~~~~\text{and}~~~~~~\Phi^*=\left(\begin{array}{lllll}
0 & 1^* & 0 & 0 & 0 \\
0 & 0 & \beta^* & 0 & 0 \\
0 & 0 & 0 & \beta^* & 0 \\
\gamma^* & 0 & 0 & 0 & 1^* \\
0 & \gamma^* & 0 & 0 & 0
\end{array}\right), $$
where $\beta^*\in \Omega^{0,1}\big(X,\Hom(\mathcal{O},\mathcal{L})\big)\cong \Omega^{0,1}\big(X,\Hom(\mathcal{L}^{-1},\mathcal{O})\big)$ is the form dual to $\beta$ using the Hermitian metric on $\mathcal L$ and $\mathcal{O}$ (and similarly for $1^*$ and $\gamma^*$).

Consider a local chart $(z,\theta)$ on $\widetilde{\Sigma}\times S^1$, where $z$ is holomorphic. In this chart, the sections $s_1$ of $\pi^*\ell$ and $s_2$ of $\pi^*U$ defined above are given by
$$s_1=\left(\begin{array}{l} 0 \\ 0 \\ 1 \\ 0 \\ 0 \end{array}\right)~~~~\text{and}~~~~s_2 = \frac{1}{\sqrt{2}}\left(\begin{array}{l} 0 \\ \mu^{-1}e^{i\theta} \\ 0 \\ \mu e^{-i\theta} \\ 0 \end{array}\right),$$
where $\mu$ is the norm of the local section $\left(\begin{array}{l} 0 \\ e^{i\theta} \\ 0 \\ 0 \\ 0 \end{array}\right)$ with respect to $\pi^*h$. In particular, if $l$ is the local section of $\pi^* \Ll$ corresponding to $e^{i\theta}$, then the restriction of $\pi^*h$ to $\pi^*(\Ll\oplus \Ll^{-1})$ is locally given by 
$$\pi^*h_{\vert \pi^*(L\oplus L^{-1})} = \mu^2 l^{-1}\otimes\overline{l}^{-1} + \mu^{-2}l \otimes \overline l.$$

Writing the flat connection $\nabla= A +\Phi + \Phi^*$ (where $A=d + \partial \log h$ is the Chern connection of $(\pi^*\mathcal{E},\pi^*h)$), one obtains
$$\nabla s_1 = \left(\begin{array}{l}0 \\ \beta^*(s_1) \\ 0 \\ \beta(s_1) \\ 0\end{array}\right).$$
The calculations for $s_2$ are more tedious. We get

$$A_{\partial_\theta}s_2=\frac{1}{\sqrt{2}} \left(\begin{array}{l}0 \\ i\mu^{-1}e^{i\theta} \\ 0 \\ -i\mu e^{-i\theta} \\ 0\end{array}\right),~ A_{\partial_z}s_2=\frac{1}{\sqrt{2}} \left(\begin{array}{l}0 \\ \mu^{-2}\partial_ze^{i\theta} \mu \\ 0 \\ -\partial_z\mu e^{-i\theta}  \\ 0\end{array}\right),~A_{\overline{\partial}_z}s_2=\frac{1}{\sqrt{2}} \left(\begin{array}{l}0 \\ -\mu^{-2}\overline{\partial}_z\mu e^{i\theta} \\ 0 \\ \overline{\partial}_z\mu e^{-i\theta} \\ 0\end{array}\right),$$

\noindent and

$$\Phi(\partial_z)(s_2) =\frac{1}{\sqrt{2}} \left(\begin{array}{l}  \gamma(\partial_z)(\mu e^{-i\theta}) \\ 0 \\  \beta(\partial_z)(\mu^{-1}e^{i\theta}) \\ 0 \\  1(\partial_z)(\mu e^{-i\theta}) \end{array}\right),~ \Phi^*(\overline\partial_z)(s_2) =\frac{1}{\sqrt{2}} \left(\begin{array}{l} 1^*(\overline\partial_z)(\mu e^{i\theta}) \\ 0 \\  \beta^*(\overline\partial_z)(\mu^{-1}e^{-i\theta}) \\ 0 \\  \gamma^*(\overline\partial_z)(\mu e^{i\theta})\end{array}\right).$$

So finally, using $s=s_1+s_2$, we get
$$\nabla_{\partial_z}s=\frac{1}{\sqrt 2}\left(\begin{array}{l}  \gamma(\partial_z)(\mu e^{-i\theta}) \\ \mu^{-2}\partial_ze^{i\theta} \mu \\  \beta(\partial_z)(\mu^{-1}e^{i\theta}) \\ -\partial_z\mu e^{-i\theta} + \sqrt{2}\beta(\partial_z)(s_1) \\  1(\partial_z)(\mu e^{-i\theta})\end{array}\right),$$

$$\nabla_{\overline{\partial}_z}s=\frac{1}{\sqrt{2}} \left(\begin{array}{l} 1^*(\overline\partial_z)(\mu e^{i\theta}) \\ -\mu^{-2}\overline{\partial}_z\mu e^{i\theta} + \sqrt{2}\beta^*(\overline{\partial}_z)(s_1) \\  \beta^*(\overline\partial_z)(\mu^{-1}e^{-i\theta}) \\ \overline{\partial}_z\mu e^{-i\theta} \\  \gamma^*(\overline\partial_z)(\mu e^{i\theta})\end{array}\right) $$
and
$$\nabla_{\partial_\theta}s =\frac{1}{\sqrt{2}} \left(\begin{array}{l}0 \\ i\mu^{-1}e^{i\theta} \\ 0 \\ -i\mu e^{-i\theta} \\ 0\end{array}\right).$$

The section $\sigma\in \Omega^0\big(M,\pi^*\Ein(E)\big)$ is transverse to the flat structure if and only if the sections $\{s,\nabla_{\partial_z}s,\nabla_{\overline{\partial}_z}s,\nabla_{\partial_\theta}s  \}$ generate a 4-dimensional space at each point. In particular the non-vanishing of the determinant $\big\vert s_1~s_2~ \nabla_{\partial_z}s~\nabla_{\overline{\partial}_z}s~\nabla_{\partial_\theta}s\big\vert$ is a sufficient condition.

The three vectors $\{s_1, s_2, \nabla_{\partial_\theta} s\}$ span the bundle $\pi^*(\Ll\oplus \mathcal{O} \oplus \Ll^{-1})$ at each point. In particular, the determinant $\big\vert s_1~s_2~ \nabla_{\partial_z}s~\nabla_{\overline{\partial}_z}s~\nabla_{\partial_\theta}s\big\vert$ vanishes exactly when the first and last component of $\{\nabla_{\partial_z}s,~\nabla_{\overline{\partial}_z}s\}$ are proportional, that is when $\Vert \gamma \Vert^2 = \Vert 1\Vert^2$.

Because the section $\gamma\in H^0(X,\mathcal{K}^2\mathcal{L}^2)$ is holomorphic, we have
$$ \Delta \log\Vert \gamma\Vert^2 = -2F_{\mathcal{KL}},$$
where $F_{\mathcal{KL}}$ is the curvature of the bundle $\mathcal{KL}$ with respect to the Hermitian metric $h$.
By the Higgs bundle equation, $F_{\mathcal{KL}} = \Vert 1 \Vert^2-\Vert \gamma\Vert^2$ and we obtain
$$\Delta \log \Vert \gamma\Vert^2=2\Vert \gamma\Vert^2-2\Vert 1 \Vert^2.$$
The maximum principle applies: at a maximum of $\Vert \gamma \Vert^2$, one has $\Vert \gamma\Vert^2<\Vert 1\Vert^2$ and so $\Vert 1 \Vert^2\neq \Vert \gamma \Vert^2$ on $\Sigma$. In particular, $\sigma\in\Omega^0\big(M,\pi^*\Ein(E)\big)$ defines a CFL structure on $M$.

Note also that the associated developing map sends the fiber of $M$ over $x$ to the space-like circle corresponding to the signature $(2,1)$ linear subspace $\ell_x\oplus U_x$, so the CFL structure is fibered. Finally, the corresponding equivariant map $\Psi: \widetilde\Sigma \longrightarrow \Gr_{2,1}(\R^{2,3})$ is the first Gauss map of the maximal surface $u: \widetilde\Sigma\longrightarrow \H^{2,2}$. By Proposition \ref{p-gaussmaps}, $\Psi$ is extremal.
\end{proof}
\begin{rmk}
	For $\Ll=\mathcal{K},$ the above construction gives maximal fibered CFL structures on $T^1\Sigma$ whose holonomy factors through a Hitchin representation. But note also that, for any $d\in \mathbb{Z}$ with $\vert d\vert < 2g-2$, our construction gives examples of maximal fibered CFL structures on a degree $d$ circle bundle over $\Sigma$ whose holonomy factor through representations in the connected component of $\Rep(\Gamma,\SO_0(2,3))$ of Toledo invariant $d$. Unfortunately, for $|d|<2g-2$, these representations do not form an open domain of the representation variety and we do not know how to characterize the representations arising this way. One can show that these representations do not come from representations into $\SO(2,2),$ so these CFL structures do not come from AdS structures on the circle bundle. It would be interesting to understand whether these representations are Anosov and whether the Einstein structures constructed above are deformation of anti-de Sitter structures.
\end{rmk}

\section{Relation with the Guichard-Wienhard construction}

In this section, we show that both the fibered photon structure of Theorem \ref{t: fibered photons and maximal reps} and the maximal CFL structures of Theorem \ref{t:EinsteinHitchin} agree with the geometric structures constructed by Guichard and Wienhard in \cite{wienhardanosov}. As a corollary, we describe the topology of the geometric structures of Guichard-Wienhard.

\subsection{Geometric structures ``\`a la Guichard-Wienhard''} Here we explain the construction of geometric structures in \cite{wienhardanosov} in the case of Anosov representations of a surface group in $\SO_0(2,n+1)$.
Let $\mathrm{P}_1$ and $\mathrm{P}_2$ be respectively the stabilizer of an isotropic line and of an isotropic 2-plane in $\R^{2,n+1}$. 
In particular, $\SO_0(2,n+1)/\mathrm{P}_1\cong \Ein^{1,n}$ is the Einstein Universe and $\SO_0(2,n+1)/\mathrm{P}_2\cong \Pho(\R^{2,n+1})$ is the set of photons in $\R^{2,n+1}$.

Given a representation $\rho\in \Rep(\Gamma,\SO_0(2,n+1))$  which is $\mathrm{P}_i$-Anosov ($i=1,2$), there exists a continuous $\rho$-equivariant map 
\[\xi_i: \partial_\infty\Gamma \longrightarrow \SO_0(2,n+1)/\mathrm{P}_i.\] 
The following was established in \cite{labouriehyperconvex} for Hitchin representations and in \cite{BILW} for maximal representations into $\Sp(2n,\R)$; using the existence of a \emph{tight} embedding $\iota: \SO_0(2,n+1) \hookrightarrow \Sp(2m,\R)$ for some $m\in \N$ (see \cite{pozzettihamlet}), we have:
\begin{prop}\label{p:AnosovnessofMaximalAndHitchin}
	If $\rho\in \Rep(\Gamma,\SO_0(2,n+1))$ is a maximal representation, then it is $\mathrm{P}_1$-Anosov. If $\rho\in \Rep(\Gamma,\SO_0(2,3))$ is a Hitchin representation, then $\rho$ is both $\mathrm{P}_1$-Anosov and $\mathrm{P}_2$-Anosov. 
\end{prop}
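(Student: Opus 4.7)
The plan is to translate two known results into our setting: for maximal representations I rely on \cite{BILW}, and for the additional $P_2$-Anosov property of Hitchin representations on \cite{labouriehyperconvex}.

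For a maximal representation $\rho:\Gamma\to\SO_0(2,n+1)$, the $P_1$-Anosov property is essentially repackaged from Theorem \ref{t:AnosovCurve} and Corollary \ref{c:AnosovProximal}. The curve $\xi:\partial_\infty\Gamma\to\Ein^{1,n}$ plays the role of both the equivariant boundary map into $G/P_1$ and (since $P_1$ is conjugate to its opposite in $\SO_0(2,n+1)$) into $G/P_1^-$. Transversality of $\xi(x)$ and $\xi(y)$ for $x\neq y$ follows from the space-like property: two distinct isotropic lines in the image of $\xi$ pair non-trivially under the quadratic form, and hence span a Lorentzian plane. The dynamical content---proximality of $\rho(\gamma)$ at $(\xi(\gamma_+),\xi(\gamma_-))$ with a spectral gap for every $\gamma\in\Gamma$---is exactly Corollary \ref{c:AnosovProximal}. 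To promote these pointwise data to the uniform exponential contraction required by the Anosov definition, the natural route is to invoke the singular-value criterion of Kapovich--Leeb--Porti and Gu\'eritaud--Guichard--Kassel--Wienhard, according to which uniform proximality combined with the continuous boundary curve on the compact space $\partial_\infty\Gamma$ is sufficient. Alternatively, and closer to the historical route, one can use the bounded K\"ahler class techniques of \cite{BILW}.

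Hitchin representations in $\SO_0(2,3)$ are in particular maximal, so the $P_1$-Anosov property follows from the first part. For the $P_2$-Anosov property, the plan is to exploit the exceptional isomorphism $\PSp(4,\R)\cong\SO_0(2,3)$, under which the Hitchin component of $\SO_0(2,3)$ corresponds to the Hitchin component of $\PSp(4,\R)$. By \cite{labouriehyperconvex}, such a representation is Anosov with respect to a Borel subgroup $B\subset\PSp(4,\R)$, with boundary curve in the full flag variety of $\PSp(4,\R)$ given by the hyperconvex Frenet curve. Since $P_2$ (the stabilizer of a photon in $\R^{2,3}$) corresponds under the isomorphism to the stabilizer of a line in $\R^4$---a maximal parabolic containing $B$---the $B$-Anosov property immediately implies the $P_2$-Anosov property, with boundary curve obtained by projecting the Frenet curve to $\ProjR{3}\cong\Pho(\R^{2,3})$. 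The only real obstacle here is making explicit the dictionary between the $\PSp(4,\R)$ and $\SO_0(2,3)$ pictures (lines in $\R^4$ vs.\ photons; Lagrangians in $\R^4$ vs.\ isotropic lines in $\R^{2,3}$) so that Labourie's Frenet curve can be identified with the $\rho$-equivariant curves in $\Ein^{1,2}$ and $\Pho(\R^{2,3})$; this is classical and requires no new dynamical input.
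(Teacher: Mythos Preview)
Your proposal is correct and follows exactly the same approach as the paper: the paper does not give an independent proof but simply records the proposition as a direct consequence of \cite{BILW} for the maximal case and \cite{labouriehyperconvex} for the Hitchin case. Your write-up merely unpacks these citations (via Theorem~\ref{t:AnosovCurve}, Corollary~\ref{c:AnosovProximal}, and the $\PSp(4,\R)\cong\SO_0(2,3)$ dictionary), so there is nothing to add.
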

If $\rho$ is $\mathrm{P}_1$-Anosov, define the subset $K_\rho^2\subset \Pho(\R^{2,n+1})$ by
\[	K_\rho^2:=\left\{ V\in\Pho(\R^{2,n+1})\ |\ \xi_1(x)\subset V\  \text{for\ some}\ x\in \partial_\infty\Gamma \right\}.\]
If $\rho$ is $\mathrm{P}_2$-Anosov define the subsets $K_\rho^1\subset \Ein^{1,n}$ by
\[K_\rho^1:=\left\{ \ell\in\Ein^{1,n}\ |\ \ell\subset \xi_2(x)\  \text{for\ some}\ x\in \partial_\infty\Gamma \right\}.\]

Note that $K_\rho^2$ is homeomorphic to $\partial_\infty \Gamma\times \S^{n-1}$. If $\rho\in \Rep(\Gamma,\SO_0(2,3))$ a $\mathrm{P}_2$-Anosov representation, $K_\rho^1$ is homeomorphic to $\partial_\infty \Gamma\times\S^1.$ 
The following is proved in \cite{wienhardanosov}:
\begin{theo}\label{t: GW properly discontinuous}
	If $\rho\in \Rep\big(\Gamma, \SO_0(2,n+1)\big)$ is $\mathrm{P}_1$-Anosov, then $\rho(\Gamma)$ acts properly discontinuously and co-compactly on the set 
	\[\Omega_\rho^2= \Pho(\R^{2,n+1})\setminus K_\rho^2.\] 
	Also, if $\rho\in \Rep\big(\Gamma, \SO_0(2,n+1)\big)$ is $\mathrm{P}_2$-Anosov, then $\rho(\Gamma)$ acts properly discontinuously and co-compactly on the set 
	\[\Omega_\rho^1= \Ein^{1,n}\setminus K_\rho^1.\]
	Moreover, the topology of the quotient $\rho(\Gamma)\backslash\Omega_\rho^i$ remains constant as the representation $\rho$ is varied continuously (Theorem 9.2 of \cite{wienhardanosov}).
	\end{theo}

\subsection{Equivalence of the photon structures}\label{ss:equivalence of Structures}
Here we prove that the fibered photon structures constructed in Theorem \ref{t: fibered photons and maximal reps} are equivalent to those of Guichard-Wienhard.

\begin{theo}\label{t:Equialent Photon Structures}
Let $\rho$ be a maximal representation from $\Gamma$ to $\SO_0(2,n+1)$. Let $\Pho(T^u\oplus N^u)$ be the associated $\Pho(\R^{2,n})$-bundle over $\Sigma$ constructed in Section \ref{isotropicplanes} and let $\dev_\rho$ be the developing map of the fibered photon structure on $\Pho(T^u\oplus N^u)$ from Theorem \ref{t: fibered photons and maximal reps}. Then $\dev_\rho$ takes values in $\Omega_\rho^2$ and induces a diffeomorphism from $\Pho(T^u\oplus N^u)$ to $\rho(\Gamma) \backslash \Omega_\rho^2$.
\end{theo}
\begin{proof}
Let $\rho\in\Rep^{max}(\Gamma,\SO_0(2,n+1))$ be a maximal representation, let $u:\widetilde\Sigma\to\mathbb{H}^{2,n}$ be the $\rho$-equivariant maximal surface and let $\xi:\partial_\infty\Gamma\to \Ein^{1,n}\cong\partial \mathbb{H}^{2,n}$ be the $\rho$-equivariant continuous map given by the Anosov property of $\rho.$ Recall from Corollary \ref{p:BoundaryMaximalSurface} that the boundary of $u(\widetilde\Sigma)$ corresponds to $\xi(\partial_\infty\Gamma).$ We will show that the developing map of the fibered photon structure of Theorem \ref{t: fibered photons and maximal reps} maps bijectively onto the Guichard-Wienhard domain $\Omega_\rho^2.$

In the construction of the fibered photon structure of Theorem \ref{t: fibered photons and maximal reps}, the developing map sends the fiber of the $\Pho(\R^{2,n})$-bundle over a point $x\in \widetilde\Sigma$ bijectively onto the set of photons contained in the orthogonal of $u(x)$ in $\R^{2,n+1}$. 
By Lemma \ref{l:HyperplaneSeparatesS}, the boundary of $u(\widetilde\Sigma)$ does not intersect $u(x)^\perp$ for any $x\in\widetilde\Sigma$. 
In particular, the developing map of the space-like fibered photon structure associated to $\rho$ is contained in the domain $\Omega_\rho^2$. 

For the other inclusion, suppose $V\in\Pho(\R^{2,n+1})$ is a photon and denote its orthogonal by $V^\perp.$ The restriction of the quadratic form $\mathbf{q}$ to $V^\perp$ is non-positive, and vanishes exactly on the subspace $V$. Thus, the subspace $V^\perp$ can be approximated by a sequence $W_k$ of rank $(n+1)$ negative definite subspaces. 
By Corollary \ref{c:IntersectionSpacelikeSurface}, each plane $W_{k}$ intersects the surface $u(\widetilde\Sigma)$ in exactly one point. Thus, $V^\perp$ intersects either $u(\widetilde \Sigma)$ or its boundary. This gives rise to a dichotomy:
\begin{itemize}
	\item If $V^\perp$ intersects $u(\widetilde\Sigma)$ at a point $x,$ then $V$ is contained in $\Pho(x^\perp)$ and is in the image of developing map of the fibered photon structure. 
	\item If $V^\perp$ intersects the boundary of $u(\widetilde\Sigma)$ at a point $\xi(x),$ then $V$ contains the isotropic line $\xi(x)$, and so $V$ belongs to $K^2_\rho$.
\end{itemize}
Therefore, the developing map of the fibered photon structure from Theorem \ref{t: fibered photons and maximal reps} maps surjectively onto $\Omega_\rho^2$.
\end{proof}
The following corollary is immediate:

\begin{coro}
If $\rho: \Gamma \longrightarrow \SO_0(2,n+1)$ is a maximal representation, then the quotient $\rho(\Gamma)\backslash\Omega^2_\rho$ of the Guichard-Wienhard domain of discontinuity is diffeomorphic to a $\Pho(\R^{2,n})$-bundle over $\Sigma$ and the topology of the bundle characterizes the connected component of $\rho$. 
\end{coro}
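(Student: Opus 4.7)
The plan is to read off the corollary directly from Theorem \ref{t:Equialent Photon Structures} and the topological classification of $\mathrm{O}(U,V)$ established in Section \ref{ss:geometrization}. First, given a maximal representation $\rho:\Gamma\to\SO_0(2,n+1)$, Theorem \ref{t:Equialent Photon Structures} provides a diffeomorphism
\[
\mathrm{O}(U,V)\ \xrightarrow{\ \sim\ }\ \rho(\Gamma)\backslash\Omega_\rho^2,
\]
where $U\oplus V$ is the canonical decomposition of the flat bundle $E_\rho$ coming from the Higgs bundle construction of Theorem \ref{p:decompositionbundle}. Since $\mathrm{O}(U,V)$ is by construction an iterated sphere bundle over $\Sigma$ (with fiber $\mathrm{O}(n)/\mathrm{O}(n-2)$), this settles the first part of the statement without further work.

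For the second part, I would invoke the lemma from Section \ref{ss:geometrization} identifying the characteristic classes $\chi(\mathrm{O}(U,V))\in H^1(\Sigma,\Z/2)\times H^2(\Sigma,\Z/2)$ with the Stiefel--Whitney classes of the negative definite sub-bundle $V$. For $n>2$, Proposition \ref{p:Fuchsian Locus n>2} and the surrounding discussion show that these are precisely the invariants labeling the $2^{2g+1}$ connected components of $\Rep^{max}(\Gamma,\SO_0(2,n+1))$, so the topology of the iterated sphere bundle determines the component.

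The one case that requires a moment of care is $n=2$, where one must distinguish Gothen and non-Gothen components (Proposition \ref{p:Connected Components Maximal SO(2,3)}). Here I would appeal to Lemma \ref{l: O(U,V) bundle description}, which computes the homeomorphism type of $\mathrm{O}(U,V)$ on each component: it is connected when $sw_1\neq 0$, and otherwise splits as a disjoint union of two oriented circle bundles whose degrees $2g-2\pm d$ recover the Toledo-type integer invariant $d$ of the Gothen/non-Gothen labeling. Combined with the diffeomorphism from the first step, this shows that the homeomorphism type of $\rho(\Gamma)\backslash\Omega_\rho^2$ varies as $\rho$ moves between distinct components.

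No genuine obstacle arises: the only subtle point is simply the bookkeeping of the low-rank $\SO_0(2,3)$ components in terms of the circle-bundle degrees from Lemma \ref{l: O(U,V) bundle description}, which is why I would isolate that case and treat it separately after disposing of $n>2$ by pure Stiefel--Whitney class arguments.
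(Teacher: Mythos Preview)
Your proposal is correct and follows exactly the route the paper intends: the paper simply declares this corollary ``immediate'' from Theorem \ref{t:Equialent Photon Structures}, and you have accurately unpacked what ``immediate'' means here, including the separate $n=2$ bookkeeping via Lemma \ref{l: O(U,V) bundle description}. One small correction: the fact that the Stiefel--Whitney classes of $V$ label the components for $n>2$ is not the content of Proposition \ref{p:Fuchsian Locus n>2} (which concerns the Fuchsian locus) but rather the paragraph just before it, citing \cite{MaxRepsHermSymmSpace}.
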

By Lemma \ref{l: O(U,V) bundle description}, for $\SO_0(2,3)$ we can say a little more.
\begin{coro}
If $\rho: \Gamma \longrightarrow \SO_0(2,3)$ is a maximal representation, then the quotient $\rho(\Gamma)\backslash\Omega^2_\rho$ of the Guichard-Wienhard domain of discontinuity 
	\begin{itemize}
		\item is homeomorphic to a connected $\mathrm{O}(2)$-bundle over $\Sigma$ with Stiefel-Whitney classes $(sw_1,sw_2)$ if $\rho\in\Rep^{max}_{sw_1,sw_2}(\Gamma,\SO_0(2,3))$
		\item is homeomorphic to the disjoint union of two circle bundles of degree $2g-2+d$ and $2g-2-d$ if $\rho\in\Rep^{max}_{d}(\Gamma,\SO_0(2,3))$.
	\end{itemize}
\end{coro}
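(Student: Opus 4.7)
The plan is to chain together Theorem \ref{t:Equialent Photon Structures} and Lemma \ref{l: O(U,V) bundle description}, which have already done all the heavy lifting. By Theorem \ref{t:Equialent Photon Structures}, the developing map of the fibered photon structure induces a homeomorphism $\mathrm{O}(U,V) \cong \rho(\Gamma)\backslash \Omega_\rho^2$, where $\mathrm{O}(U,V) \to \Sigma$ is the iterated sphere bundle built from the orthogonal decomposition $E_\rho = \ell \oplus U \oplus V$ of Theorem \ref{p:decompositionbundle}. Specializing to $\SO_0(2,3)$ we have $n=2$, so the fiber $\mathrm{O}(n)/\mathrm{O}(n-2) = \mathrm{O}(2)$ is a disjoint union of two circles, and hence $\mathrm{O}(U,V) \to \Sigma$ is a principal $\mathrm{O}(2)$-bundle.

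For the first bullet, $\rho \in \Rep^{max}_{sw_1,sw_2}(\Gamma,\SO_0(2,3))$ corresponds to the rank-$2$ bundle $V$ having first Stiefel-Whitney class $sw_1 \neq 0$. Lemma \ref{l: O(U,V) bundle description} then says that $\mathrm{O}(U,V)$ is connected with Stiefel-Whitney classes $(sw_1, sw_2)$, and this description transports directly through the homeomorphism of Theorem \ref{t:Equialent Photon Structures}. For the second bullet, $\rho \in \Rep^{max}_d(\Gamma,\SO_0(2,3))$ has $sw_1(V) = 0$, so the structure group reduces to $\SO(2)$ and $\mathrm{O}(U,V)$ splits into two circle bundles parametrizing the orientation-preserving and orientation-reversing isometric isomorphisms $U \to V$ respectively. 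Identifying $U$ with $\mathcal{K}^{-1}$ via its complex structure and the positively oriented $V$ with a holomorphic line subbundle $\mathcal{N}$ of degree $d$, these two components are the unit circles in $\mathcal{K}\mathcal{N}$ and $\mathcal{K}\mathcal{N}^{-1}$, of degree $2g-2+d$ and $2g-2-d$ as claimed in the lemma.

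There is essentially no obstacle in the proof itself: given the setup already in place, the corollary is a direct bookkeeping consequence. The one minor verification is that the two bullets exhaust all connected components listed in Proposition \ref{p:Connected Components Maximal SO(2,3)}, which they do, since the non-Gothen component with trivial first Stiefel-Whitney class is recovered as the $d=0$ specialization of the second bullet, giving two circle bundles each of degree $2g-2$.
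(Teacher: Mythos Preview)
Your proof is correct and matches the paper's approach exactly: the paper simply presents this corollary as an immediate consequence of combining Theorem~\ref{t:Equialent Photon Structures} with Lemma~\ref{l: O(U,V) bundle description}, without further argument. Your additional recapitulation of the line-bundle identifications from the proof of Lemma~\ref{l: O(U,V) bundle description} and the observation that the $d=0$ case recovers the non-Gothen component with $sw_1=0$ are helpful clarifications but not required.
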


\begin{rmk}
In \cite{guichardwienhardsl4}, Guichard and Wienhard explicitly describe the quotient of two connected domains of $\R\P^3$ by a Hitchin representation $\rho: \Gamma \to \text{PSp}(4,\R).$ They show that the quotient gives two circle bundles over $\Sigma$, one of degree $6g-6$, the other of degree $2g-2$. In particular, these bundles are equipped with a $(\text{PSp}(4,\R),\ProjR3)$-structure. When the degree is $2g-2,$ this explicit description was also obtained using cyclic Higgs bundles by Baraglia in \cite[Section 3.5]{BaragliaThesis}.

By considering the action of $\Sp(4,\R)$ on $\Lambda^2\R^4$, one obtains the low dimensional isomorphism $\text{PSp}(4,\R)\cong \SO_0(2,3)$. Under this isomorphism, lines in $\R^4$ correspond to isotropic $2$-planes in $\R^{2,3}$. It follows that a $(\text{PSp}(4,\R),\ProjR3)$-structure is the same thing as a photon structure (see \cite[Section 5]{charette} for more details).

As a result, the two $(\text{PSp}(4,\R),\ProjR3)$-structures on the circle bundles constructed by Guichard and Wienhard for Hitchin representations correspond to the fibered photon structures on $\Pho^+(U\oplus V)$ and $\Pho^-(U\oplus V)$ of Lemma \ref{l: O(U,V) bundle description}.
\end{rmk}

\subsection{Equivalence of Einstein structures}
For a Hitchin representation $\rho:\Gamma\to\SO_0(2,3)$ there is a Guichard-Wienhard domain $\Omega^1_\rho$ in $\Ein^{1,2}$ by Proposition \ref{p:AnosovnessofMaximalAndHitchin}.

Guichard--Wienhard's theorem (Theorem \ref{t: GW properly discontinuous}) implies that the action of $\rho(\Gamma)$ on $\Omega^1_\rho$ is properly discontinuous and co-compact. Actually, one can be a bit more precise. Mimicking their construction of projective structures associated to Hitchin representations into $\SL(4,\R)$ (see \cite{guichardwienhardsl4}), one can give\footnote{This construction was done in some working notes that Guichard and Wienhard kindly shared with us.} a $\rho$-equivariant parametrization of $\Omega^1_\rho$ by the set $\partial_\infty \Gamma^{(3)}$ of oriented triples of distinct points in $\partial_\infty \Gamma$. It follows that $\rho(\Gamma) \backslash \Omega^1_\rho$ is homeomorphic to $T^1\Sigma$. However, the circle bundle structure is not appearing in this construction.

Here, we prove that the conformally flat $3$-manifold associated to $\rho$ by Theorem \ref{t:EinsteinHitchin} is isomorphic (as a conformally flat $3$-manifold) to $\rho(\Gamma) \backslash \Omega^1_\rho$.

\begin{theo} \label{t:EquivalenceEinsteinGW}
Let $\rho: \Gamma \to \SO_0(2,3)$ be a Hitchin representation. Then the developing map $\dev_\rho$ constructed in Section \ref{ss:EinsteinHitchin} is a global homeomorphism from $T^1\widetilde{\Sigma}$ to $\Omega^1_\rho$.
\end{theo}

The proof is less straightforward than that of Theorem \ref{t:Equialent Photon Structures}. We first prove the following lemma, which settles the case when $\rho$ is Fuchsian, and then argue by continuity, using the Ehresmann--Thurston principle.

\begin{lem} \label{l:GWStructureFuchsianCase}
Suppose that $\rho = m_{irr} \circ j$, where $j: \Gamma \to \PSL(2,\R)$ is a Fuchsian representation and $m_{irr}: \PSL(2,\R) \to \SO_0(2,3)$ is the irreducible representation. The developing map $\dev_\rho$ constructed in Section \ref{ss:EinsteinHitchin} is a diffeomorphism onto $\Omega^1_\rho$.
\end{lem}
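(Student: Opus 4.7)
The plan is to exploit the $\PSL(2,\R)$-symmetry present in the Fuchsian case. Since $\rho = m_{irr}\circ j$, the uniqueness of the equivariant maximal surface (Theorem~\ref{t:UniquenessMaximalSurfacePrecise}) forces the map $u:\widetilde\Sigma\to\H^{2,2}$ to be equivariant under the full group $m_{irr}(\PSL(2,\R))$, and consequently so is the developing map $\dev_\rho:T^1\widetilde\Sigma\to\Ein^{1,2}$ constructed in Section~\ref{ss:EinsteinHitchin}. After identifying $T^1\widetilde\Sigma\cong T^1\H^2\cong\PSL(2,\R)$ via its simply transitive action (relative to a base point $z_0$), the developing map takes the form
\[\dev_\rho(g\cdot z_0) = m_{irr}(g)\cdot p_0, \qquad p_0 := \dev_\rho(z_0)\in\Ein^{1,2}.\]

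Since $\dev_\rho$ is a local diffeomorphism between $3$-manifolds (Section~\ref{ss:EinsteinHitchin}), its image $\mathcal{O} = m_{irr}(\PSL(2,\R))\cdot p_0$ is an open $3$-dimensional $\PSL(2,\R)$-orbit in $\Ein^{1,2}$, and the stabilizer $H\subset\PSL(2,\R)$ of $p_0$ is discrete. I would next classify the $\PSL(2,\R)$-orbits on $\Ein^{1,2}$ via $m_{irr}$, using the identification $\R^{2,3}\cong\Sym^4\R^2$: the rational normal quartic $C = \xi_1(\partial_\infty\Gamma) = \{[v^4]\}$ is the unique closed $1$-dimensional orbit, its tangent developable is a $2$-dimensional orbit closure which coincides with $K_\rho^1$ (since the osculating photon $\xi_2([v])$ is precisely the span of $v^4$ and $v^3\cdot$), and the complement $\Omega_\rho^1 = \Ein^{1,2}\setminus K_\rho^1$ is a single open $3$-dimensional orbit on which $\PSL(2,\R)$ acts freely. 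The last assertion may either be checked directly, or deduced from Guichard--Wienhard's parametrization of $\Omega_\rho^1$ by positively oriented triples in $\partial_\infty\H^2\cong\ProjR{1}$, which exhibits $\Omega_\rho^1$ as a principal $\PSL(2,\R)$-homogeneous space.

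Granting this orbit picture, the lemma reduces to showing $p_0\in\Omega_\rho^1$: for then $\mathcal{O}$ is an open subset of the single free $\PSL(2,\R)$-orbit $\Omega_\rho^1$, forcing $\mathcal{O}=\Omega_\rho^1$ and $H=\{e\}$, so $\dev_\rho$ is a bijective local diffeomorphism, hence a diffeomorphism onto $\Omega_\rho^1$. I would prove $p_0\in\Omega_\rho^1$ by contradiction: if $p_0\in K_\rho^1$, then $p_0\subset\xi_2(y)$ for some $y\in\partial_\infty\Gamma$, meaning the space-like geodesic in $\H^{2,2}$ from $u(x_0)$ in direction $du_{x_0}(v_0)$ approaches the quartic $C$ at $\xi_1(y)$ tangentially along the osculating photon $\xi_2(y)$. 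Translating this back into Higgs bundle language, such tangential behavior would force a degeneracy of the second fundamental form of $u$ at $x_0$ which is incompatible with the nowhere-vanishing Higgs component $\beta_0$ of the Hitchin representation $m_{irr}\circ j$; the non-vanishing of $\beta_0$ reflects that $\rho$ does not lie in the Fuchsian locus (Proposition~\ref{p:Fuchsian Locus n=2}).

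The main obstacle is this final step, verifying that $p_0\in\Omega_\rho^1$, as this requires a concrete computation in $\Sym^4\R^2$ linking the geometry of the maximal surface to the osculating flag structure of the Veronese quartic. A cleaner shortcut would be to match $\dev_\rho$ with the Guichard--Wienhard developing map at a single value, whereupon $\PSL(2,\R)$-equivariance and uniqueness of the open orbit do the rest.
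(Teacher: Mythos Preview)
Your approach is essentially correct and genuinely different from the paper's. The paper works entirely by hand in the model $\R^{2,3}\cong\R_4[X,Y]$: it shows (i) the image of $\dev_\rho$ consists of quartics $PQ$ with $P$ positive definite, which never have a triple root, (ii) distinct tangent $3$-planes to the maximal surface meet only at the non-isotropic point $[PQ]$, and (iii) every isotropic quartic without a triple root is divisible by a positive definite quadratic. Your argument replaces all three computations by the single structural observation that $\dev_\rho$ is $\PSL(2,\R)$-equivariant and $\Omega_\rho^1$ is the unique open $\PSL(2,\R)$-orbit.

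Two points deserve tightening. First, your deduction of full $\PSL(2,\R)$-equivariance of $u$ from Theorem~\ref{t:UniquenessMaximalSurfacePrecise} is not quite right as stated: for $g\in\PSL(2,\R)$, the translate $m_{irr}(g)\cdot u(\widetilde\Sigma)$ is invariant under the \emph{conjugate} $m_{irr}(g)\rho(\Gamma)m_{irr}(g)^{-1}$, not under $\rho(\Gamma)$, so the uniqueness theorem does not apply directly. The clean fix is the one the paper uses implicitly: exhibit the explicit $\PSL(2,\R)$-equivariant maximal surface $[P]\mapsto[P^2]$ and invoke uniqueness once to identify it with $u(\widetilde\Sigma)$.

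Second, your ``main obstacle'' is no obstacle at all. Once you know $\dev_\rho$ is a $\PSL(2,\R)$-equivariant local diffeomorphism and that $\Omega_\rho^1$ is the unique open orbit in $\Ein^{1,2}$ (via the triple parametrization you cite), the orbit of $p_0$ is open and hence must equal $\Omega_\rho^1$; there is nothing further to check, and the vague contradiction argument via $\beta_0$ can be dropped entirely. With these two clarifications your proof is complete and more conceptual than the paper's, at the modest price of importing the Guichard--Wienhard triple parametrization (or an equivalent orbit count in $\Sym^4\R^2$).
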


Lemma \ref{l:GWStructureFuchsianCase} shows in particular that for $\rho_0 = m_{irr} \circ j$, the manifold $\rho_0(\Gamma) \backslash \Omega^1_{\rho_0}$ is homeomorphic to $T^1 \Sigma$. Now, when $\rho$ varies continuously, the topology of $\rho(\Gamma) \backslash \Omega^1_\rho$ does not vary, and its Einstein structure varies continuously by \cite[Theorem 9.2]{wienhardanosov}. Therefore, the developing map $\dev_\rho$ constructed in the proof of Theorem \ref{t:EinsteinHitchin} and the identification of $T^1 \Sigma$ with $\rho(\Gamma) \backslash \Omega^1_\rho$ discussed above
each give Einstein structures on $T^1\Sigma$ with the same holonomy $\rho$ and depend continuously on $\rho$. Since the two Einstein structures coincide at $\rho_0 = m_{irr}\circ j$, they coincide on the whole connected component of $\rho_0$ according to the Ehresmann--Thurston principle. This concludes the proof of Theorem \ref{t:EquivalenceEinsteinGW}.

\begin{proof}[Proof of Lemma \ref{l:GWStructureFuchsianCase}]

A special property of the Fuchsian case is that the developing map extends as a $\PSL(2,\R)$-equivariant map from $T^1 \H^2$ to $\Ein^{1,2}$.

Let us recall that the irreducible representation of $\SL(2,\R)$ in dimension $n+1$ is given by the action of $\SL(2,\R)$ on the space $\R_n[X,Y]$ of homogeneous polynomials of degree $n$ in two variables $X$ and $Y$. This action preserves the bilinear form $Q_n$ given by the $n^{th}$-symmetric product of the volume form on $\R^2.$ In the coordinate  coordinate system 
\[(X^n, X^{n-1}Y,  \ldots , XY^{n-1} , Y^n),\] 
a computation shows that bilinear form $Q_n$ is given by 
\[\left(\begin{matrix}
&&&& a_{n,0} \\
&&&-a_{n,1}&\\
&&\iddots&&\\
&(-1)^{n-1} a_{n,n-1}\\
(-1)^n a_{n,n}
\end{matrix}\right)\]
where $a_{n,k} = \frac{k! (n-k)!}{n!}$.

This bilinear form is anti-symmetric for $n$ odd and symmetric of signature $(2k, 2k+1)$ for $n=4k$. In particular, for $n=2$, the quadratic form $-2 Q_2$ is the discriminant of quadratic polynomials, and this representation gives the isomorphism $\PSL(2,\R) \simeq \SO_0(2,1)$. The hyperbolic plane $\H^2$ thus identifies with the projectivization of the set of quadratic polynomials with negative discriminant (that is, scalar products on $\R^2$) while $\partial_\infty \H^2$ identifies with the projectivization of the set of quadratic polynomials with vanishing discriminant (that is, squares of linear forms).

Let $j: \Gamma \to \PSL(2,\R)$ be a Fuchsian representation. We identify $j$ with its composition with the isomorphism $\PSL(2,\R) \simeq \SO_0(2,1)$. Now, $\R^{2,3}$ identifies with $\left(\R_4[X,Y], - Q_4\right)$, and the irreducible representation described above is the representation $m_{irr}$. 

In this setting, the boundary map $\xi_1: \partial_\infty \Gamma \to \Ein^{1,2}$ given by the Anosov property of $\rho_0$ is identified with the $\PSL(2,\R)$-equivariant map
\[\function{\xi_1}{\partial_\infty \H^2}{\Ein^{1,2}}{[L^2]}{[L^4]~.}\]
(Here, $[L^2]$ denotes the projective class of the square of a linear form on $\R^2$.)

Moreover, given a point $[L^2]$ in $\partial_\infty{\H^2}$, the photon $\xi_2([L^2])$ is the tangent to $\xi_1$ at $L^4$. It is thus the projectivization of the space of polynomials of the form $L^3 L'$, where $L'$ is a linear form. We conclude that the domain $\Omega^1_{\rho_0}$ of Guichard and Wienhard is the complement in $\Ein^{1,2}$ of the set of polynomials having a triple root.

On the other side, the  $\rho_0$-invariant maximal surface in $\H^{2,2}$ is the image of the $\PSL(2,\R)$-equivariant map 
\[\function	{f}{\H^2}{\H^{2,2}}{[P]}{[P^2]~,}\]
referred to as the \emph{Veronese surface} in \cite{IshiharaVeronese}.
(Here, $[P]$ denotes the projective class of a positive definite quadratic form on $\R^2$.) 

Let $P$ be a positive definite quadratic form on $\R^2$. The tangent space to this maximal surface at the point $f([P])$ is the projective space of polynomials of the form $PQ$, with $Q\in \R_2[X,Y]$. Since none of these polynomials has a triple root, the intersection of this tangent space with $\Ein^{1,2}$ is contained in the domain $\Omega^1_{\rho_0}$. By construction of the developing map $\dev_{\rho_0}$ it follows that $\dev_{\rho_0}$ takes values in $\Omega^1_{\rho_0}$.

Let $[P]$ and $[Q]$ be two distinct points in $\H^2$. Then the intersection between the tangent spaces to $f(\H^2)$ at $f([P])$ and $f([Q])$ is the point $[PQ]$, which never belongs to $\Ein^{1,2}$. Indeed, up to applying an element of $\PSL(2,\R)$, one can assume that $[P] = [X^2+Y^2]$ and $[Q] = [aX^2 + bY^2]$. One easily compute that
\[Q_4(PQ) = \frac{1}{6}(a+b)^2 + 2ab~,\]
which never vanishes when $a$ and $b$ are of the same sign. By construction, it follows that $\dev_{\rho_0}$ is injective.

Let us finally prove that $\dev_{\rho_0}$ maps surjectively onto $\Omega^1_{\rho_0}$. Let $P$ be a non-zero polynomial of degree $4$ such $Q_4(P) = 0$. Suppose that $[P]$ is not in the image of $\dev_{\rho_0}$. Then $P$ is not divisible by a positive definite quadratic form and $P$ thus splits as a product of $4$ linear forms. If all these linear forms are co-linear, then $[P]$ belongs to the image of $\xi_1$ and thus not to $\Omega^1_{\rho_0}$. Otherwise, one can assume (up to applying an element of $\PSL(2,\R)$) that $P$ has the form
\[XY(aX+bY)(cX+dY)~.\]
One then computes that 
\[Q_4(P) = \frac{1}{6}\left((ad)^2 + (bc)^2 -adbc\right)~.\]
Since the polynomial $A^2 +B^2 - AB$ is positive definite, the fact that $Q_4(P)$ vanishes implies that both $ad$ and $bc$ vanish, from which we easily deduce that $P$ is divisible by $X^3$ or $Y^3$. Therefore, $P$ belongs to the complement of $\Omega^1_{\rho_0}$. 

By contraposition, we deduce that, if $[P]$ belongs to $\Omega^1_{\rho_0}$, then $P$ is divisible by a positive definite quadratic form. Therefore, the developing map $\dev_\rho$ maps surjectively onto $\Omega^1_{\rho_0}$. This concludes the proof of Lemma \ref{l:GWStructureFuchsianCase}.
\end{proof}

\appendix

\section{The Ahlfors--Schwarz--Pick lemma}

The key argument in our length spectrum comparison result (Theorem \ref{t:LengthSpectrumComparison}) is a version of the so-called Ahlfors--Schwarz--Pick lemma. Roughly, this lemma states that an inequality between the curvatures of two conformal metrics implies a comparison between the metrics. A common reference for this fact is Wolpert's paper \cite{Wolpert82}. However, Wolpert adds the unnecessary assumption that both metric be negatively curved. Moreover, he does not discuss the equality case. It thus seemed useful to include here a proof of the following:

\begin{theo} \label{t:CurvatureIneq=>MetricIneq}
Let $\Sigma$ be a closed Riemann surface and let $g$ and $h$ be two conformal metrics of class $\mathcal{C}^2$ on $\Sigma$ with respective Gauss curvature $\kappa(g)$ and $\kappa(h)$. Assume that $\kappa(g)$ is negative. If $\kappa(h) \geq \kappa(g)$, then either $h = g$ everywhere, of there exists $\lambda >1$ such that $h \geq \lambda g$.
\end{theo}

The proof will follow from applying a maximum principle to the ratio of the two metrics, which satisfies an elliptic equation involving the curvatures of $g$ and $h$. 

If $z = x+ i y$ is a local conformal coordinate on $\Sigma$ we denote by $\Delta_0$ the Laplace operator in this coordinate, namely $\Delta_0 = \frac{1}{2}\left(\frac{\partial^2}{\partial x^2}+ \frac{\partial^2}{\partial y^2}\right)$. If we write $g = e^{\sigma} \vert d z \vert^2$, then the operator $\Delta_g = e^{-\sigma} \Delta_0$ is the Laplace--Beltrami operator of the metric $g$ (in particular, it is independent of the coordinate). Moreover, we have
\[\kappa(g) = - \Delta_g \sigma~.\]

\begin{lem} \label{l:ConformalChangeCurvature}
Let $u:\Sigma \to \R$ be the function of class $\mathcal{C}^2$ such that $h = e^{u} g$. Then we have
\[\Delta_g u = \kappa(g) - e^u \kappa(h)~.\]
\end{lem}

\begin{proof}
In a local coordinate $z$ such that $g = e^\sigma \vert d z\vert^2$, we have $h = e^{\sigma + u} \vert d z\vert^2$ and 
\begin{eqnarray*}
\kappa(h) &=& -e^{-(\sigma + u)}\Delta_0(\sigma + u) \\
&=& - e^{-u} \Delta_g \sigma - e^{-u} \Delta_g u \\
&=&e^{-u} \kappa(g) - e^{-u} \Delta_g u~.
\end{eqnarray*}
Multiplying by $e^u$, we get
\[e^u \kappa(h) = \kappa(g) - \Delta_g u~.\]
\end{proof}

The weak inequality between $h$ and $g$ will easily follow from applying a maximum principle to the function $u$. To obtain a strict inequality, we will need the following strong version of the maximum principle that dates back to Picard. The proof we give here is based on notes of Sweers \cite{NotesStrongMaxPrinciple}. These notes include references about the history of this result.

\begin{lem} \label{l:StrongMaxPrinciple}
Let $u$ be a function of class $\mathcal{C}^2$ on a connected open set $U\subset \C$. Assume that $u\geq 0$ and that there exists a constant $K>0$ so that $\Delta_0 u \leq K u$. Then either $u$ vanishes identically, or $u>0$ everywhere.
\end{lem}

\begin{proof}
Let us prove that the set $W$ where $u$ vanishes is open in $U$. Since it is obviously closed, it will be either empty or the whole domain $U$.

Assume by contradiction that $W$ is not open. Then we can find a point $p$ in $U$, a small radius $r>0$ and a point $q$ at distance $r$ from $p$ such that $\overline{B}(p,r)\subset U$ and $W\cap \overline{B}(p,r)= \{q\}$. Indeed, let $a$ be a point in $W$ which is not in the interior of $W$. Let $r_0$ be such that $B(a,r_0) \subset U$. Let $b$ be a point in $B(a,\frac{r_0}{2})$ such that $u(b) >0$. Set $r_1 = d(b,W)$. Then $0 < r_1 < \frac{r_0}{2}$ and thus $\overline{B}(b,r_1) \subset B(b,\frac{r_0}{2}) \subset B(a,r_0) \subset U$. Moreover, $\overline{B}(b,r_1)$ intersects $W$ on its boundary. Take $q$ a point in $\overline{B}(b,r_1)\cap W$, $p = \frac{b+q}{2}$ and $r = \frac{r_1}{2}$. Then $p$, $q$ and $r$ satisfy the required hypotheses.

Let $r'\in (0,r)$ be such that $\overline{B}(q,r')\subset U$. Define $f_\alpha(z) = e^{\alpha r^2}- e^{\alpha |z-p|^2}$. Note that $f_\alpha$ is positive outside the ball of radius $r$ centered at $p$. A simple computation shows that
\[\Delta_0 f_\alpha = (4\alpha - 4\alpha^2 |z-p|^2)e^{-\alpha |z-p|^2}~,\]
and one verifies that, for $\alpha$ large enough, $\Delta_0 f_\alpha(z) < K f_\alpha(z)$ for $z\in \overline{B}(q,r')$. Choose such an $\alpha$. Let $S(q,r')$ denote the circle of and radius $r'$ about $q$. Then, by construction of $p$, $q$ and $r$, $u$ is positive on $\overline{B}(p,r)\cap S(q,r')$, and so is $u+\epsilon f_\alpha$ for $\epsilon >0$ small enough. Choose such an $\epsilon$. 

Since $u\geq 0$ and $f_\alpha >0$ on $U \backslash \overline{B}(p,r)$, we actually have $u + \epsilon f_\alpha >0$ on $S(q,r')$. Moreover, we have 
\[\Delta_0 (u+ \epsilon f_\alpha) < K (u+ \epsilon f_\alpha)\]
on $\overline{B}(q,r')$. Let $m$ be a point in $\overline{B}(q,r')$ such that
\[u(m)+ \epsilon f_\alpha(m) = \inf \{ u(x) +\epsilon f_\alpha(x),~x\in \overline{B}(q,r')\}.\]
If $m$ belongs to $\overset{\circ}{B}(q,r')$, then 
\[u(m) + \epsilon f_\alpha(m) > \frac{1}{K} \Delta_0 (u+ \epsilon f_\alpha)(m) \geq 0~.\]
If $m$ belongs to $S(q,r')$, then $u(m) + \epsilon f_\alpha(m)>0$. In any case, we obtain that $u+\epsilon f_\alpha >0$. This contradicts the fact that
\[u(q)+ \epsilon f_\alpha(q) = u(q) = 0~.\]

In conclusion $W$ is both closed and open, hence it is either empty or the whole domain $U$.
\end{proof}

We can now prove Theorem \ref{t:CurvatureIneq=>MetricIneq}.

\begin{proof}[Proof of Theorem \ref{t:CurvatureIneq=>MetricIneq}]
Let $u: \Sigma \to \R$ be the $\mathcal{C}^2$ function such that $h= e^u g$. Let $m\in \Sigma$ be a point at which $u$ achieves its minimum. We then have
\[\Delta_g u (m) \geq 0~,\]
and therefore
\[\kappa(g)(m) - e^{u(m)} \kappa(h)(m) \geq 0\]
by Lemma \ref{l:ConformalChangeCurvature}. We thus have
\[e^{-u(m)} \kappa(g)(m) \geq \kappa(h)(m) \geq \kappa(g)(m) \quad \textrm{(using $\kappa(h) \geq \kappa(g)$).}\]
Finally, since $\kappa(g) <0$, we deduce that $e^{-u(m)} \leq 1$. Hence $u(m)\geq 0$ and thus $u\geq 0$ on $\Sigma$.

If $u$ is identically $0$, then $h=g$. Otherwise, let us prove that $u$ is positive everywhere. By compactness of $\Sigma$ we have $u\geq a$ of some $a >0$ and $h\geq e^{a} g$. Assume by contradiction that $u$ vanishes somewhere but not identically. Then one can find a point $x\in \Sigma$ such that $u(x)=0$ but $u$ does not vanish identically on any neighborhood of $x$. Let $z$ be a local holomorphic coordinate defined in a compact connected neighborhood $V$ of $x$, and write $g = e^\sigma |d z|^2$. In the coordinate $z$, we have
\begin{eqnarray*}
\Delta_0 u &=& e^{\sigma}\left(\kappa(g) - e^u \kappa(h)\right) \\
&\leq & e^\sigma \kappa(g) (1-e^u) \\
& \leq & K u
\end{eqnarray*}
for some constant $K>0$ (depending on $V$). Thus Lemma \ref{l:StrongMaxPrinciple} applies, contradicting the fact that $u$ does not vanish identically on $V$. 
\end{proof}

\bibliographystyle{alpha}
\bibliography{biblio.bib}
\end{document}